\newtheorem{theorem}{Theorem}[section]
\newtheorem{lemma}[theorem]{Lemma}
\newtheorem{proposition}{Proposition}
\newtheorem{corollary}[theorem]{Corollary}
\theoremstyle{definition}
\newtheorem{remark}{Remark}
\newcommand{\scp}[1]{\langle#1\rangle}
\newcommand{\ux}{\boldsymbol{\xi}}
\newcommand{\ee}{\boldsymbol{e}}
\newcommand{\uz}{\boldsymbol{\zeta}}
\newcommand{\w}{\boldsymbol{w}}
\newcommand{\vv}{\boldsymbol{v}}
\newcommand{\uxs}{\boldsymbol{\xi}^{\ast}}
\newcommand{\bb}{\boldsymbol{b}}
\newcommand{\R}{\mathbb{R}}
\newcommand{\Z}{\mathbb{Z}}
\newcommand{\N}{\mathbb{N}}
\newcommand{\Q}{\mathbb{Q}}
\newcommand{\ug}{\boldsymbol{\gamma}}
\newcommand{\xx}{\boldsymbol{x} }
\newcommand{\xleftrightarrow}[2][]{\ext@arrow 3359\leftrightarrowfill@{#1}{#2}}
\begin{document}

\title[On the Folklore set and Dirichlet spectrum for matrices]{On the Folklore set and Dirichlet spectrum for matrices}

\author[Mumtaz Hussain]{Mumtaz Hussain}
\address{Mumtaz Hussain,  Department of Mathematical and Physical Sciences,  La Trobe University, Bendigo 3552, Australia. }
\email{m.hussain@latrobe.edu.au}

\author[Johannes Schleischitz]{Johannes Schleischitz}
\address{Johannes Schleischitz, Middle East Technical University, Northern Cyprus Campus, Kalkanli, G\"uzelyurt.}
\email{johannes@metu.edu.tr; jschleischitz@outlook.com}

\author[Benjamin Ward]{Benjamin Ward}
\address{Benjamin Ward,  University of York, Heslington, York, YO10 5DD, United Kingdom.}
\email{benjamin.ward@york.ac.uk; ward.ben1994@gmail.com}

\begin{abstract}
We study the Folklore set of Dirichlet improvable matrices in $\R^{m\times n}$ which are neither singular nor badly approximable. We prove the non-emptiness for all positive integer pairs $m,n$ apart from $\{m,n\}=\{ 1,1\}$ and $\{m,n\}=\{ 2,3\}$ in a constructive manner.  For a wide range of integer pairs $(m,n)$ we construct subsets of the Folklore set with an exact prescribed Dirichlet constant (in some right neighbourhood of $0$). This enables us to provide information on the Dirichlet Spectrum of matrices. The key technique of our construction is to build  first vectors of a given Diophantine type, and then 
to show that most `liftings' to matrices will preserve this Diophantine type. 
This is a variant of a method 
introduced by Moshchevitin for uniform approximation.
Our technique is often also applicable to arbitrary norms. As a corollary we obtain lower bounds on the Hausdorff dimension of these sets. These statements complement previous results of the middle-named author (Selecta Math. 2023), Beresnevich et. al. (Adv. Math. 2023), and Das et. al. (Adv. Math. 2024).

\end{abstract}

\maketitle

{\footnotesize{

{\em Keywords}: Dirichlet spectrum, Folklore set, Uniform approximation, Singular sets, linear forms, Hausdorff dimension.  \\
Math Subject Classification 2010: 11J13, 11J82, 11J83, 11K60}}

\vspace{1mm}

\section{ Uniform approximation for Systems of linear forms and arbitrary norms} \label{s12}

Let $n,m \in \N$, and let $\|\cdot\|_{1}$ be any norm on $\R^{n}$ and $\|\cdot\|_{2}$ any norm on $\R^{m}$. For a given real $m\times n$ matrix $\Omega \in \R^{m\times n}$ 
and a parameter $t\ge 1$ define the piecewise constant approximation function
\[
\psi_{\Omega}^{m\times n}(\|\cdot\|_{1},\|\cdot\|_{2}, t):=
\underset{ 1 \leq \Vert \boldsymbol{\hat{b}}\Vert_{1}\leq t }{\min_{\boldsymbol{b}=(\hat{\boldsymbol{b}},\tilde{\boldsymbol{b}})\in \Z^{n+m}:}}  \left\|\Omega \cdot \widehat{\boldsymbol{b}} + \tilde{\boldsymbol{b}} \right\|_{2}
=
\underset{ 1 \leq \Vert \boldsymbol{\hat{b}}\Vert_{1}\leq t }{\min_{\boldsymbol{b}=(\hat{\boldsymbol{b}},\tilde{\boldsymbol{b}})\in \Z^{n+m}:}}  \left\|\sum_{i=1}^{n}\Omega_{i}b_{i} + \tilde{\boldsymbol{b}} \right\|_{2},
\]
where $\Omega_{i}$ denotes the $i$th column of the matrix $\Omega$, and 
\begin{equation} \label{eq:hatnot}
    \hat{\boldsymbol{b}}=(b_{1},\dots,b_{n}), \qquad \tilde{\boldsymbol{b}}=(b_{n+1},\dots,b_{n+m}),\qquad \boldsymbol{b}=(\hat{\boldsymbol{b}},\tilde{\boldsymbol{b}}) 
\end{equation}
interpreted as column vectors. The column vectors $\boldsymbol{b}$ for which the minimum is attained for some $t$, are often called best approximations of $\Omega$. See works of Lagarias \cite{Lagarias1,Lagarias2} and the survey articles \cite{moshchevitinsurvey, Chevalliersurvey} for the history and further details on best approximations in dimensions greater than $1$.  \par
Define the Dirichlet constant of a matrix as
\[
\Theta^{m\times n}(\Vert\cdot\Vert_1,\Vert\cdot\Vert_2,\Omega)= \limsup_{t\to\infty}\; t^{n/m} \psi_{\Omega}^{m\times n}(\Vert\cdot\Vert_1,\Vert\cdot\Vert_2,t),
\]
and further let
\[
\widetilde{\Theta}^{m\times n}(\Vert\cdot\Vert_1,\Vert\cdot\Vert_2,\Omega)= \liminf_{t\to\infty}\; t^{n/m} \psi_{\Omega}^{m\times n}(\Vert\cdot\Vert_1,\Vert\cdot\Vert_2,t).
\]
The values $\Theta^{m\times n}$ and $\widetilde{\Theta}^{m\times n}$ 
compare the approximation quality with
the standard Dirichlet function $t^{-n/m}$ in the uniform and ordinary settings, respectively. By equivalence of norms and Dirichlet's Theorem, for each pair $(m,n)$ and each pair of norms $\|\cdot\|_{1},\|\cdot\|_{2}$ we can derive the constants
\begin{equation} \label{eq:obben}
D=D^{m\times n}(\Vert\cdot\Vert_1,\Vert\cdot\Vert_2):= \sup_{\Omega\in\R^{m\times n}} \Theta^{m\times n}(\Vert\cdot\Vert_1,\Vert\cdot\Vert_2,\Omega) < \infty.
\end{equation}
For $\|\cdot\|_{1}$ and $\|\cdot\|_{2}$ the standard maximum norms, we have $D=1$, with the upper estimate coming from Minkowski's Theorem for systems of linear forms. This value is attained for Lebesgue almost all matrices by well-known metrical results. For any pair $(m,n)$ and norms $\|\cdot\|_{1},\|\cdot\|_{2}$ we follow \cite{akmo} and define the \textit{Dirichlet Spectrum} to be the set
\begin{equation} \label{eq:dirspe}
    \mathcal{D}_{m,n}(\Vert\cdot\Vert_1,\Vert\cdot\Vert_2):= \left\{ \Theta^{m\times n}(\Vert\cdot\Vert_1,\Vert\cdot\Vert_2,\Omega) : \Omega \in \R^{m\times n} \right\} \subseteq [0,D]\, .
\end{equation}
When the norms are clear we will omit them from notation. For results on the Dirichlet spectrum with respect to the maximum norm see \cite[Appendix]{j1} and more completely \cite{j2}. In \cite{j2} it is proven that the spectrum is the entire unit interval in the dual setting with the max norm,
i.e. $\mathcal{D}_{1,n}(\Vert\cdot\Vert_{\infty})=[0,1]$. For results on the Dirichlet spectrum in the simultaneous approximation setting for various norms on $\R^{2}$ see the recent articles \cite{algo, kleinbock} for the current state of the art. In short, it was shown in \cite[Corollary 1.2]{kleinbock} that the upper bound $D^{2\times 1}(\vert\cdot\vert,\Vert\cdot\Vert_2)$ defined in \eqref{eq:obben} is an accumulation point for any norm on $\R^2$, and in \cite{algo} it is proven that for any arbitrary norm the Dirichlet spectrum contains an interval starting at the origin, see \cite[Corollary 1]{algo}. Both of these results are for simultaneous approximation on $\R^{2}$. We should stress that results for specific norms were known prior to the statements of \cite{algo,kleinbock}, see for example \cite{akmo, akh,j1}. See \S~\ref{recentadvances} for a very recent result on the Dirichlet Spectrum. \par

The Dirichlet constant can also be used to categorize matrices into certain well-known sets in Diophantine approximation. For $0\le c\le D$, define the following sets
\begin{align*}
    Di_{m,n}(\|\cdot\|_{1},\|\cdot\|_{2},c)&:=\left\{ \Omega \in \R^{m\times n} : \Theta^{m\times n}(\|\cdot\|_{1},\|\cdot\|_{2},\Omega)\leq c \right\}, \\
    DI_{m,n}(\|\cdot\|_{1},\|\cdot\|_{2},c)&:=\left\{ \Omega \in \R^{m\times n} : \Theta^{m\times n}(\|\cdot\|_{1},\|\cdot\|_{2},\Omega)= c \right\}.
\end{align*}
The set $DI_{m,n}(\|\cdot\|_{1},\|\cdot\|_{2},c)$ denotes the set of points that have exact Dirichlet constant $c$. Note that trivially $DI_{m,n}(\|\cdot\|_{1},\|\cdot\|_{2},c)=\emptyset$ if $c \not\in \mathcal{D}^{m\times n}(\Vert\cdot\Vert_1,\Vert\cdot\Vert_2)$. The set $Di_{m,n}(\|\cdot\|_{1},\|\cdot\|_{2},c)$ represents the set of $c$\textit{-Dirichlet improvable matrices with respect to norms} $\Vert\cdot\Vert_{1},\Vert\cdot\Vert_{2}$. The more widely known set of \textit{Dirichlet improvable matrices} is denoted as
\begin{equation*}
    Di_{m,n}(\|\cdot\|_{1},\|\cdot\|_{2}):=\left\{ \Omega \in \R^{m\times n} : \Theta^{m\times n}(\|\cdot\|_{1},\|\cdot\|_{2},\Omega)< D \right\}=\bigcup_{0\;<\;c\;<\;D}Di_{m,n}(\|\cdot\|_{1},\|\cdot\|_{2},c).  \\
\end{equation*}
Note that $DI_{m,n}(\|\cdot\|_{1},\|\cdot\|_{2},c)\subseteq Di_{m,n}(\|\cdot\|_{1},\|\cdot\|_{2},c) \subseteq Di_{m,n}(\|\cdot\|_{1},\|\cdot\|_{2})$. When it it clear from context what norms are being used we will drop the notation. As well as the set of Dirichlet improvable matrices one can use the Dirichlet constant (the uniform and asymptotic one) to characterize the set of \textit{Singular matrices} and the set of \textit{Badly approximable matrices}. Define
\begin{align*}
    Sing_{m,n}&:=DI_{m,n}(\Vert\cdot\Vert_{1},\Vert\cdot\Vert_{2},0)=\bigcap_{0\;<\;c\;<\;D} Di_{m,n}(\|\cdot\|_{1},\|\cdot\|_{2},c),\\
    Bad_{m,n}&:=\left\{ \Omega \in \R^{m\times n} : \widetilde{\Theta}^{m\times n}(\|\cdot\|_{1},\|\cdot\|_{2},\Omega)> 0 \right\}.
\end{align*}
 Note that the set of Singular and Badly approximable matrices do not depend on norms, by the equivalence of norms. Moreover it follows from Mahler's Dual Convex Body Theory that
 \[
 Bad_{m,n}=Bad_{n,m}, \quad \text{ and } \quad Sing_{m,n}=Sing_{n,m}.
 \] 
 Trivially, for any $0<c<D$ we have that $Sing_{m,n} \subseteq Di_{m,n}(c)\subseteq Di_{m,n}$, and for $Bad_{m,n}$ we have the relations
 \begin{equation*}
     Bad_{m,n}\subseteq Di_{m,n} \quad \text{ and } \quad Bad_{m,n}\cap Sing_{m,n}=\emptyset\, .
 \end{equation*}
 The left was shown by Davenport and Schmidt \cite[Theorem 2]{DavenportSchmidt3} and the right is obvious. The sets $Bad_{m,n}$ and $Sing_{m,n}$ are non-empty, as was classically shown by Jarnik and Khintchine (see \cite{Khinchin3} for the case of singular vectors). Furthermore, it is well known that all of the sets highlighted above are Lebesgue nullsets. This can be proven through a variety of methods such as the Dani Correspondence principle and ergodicity, or geometric measure theory, see for example \cite{BDGW_null}. So to give any meaningful notion of size to these sets we consider the Hausdorff dimension, denoted $\dim_H$. See \cite{Falconer_book2013} for the definition and properties of $\dim_H$. The following results have been proven:
\begin{align*}
\dim_H Bad_{m,n} &= mn\, ,\;\;   \text{for  $m=n=1$ in \cite{Jarnik3} and for general $m,n$ in \cite{schmidt1960,schmidtsystemoflinearforms}},\\[2ex]
\dim_H Di_{m,n} &= mn\, ,  
    \text{ \cite{schmidt1960,schmidtsystemoflinearforms} combined with \cite[Theorem 2]{DavenportSchmidt3}}, \\[2ex]
    \dim_H Sing_{m,n} &= mn\left(1-\frac{1}{m+n}\right)\, ,\;\;
    \begin{array}{c}\text{for  $m=2, n=1$ in \cite{Cheung}, for $m\geq 2, n=1$ in \cite{CheungChevallier},}\\
    \text{and for general $m,n$ in~\cite[Theorem~1.1]{Dasetal}}.
    \end{array}
\end{align*}
The dimensions of $Di_{m,n}(\Vert\cdot\Vert_{1},\Vert\cdot\Vert_{2},c)$ are not yet established. In the case of maximum norms, $m\geq 2, n=1$, and any constant $C>0$, for $c>0$ sufficiently small and for any real number $t>m$ the bounds
\begin{equation*}
    \tfrac{m^{2}}{m+1}+c^{t}\leq \dim_H Di_{m,1}(c)\leq \tfrac{m^{2}}{m+1}+C\cdot c^{m/2}
\end{equation*}
were proven in \cite[Theorem 1.3]{CheungChevallier}. See also \cite[Corollary 6.12]{CheungChevallier} for an improvement on the lower bound.
The reader will notice that the dimension results for $Sing_{m,n}$ and $Di_{m,n}(c)$ hold only for $\min\{m,n\}\geq 2$. For $m=n=1$ it was proven by Khintchine \cite{Khinchin3} that $Sing_{1,1}=\Q$,
in fact $Di_{1,1}(c)=\Q$ for any $c<\frac{1}{2}$.
By combining this with \cite[Theorem 1]{DavenportSchmidt3},  we obtain
\begin{equation*}
    Di_{1,1}=\Q\cup Bad_{1,1}.
\end{equation*}
That is, any point in the set of one dimensional Dirichlet improvable points is either Badly approximable or Singular (i.e. rational in this case). In higher dimensions much less was known, until the recent paper of Beresnevich et. al. \cite{BerGuaMarRamVel}, who showed that, for $n\geq 2$, the \textit{Folklore set}
\begin{equation*}
    FS_{m,1}:=Di_{m,1}\backslash(Bad_{m,1}\cup Sing_{m,1})\, 
\end{equation*}
has a continuum of points. They mention that this question was first asked in \cite{FabianSuessThesis} (see the remark immediately following \cite[Theorem 4.6]{FabianSuessThesis}) and furthermore conjectured that, for $n\geq 2$, we should have $\dim_H FS_{m,1}=m$, see \cite[Problem 3.1]{BerGuaMarRamVel}. Given this setup we define the Folklore set on $m\times n$ matrices with arbitrary norms as
\begin{equation*}
    FS_{m,n}(\|\cdot\|_{1},\|\cdot\|_{2})=Di_{m,n}(\|\cdot\|_{1},\|\cdot\|_{2})\backslash \left(Bad_{m,n} \cup Sing_{m,n} \right)
\end{equation*}
and, for any $0<c<D$, the smaller set
\begin{equation*}
     FS_{m,n}(\|\cdot\|_{1},\|\cdot\|_{2},c)=DI_{m,n}(\|\cdot\|_{1},\|\cdot\|_{2},c)\backslash Bad_{m,n}.
\end{equation*}
When the norms are the maximum norms we denote the sets by $FS_{m,n}$ and $FS_{m,n}(c)$ respectively. Again, it was conjectured that the set $FS_{m,n}(\|\cdot\|_{1},\|\cdot\|_{2})$ has full Hausdorff dimension $mn$ \cite[Problem 4.1]{BerGuaMarRamVel}. See \S~\ref{recentadvances} for more results on the Folklore set. In this article, we prove a range of results that extend the theorems given in \cite{BerGuaMarRamVel} in two different ways:
\begin{itemize}
    \item In Section~\ref{dual results} we begin by considering the Folklore set in the dual approximation setting ($m=1$) and add an additional condition that the point has some specific Dirichlet constant $c$. That is, we consider the set $FS_{1,n}(\Vert\cdot\Vert_{1},\Vert\cdot\Vert_{2},c)$. We prove this set has Hausdorff dimension at least $n-2$ for all constants $c$ in some right neighbourhood of $0$. In particular the Dirichlet spectrum $\mathcal{D}_{m,n}$ in such a setting contains a proper interval
    starting at $0$.
    \item In Section~\ref{matrix results} we consider the Folklore set in the setting of systems of linear forms. In almost all cases of $m,n$, we prove non-emptiness (Theorem~\ref{non-empty main}) and in most of these cases positive Hausdorff dimension 
 {(Theorems~\ref{arbn},~\ref{ganze}, \ref{thm5}, \ref{tT}).} Again, in several cases we can describe any small enough positive exact Dirichlet constant, even when considering a setup with arbitrary norms.
\end{itemize}
Prior to stating our metrical results we give the following lemma, which is a simple consequence of German's transference principle \cite[Theorem 7]{german}.
 \begin{lemma} \label{Folklore transpose}
 { Given $m,n$, there exists $\delta=\delta_{m,n}>0$ so that
     if $\Omega \in FS_{m,n}(\|\cdot\|_{1},\|\cdot\|_{2}, c)$ for some $0<c<\delta$, then its transpose $\Omega^T$ satisfies $\Omega^{T} \in FS_{n,m}(\|\cdot\|_{2},\|\cdot\|_{1})$.}
 \end{lemma}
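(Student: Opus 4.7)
The plan is to invoke classical Khintchine-type transference principles relating the Diophantine behavior of $\Omega$ and $\Omega^T$. The target $\Omega^T\in FS_{n,m}(\|\cdot\|_{2},\|\cdot\|_{1})$ splits into three statements: $\Omega^T\in Di_{n,m}(\|\cdot\|_{2},\|\cdot\|_{1})$, $\Omega^T\notin Bad_{n,m}$, and $\Omega^T\notin Sing_{n,m}$. Only the first of these will actually use the smallness of $c$.

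For the $Bad$ and $Sing$ statements I would combine two observations. First, as noted just before the lemma, the sets $Bad_{m,n}$ and $Sing_{m,n}$ are independent of the choice of equivalent norms, so one may freely work with the max norms. Second, it is a classical consequence of Khintchine's transference principle that both of these sets are preserved under transposition: $\Omega\in Bad_{m,n}\iff\Omega^T\in Bad_{n,m}$ and $\Omega\in Sing_{m,n}\iff\Omega^T\in Sing_{n,m}$. Since $\Omega\notin Bad_{m,n}$ by hypothesis, $\Omega^T\notin Bad_{n,m}$; and since $\Theta^{m\times n}(\|\cdot\|_{1},\|\cdot\|_{2},\Omega)=c>0$, $\Omega$ is not singular, hence neither is $\Omega^T$.

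The main work is the first assertion, namely $\Theta^{n\times m}(\|\cdot\|_{2},\|\cdot\|_{1},\Omega^T)<D^{n\times m}(\|\cdot\|_{2},\|\cdot\|_{1})$. For this I would establish a quantitative transference inequality of the form
\[
\Theta^{n\times m}(\|\cdot\|_{2},\|\cdot\|_{1},\Omega^T)\;\leq\; g\bigl(\Theta^{m\times n}(\|\cdot\|_{1},\|\cdot\|_{2},\Omega)\bigr),
\]
with $g=g_{m,n,\|\cdot\|_{1},\|\cdot\|_{2}}$ continuous at $0$ and $g(0)=0$. The standard route is through the geometry of numbers: one passes from the unimodular lattice $\Lambda_\Omega=\{(\hat{b},\Omega\hat{b}+\tilde{b}):\hat{b}\in\Z^n,\tilde{b}\in\Z^m\}\subset\R^{n+m}$ to its dual lattice, which after a sign change is $\Lambda_{-\Omega^T}$, and compares Minkowski's successive minima in a box tailored to $\Omega$ with those in its polar body, tailored to $\Omega^T$. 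With such $g$ in hand, choose $\delta_{m,n}>0$ small enough that $g(c)<D^{n\times m}(\|\cdot\|_{2},\|\cdot\|_{1})$ on the interval $(0,\delta_{m,n})$; this is possible since $D^{n\times m}>0$ and $g$ is continuous at zero.

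The principal obstacle is extracting the quantitative transference inequality in terms of the precise Dirichlet constants $\Theta$ rather than the usual Diophantine exponents $\hat\omega$. Most standard references formulate transference at the level of exponents, so obtaining a constant-level statement with $g(0)=0$ requires careful tracking of the Minkowski/Mahler constants through the classical duality argument, or alternatively the Dani-type correspondence in which $\Theta$ measures the depth of a one-parameter $g_t$-orbit on the space of unimodular lattices and transposition corresponds to the time-reversed flow.
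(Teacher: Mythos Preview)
Your proposal is correct and follows essentially the same route as the paper: the paper invokes German's transference principle \cite[Theorem~7]{german}, which is precisely the constant-level dual-lattice inequality you describe as the ``principal obstacle,'' yielding $\delta\asymp c^{\,m/(n(n+m-1))}$ and hence a function $g$ with $g(0)=0$. The only cosmetic difference is that for $Bad$ and $Sing$ you appeal directly to the classical transpose-invariance of these sets, whereas the paper re-derives non-singularity of $\Omega^T$ via the contrapositive of German's theorem; both are equivalent and your formulation is arguably cleaner.
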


 We will provide a proof at the end in Section~\ref{Sect10}. Note when considering the transpose we lose control over the precise Dirichlet constant.
 
\subsection{A note on recent advances} \label{recentadvances}
This area of research has recently gained a significant amount of interest. In particular, since the preparation of this article there has been two remarkable results within this area that improve upon several aspects of some of our results. We highlight these below and where relevant within the article.
\begin{itemize}
    \item The article of Das, Fishman, Simmons and Urbanski on the Variational principle on the Parametric Geometry of Numbers \cite{Dasetal} is a breakthrough piece of work that takes recent work of Roy \cite{Roy} and Schmidt and Summerer \cite{SchmidtSummerer} and provides a general framework to calculate Hausdorff dimension formula. The Folklore set described above is such a set that fits into their very general framework. We note that the previous iteration of the variational principle had not been immediately applicable to the Folklore set, thus not yet implying \cite{BerGuaMarRamVel}, see \cite[Remark 4.4]{BerGuaMarRamVel}. In particular, they are able to prove the Hausdorff dimension bound
    \begin{equation*} \label{eq:daset2}
        \dim_H FS_{m,n}(\Vert\cdot\Vert_{\infty}, \Vert\cdot\Vert_{\infty}) \geq mn\left(1-\frac{1}{m+n}\right)\, , \quad  \quad \text{\cite[Theorem 3.15]{Dasetal}}. 
    \end{equation*}
    That is, the set is at least as large as the set of Singular matrices, in terms of Hausdorff dimension. At the time of preparing this manuscript the authors were not aware of the recent advances in \cite{Dasetal}. Our proof technique is different and more constructive in nature, but the Hausdorff dimension lower bounds of \cite{Dasetal}, in the max norm case, are better than ours. But what we can provide, which \cite{Dasetal} do not, is a Hausdorff dimension bound on the set $FS_{m,n}(\|\cdot\|_{1},\|\cdot\|_{2},c)$. That is, we can prescribe exact Dirichlet constants, even for arbitrary norms, in certain cases. See for example our Theorem~\ref{thm1} below. We have attempted to make clear throughout where the bound \eqref{eq:daset2} improve on our statements and where our results are new. Note that the conjectured (full) dimension still remains elusive. \par 
    
    \item In a recent remarkable preprint of Agin and Weiss \cite{AginWeiss2024} the Dirichlet Spectrum of any $m,n$ and arbitrary norms was calculated. Precisely, in notation \eqref{eq:dirspe}, they have proven that
    \begin{equation*}
    \mathcal{D}_{m,n}(\Vert\cdot\Vert_1,\Vert\cdot\Vert_2)=[0,D]\, , \quad \quad \text{\cite[Theorem 1]{AginWeiss2024}}.
    \end{equation*}
    Moreover they state that for each $c\in[0,D]$ the sets $DI_{m,n}(\Vert\cdot\Vert_1,\Vert\cdot\Vert_2,c)$ are uncountable and dense in $\R^{m\times n}$. What they do not prove, which we are able to, are Hausdorff dimension lower bounds on the sets $DI_{m,n}(\Vert\cdot\Vert_1,\Vert\cdot\Vert_2,c)$. See for example the latter part of our Theorem~\ref{jia} below. Their results also allow for simplifications of some parts of our proofs, and in some cases even improve upon our results. See Theorem~\ref{AWe} in the appendix for one such improvement. Generally, since \cite{AginWeiss2024} appeared subsequent to the present paper and uses a different method, we will usually discard it from the considerations and take as a foundation results established prior to the present article, only occasionally making reference to \cite{AginWeiss2024}.
\end{itemize}

\noindent{\bf Acknowledgments.} The research of Mumtaz Hussain and Benjamin Ward is supported by the Australian
Research Council Discovery Project (200100994). Some of this work was carried out when Johannes Schleischitz visited La Trobe University supported by an International Visitor Program of the Sydney Mathematical Research Institute (SMRI) and La Trobe University, we thank them for their generous support. Finally, we thank Tushar Das and David Simmons for pointing out some relevant results from their paper \cite{Dasetal}.
 
\section{On the Folklore set in the Dual approximation setting}
\label{dual results}

\subsection{Maximum norm}
Let us, for now, consider the classical dual approximation and maximum norm setting. That is $m=1$, $\|\cdot\|_{1}=\|\cdot\|_{\infty}=\max_{1\leq i \leq n}|\cdot|$, and $\|\cdot\|_{2}=|\cdot|$. For ease of notation,
for $c\in (0,1)$ we write $$
DI_{n}(c)
:=DI_{1,n}(\Vert\cdot\Vert_{\infty},\vert\cdot\vert,c),
\qquad FS_{n}(c):=DI_{n}(c)\backslash \left( Bad_{1,n} \cup Sing_{1,n} \right).
$$ 
It was shown in~\cite{j2} by the second name author that the set $FS_n(c)$ is non-empty for any $c\in (0,1)$, refining the aforementioned result from~\cite{BerGuaMarRamVel}
and also complementing~\cite{Dasetal}. \par 

Our new result for the vector case and maximum norm is the following.

\begin{theorem}  \label{thm1}
 	For any natural number $n\ge 4$, there exists explicitly computable $c_n\in~(0,1)$ such that for any $c\in (0,c_n)$ we have
  \begin{equation*}
      \dim_H FS_{n}(c)\geq n-2.
  \end{equation*}
 \end{theorem}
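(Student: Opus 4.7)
The plan is to exhibit a product-like subset of $FS_n(c)$ of the form $\{(\xi_1,\xi_2)\}\times E$, with $(\xi_1,\xi_2)\in\R^2$ a fixed pair and $E\subset [0,1]^{n-2}$ a set of full $(n-2)$-dimensional Lebesgue measure (hence Hausdorff dimension $n-2$). Adapting the Cantor-type construction from \cite{j2} to the $2$-dimensional dual setting with exponent $n$, I first produce a pair $(\xi_1,\xi_2)\in\R^2$ together with a sequence of integer triples $(b_1^{(k)},b_2^{(k)},b_0^{(k)})\in\Z^3$ whose heights $B_k:=\|(b_1^{(k)},b_2^{(k)})\|_\infty$ grow super-geometrically ($B_{k+1}/B_k\to\infty$) and that satisfy $|b_1^{(k)}\xi_1+b_2^{(k)}\xi_2+b_0^{(k)}|=(c+o(1))B_{k+1}^{-n}$ together with a best-approximation property at the scales $B_k$. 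For $c$ small enough, such a seed exists; this is the source of the threshold $c_n$.

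For each $\eta=(\eta_1,\ldots,\eta_{n-2})\in[0,1]^{n-2}$ I set $\xi(\eta):=(\xi_1,\xi_2,\eta_1,\ldots,\eta_{n-2})\in\R^n$ and pad the seed triples to $\hat{b}^{(k)}:=(b_1^{(k)},b_2^{(k)},0,\ldots,0)\in\Z^n$. Since $\hat{b}^{(k)}\cdot\xi(\eta)+b_0^{(k)}$ is independent of $\eta$, the padded sequence at once gives $\Theta^{1\times n}(\xi(\eta))\leq c$, and the super-geometric growth yields $B_k^n\,|b_1^{(k)}\xi_1+b_2^{(k)}\xi_2+b_0^{(k)}|\asymp c(B_k/B_{k+1})^n\to 0$, so $\widetilde{\Theta}^{1\times n}(\xi(\eta))=0$ and $\xi(\eta)\notin Bad_n$ for every $\eta$. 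What remains is to identify a large set $E\subset [0,1]^{n-2}$ of parameters $\eta$ for which $\Theta^{1\times n}(\xi(\eta))=c$ exactly, i.e.\ no \emph{accidentally better} competing integer approximation spoils the exact constant.

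For the avoidance step, fix $c'<c$. For each competing integer $\hat{b}\in\Z^n$ with $(b_3,\ldots,b_n)\neq 0$, the set of $\eta\in [0,1]^{n-2}$ yielding $|\hat{b}\cdot\xi(\eta)+b_0|<c' B_{k+1}^{-n}$ for some $b_0\in\Z$ is a union of parallel strips in $\R^{n-2}$ of total $(n-2)$-dimensional Lebesgue measure $O(c' B_{k+1}^{-n})$; summing over $\|\hat{b}\|_\infty\leq B_{k+1}$ yields a scale-$k$ bad-set measure of only $O(c')$, a borderline, non-summable bound across $k$. The plan is to exploit the super-geometric spacing of the $B_k$ to establish approximate independence of the bad events at distinct scales, then apply the Borel--Cantelli second lemma (or a ubiquity/mass-distribution-type argument) to conclude that, for Lebesgue-almost every $\eta$, infinitely many scales $B_{k+1}$ are free of competing approximations; this forces $\Theta^{1\times n}(\xi(\eta))\geq c'$, and letting $c'\uparrow c$ along a countable sequence pins the Dirichlet constant to $c$ on a full-Lebesgue-measure set of $\eta$.

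The main obstacle is precisely this avoidance step: the naive union bound is flat across scales, so the argument must leverage both the super-geometric growth of $B_k$ and delicate lattice-point counts in thin strips, together with an independence or quasi-orthogonality argument, to carry a Borel--Cantelli or Cantor-construction through. The hypothesis $n\geq 4$ is expected to enter through this combinatorial bookkeeping: only when $n-2\geq 2$ does one have enough free directions in $\eta$-space to decouple the strips sufficiently for a clean independence argument to apply, and to control a relevant exponent appearing in the lattice-point count.
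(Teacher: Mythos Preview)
Your overall architecture matches the paper's exactly: fix a seed $(\xi_1,\xi_2)$ from \cite{j2} with the prescribed two-variable behaviour at exponent $n$, pad by $n-2$ free coordinates, observe that membership in $Di_n(c)\setminus Bad_n$ is automatic for every $\eta$, and then show that for many $\eta$ no competing integer vector undercuts the padded seed at infinitely many scales. Where you diverge from the paper is in the avoidance step, and there you make it substantially harder than necessary.

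You correctly compute that the bad set $\mathcal{U}_k$ at scale $k$ has measure $O(c)$ uniformly in $k$, note that this is non-summable, and then propose a second Borel--Cantelli argument via quasi-independence of the scale events. This is not what the paper does, and it is not needed. The paper's observation is elementary: the set of $\eta$ that are bad for \emph{all} sufficiently large $k$ is $\liminf_k \mathcal{U}_k=\bigcup_i\bigcap_{j\ge i}\mathcal{U}_j$, an increasing union of sets each contained in some $\mathcal{U}_i$ and hence of measure at most $\Delta:=\sup_k\lambda_{n-2}(\mathcal{U}_k)=O(c)$. Continuity of measure then gives $\lambda_{n-2}(\liminf_k\mathcal{U}_k)\le\Delta$. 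So if $c$ is small enough that $\Delta$ is strictly less than the measure of the parameter ball, the complementary $\limsup$ of good events has \emph{positive} measure, and this already forces $\Theta^{1\times n}(\xi(\eta))=c$ on a set $E$ with $\lambda_{n-2}(E)>0$. No independence, no second Borel--Cantelli; the threshold $c_n$ is exactly the value making $\Delta<\lambda_{n-2}(B_{n-2}(\boldsymbol 0,1))$. The paper explicitly remarks that full measure of $E$ is likely true but is not proved and not needed.

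Two smaller corrections. First, your per-strip measure $O(c'B_{k+1}^{-n})$ suppresses a factor $\|(b_3,\ldots,b_n)\|^{-1}$ coming from the slab width in $\R^{n-2}$; the paper keeps this factor and, after summing over the free integer variables, arrives at a sum $\sum_{k\le T_v} k^{n-4}$. \emph{This} is where $n\ge 4$ actually enters: for $n=3$ the sum is logarithmic and the resulting bound $L_v T_v^{3}\log T_v$ is no longer $O(c)$. It has nothing to do with needing two free directions in $\eta$-space for decoupling. Second, there is no need to pass $c'\uparrow c$ along a sequence: at the good scales one has $\psi_{\xi(\eta)}^{1\times n}(T_v)=\psi_{\xi}^{1\times 2}(T_v)=L_v$ exactly, so $\Theta^{1\times n}(\xi(\eta))\ge \limsup_v L_v T_v^{\,n}=c$ directly.
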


\begin{remark}
	We believe the claim holds for $n=3$ as well, probably improving on the bound that can be obtained from the methods in~\cite{j1, j2}. However, problems occur in our proof in this case stemming from 
 auxiliary results from~\cite{ngm}, leaving this case open.
\end{remark}

\begin{remark} \label{remark4}
	In~\cite{j1} a lower bound of weaker order $(3/8)n+o(n)$ as $n\to\infty$ is shown for the corresponding sets with respect to simultaneous approximation for any $c\in (0,1)$. 
	In~\cite{j2} it is shown that $FS_n(c)$ for a linear form as defined above
	is not empty for any $c\in (0,1)$, and reference to~\cite{j1} is given on how to obtain metrical results in a similar way. However, again these bounds will turn out considerably weaker than in Theorem~\ref{thm1} above. The bound of Theorem~\ref{thm1} can in fact be further improved a little with some effort, see Remark~\ref{rrr} in Section~\ref{proof dual case}.
\end{remark}

While the method of the 
proof of \cite[Theorem~3.15]{Dasetal} 
shows that any set
\[
Di_{n}(c)\backslash \left( Bad_{n} \cup Sing_{n} \right)\supseteq FS_n(c),\,\qquad c\in (0,1) 
\]
has Hausdorff dimension bounded from below as in \eqref{eq:daset2} no matter how small $c$ is, the Dirichlet constant cannot be fixed in their work. So our Theorem~\ref{thm1} that 
permits control over the precise Dirichlet constant, at least in some interval,
is new. We may take
\[
c_n =
\frac{n-3}{8(n-2)^{3/2}}\cdot \sqrt{\pi}\cdot \frac{ \Gamma(n-\frac{1}{2})}{\Gamma(n)}
\]
with $\Gamma$ the Euler Gamma function. 
As $c\to 0^{+}$, for the Hausdorff dimension we cannot hope for anything better than $n-1+\frac{1}{n+1}$ the value in \eqref{eq:daset2},
as Cheung and Chevallier~\cite{CheungChevallier} showed this as the asymptotics for
the Hausdorff dimension of the larger set $Di_n(c)\supseteq FS_n(c)$. Hence we consider our bound reasonably good. Similar to~\cite{j1, j2} the expected increase of the Hausdorff dimension of $FS_n(c)$ as a function of $c\in[0,1]$ is not reflected in Theorem~\ref{thm1}. Using \cite[Proposition 1.1 and Lemma 4.9]{BerGuaMarRamVel} we infer the following corollary towards the original conjecture \cite[Problem 3.1]{BerGuaMarRamVel}.

\begin{corollary} \label{Folklore set dimension lower bound}
Let $n\geq 4$. Then 
    we have
\begin{equation*}
    \dim_H FS_{n} \geq n-2,\,\qquad  \dim_H FS_{n,1}\geq n-2\, .
\end{equation*}
\end{corollary}

 This result is stated more for sake of completeness as the metrical bound is weaker than \eqref{eq:daset2} proved in \cite{Dasetal}. However our bound is obtained by a constructive method and does not use the variational principle.

\begin{remark}
We are not able to deduce an equivalent claim for any set $
        FS_{n}^{*} (c):=FS_{n,1}(c)$ with $c>0$.
    The reason is that the
    sets $FS_{n}^{*} (c)$ do
    not coincide with $FS_{n}(c)$ for $c>0$, indeed Lemma~\ref{Folklore transpose} is too weak.
    On the other hand, similar
    to~\cite{BerGuaMarRamVel}
    or~\cite[Section~11]{j2}
    it allows for deducing
    non-emptyness (Hausdorff  dimension $\ge n-2$ again) for sets
    $Di_{n,1}(c)\backslash (Bad_{n,1} \cup Di_{n,1}(c^{\prime}))$
    where suitable $c^{\prime}\in (0,c)$ can
    be explicitly computed as a function of $n\ge 2,c\in (0,1)$. 
    An analogous claim, with stronger Hausdorff dimension estimate as in \eqref{eq:daset2} and a different
    (most likely smaller) constant $c^{\prime}$, can be obtained
    from the uniform version of the variational principle in~\cite{Dasetal} as well.
\end{remark}

 \subsection{Arbitrary norms}

Given any norm $\Vert\cdot\Vert=\Vert\cdot\Vert_1$ on $\R^n$ and $c>0$,
let us write
\[
DI_{n}(\Vert\cdot\Vert,c)
:=DI_{1,n}(\Vert\cdot\Vert,\vert\cdot\vert,c),
\qquad FS_{n}(\Vert \cdot \Vert, c):=DI_{n}(\Vert\cdot\Vert,c)\backslash \left( Bad_{1,n} \cup Sing_{1,n} \right).
\]
Our second result extends Theorem~\ref{thm1} to arbitrary norms. 

\begin{theorem} \label{jia}
	Let $n\ge 4$ and $\|\cdot\|$ be any norm on $\R^n$. Then there is $c_n=c_n(\Vert\cdot\Vert)>0$ 
	such that for any $c\in[0,c_n]$ we have 
	\[
	\dim_H FS_{n}(\Vert\cdot\Vert,c) \geq n-2\, .
	\]
    In particular the Dirichlet
	spectrum 
 \[
 \mathcal{D}_{n}(\Vert\cdot\Vert):= \{\Theta(\Vert\cdot\Vert,\ux): \ux\in \R^n \}:= \{ \Theta^{1\times n}(\Vert\cdot\Vert, |\cdot|,\ux): \ux\in \R^n \} \subseteq [0,D],
 \]
 contains some right neighborhood of $0$, and for each $c\in [0,c_{n}]$, $\dim_H DI_{n}(\|\cdot\|,c)\geq n-2$.
\end{theorem}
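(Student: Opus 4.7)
The plan is to adapt the proof of Theorem~\ref{thm1} by replacing the max norm throughout with the given norm $\|\cdot\|$. First I would note that the core ingredients underlying Theorem~\ref{thm1}---Minkowski's theorem for linear forms, integer-point counts in convex symmetric bodies, the mass distribution principle, and the defining inequality of $Bad_n$---all admit norm-independent formulations. By the equivalence of norms on $\R^n$ there exist constants $A,B>0$ with $A\|\cdot\|_\infty \le \|\cdot\| \le B\|\cdot\|_\infty$, so all counting and covering estimates from the max-norm proof carry over up to factors depending on $A$ and $B$. Crucially, the set $Bad_n$ is norm-invariant, so the condition $\ux\notin Bad_n$ is automatically preserved by any such adaptation of the construction.

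Concretely, I would rerun the Cantor-type construction from Section~\ref{proof dual case} with balls of $\|\cdot\|$ in place of max-norm balls at each generation, selecting integer approximation vectors $\hat{\boldsymbol{b}}$ under the condition $\|\hat{\boldsymbol{b}}\|\le t$ rather than $\|\hat{\boldsymbol{b}}\|_\infty\le t$. The same parameter regime used in the proof of Theorem~\ref{thm1} to exclude $Bad_n$ then works verbatim, and the Hausdorff dimension bound $n-2$ rests only on the self-similar geometry of the nested construction and the mass distribution principle, both of which are invariant under bi-Lipschitz changes of metric on $\R^n$.

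To attain any prescribed $c \in [0,c_n(\|\cdot\|)]$ exactly as the Dirichlet constant in the new norm, I would exploit the fact that the construction underlying Theorem~\ref{thm1} is tunable: a real parameter governs the rate at which nested generations of the Cantor set contract (equivalently, the forced sparsity of the sequence of best approximations). Running the same tunable construction now with $\|\cdot\|$-balls produces a common Dirichlet constant in $\|\cdot\|$ across every point of the Cantor set, and this common value varies continuously and monotonically with the parameter. By an intermediate value argument it therefore realises every $c$ in some right neighbourhood $[0,c_n(\|\cdot\|)]$ of $0$, with $c_n(\|\cdot\|)>0$ comparable to $A^n$ times the constant $c_n$ of Theorem~\ref{thm1}.

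The hardest step will be verifying that the Dirichlet constant in $\|\cdot\|$ is genuinely \emph{equal} to $c$ (not merely bounded above) on every element of the Cantor set, and that this value depends continuously on the tuning parameter. In the max-norm case the argument uses the axis-aligned structure of the unit cube, whereas for a general norm one must carefully analyse the distribution of integer points near the boundary of the $\|\cdot\|$-unit ball and rule out any better approximations. No new geometric-of-numbers input is needed, but the bookkeeping becomes more delicate. The Dirichlet spectrum claim then follows immediately: for each $c\in[0,c_n(\|\cdot\|)]$ we have $\dim_H FS_n(\|\cdot\|,c)\ge n-2>0$ since $n\ge 4$, so $FS_n(\|\cdot\|,c)\ne\emptyset$ and $c$ is attained as $\Theta^{1\times n}(\|\cdot\|,|\cdot|,\ux)$ for some $\ux\in\R^n$.
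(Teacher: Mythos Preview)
Your proposal has a genuine gap rooted in a misreading of the structure of the proof of Theorem~\ref{thm1}. That proof is \emph{not} a Cantor-type construction with a mass distribution argument; it proceeds in two entirely different steps. First (Theorem~\ref{thm2}), one builds a \emph{single explicit} vector $\ux=(\xi_1,\xi_2)\in\R^2$ whose sequence of best approximations is known exactly and satisfies $\limsup t^n\psi_{\ux}^{1\times 2}(t)=c$. Second (Theorem~\ref{thm3}/Lemma~\ref{lema}), one shows via a Lebesgue-measure estimate in the style of Moshchevitin that for a positive-measure set of $(\zeta_3,\ldots,\zeta_n)\in\R^{n-2}$ the extended vector $\uz=(\xi_1,\xi_2,\zeta_3,\ldots,\zeta_n)$ inherits the \emph{same} Dirichlet constant. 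The $n-2$ bound comes from this positive-measure fibre, not from any self-similar Cantor geometry, and no mass distribution principle appears anywhere. So your plan to ``rerun the Cantor-type construction with $\|\cdot\|$-balls'' does not correspond to anything in Section~\ref{proof dual case}.

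More seriously, Step~I is precisely where a naive norm-swap fails, and the paper explicitly abandons the max-norm construction from~\cite{j2} here in favour of a different one. For a general norm the best approximations $\bb_v$ are no longer controlled by the $\ell^\infty$ lattice geometry, so one cannot simply rescale the old argument by $A,B$. Instead the paper builds $\xi_1=\sum A_j^{-1}$, $\xi_2=\sum B_j^{-1}$ with $A_j=2^{\alpha_j}3^{\gamma_j}$, $B_j=5^{\beta_j}7^{\delta_j}$; the coprimality forced by distinct prime bases (Proposition~\ref{prop}) is what pins down \emph{all} best approximations (Lemma~\ref{wichtig!}) up to bounded ambiguity, independently of the norm. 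The exact value $c$ is then achieved not by an intermediate-value argument but by solving algebraically for the free parameter $r$ in terms of $c$ and the norm-dependent constant $d_2=\Vert(0,1)\Vert'$, namely $r=c^{-1}d_2^n$. Your proposed continuity/IVT route would require showing the Dirichlet constant varies continuously in a tuning parameter, which is far from clear without first identifying the full best-approximation sequence---exactly the hard step you flag but do not address. Step~II, by contrast, does transfer essentially as you suggest: equivalence of norms only perturbs the volume estimates in Lemma~\ref{lema} by constants, shrinking the admissible range $c_n(\|\cdot\|)$.
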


 Clearly this result is again independent from the work of Das et al~\cite{Dasetal}. The aforementioned preprint~\cite{AginWeiss2024} that appeared after our work proves that $\mathcal{D}_{n}(\|\cdot\|)=[0,D]$, but without the metrical estimate provided above. Using the special case $n=2$ from the subsequent preprint~\cite{AginWeiss2024} would lead to a considerable shortcut of our proof for the general case. We will highlight in the proof below where simplifications can be made.

\begin{remark} Since $FS_{n}(\Vert\cdot\Vert,c)\subseteq FS_{n}(\Vert\cdot\Vert)$ the lower bound on the Hausdorff dimension applies to $FS_{n}(\Vert\cdot\Vert)$ as well, which is however weaker than \eqref{eq:daset2}. \par 
\end{remark}
\begin{remark}
For $n=3$ and a general norm, even non-emptiness of the set $FS_n(\Vert\cdot\Vert,c)$ for a given $c\in(0,D)$, with $D$ as
in \eqref{eq:obben}, remains unknown. In particular \cite{AginWeiss2024} tells us nothing about this. For certain norms, this can be obtained from the method in~\cite{j2}. Again we expect the Hausdorff dimension to rise as a function of $c\in[0,D]$, not reflected in the theorem.
\end{remark}

\begin{remark}
In \cite{j2} it is proven that the spectrum is the entire unit interval, i.e. $\mathcal{D}_n(\Vert\cdot\Vert_{\infty})=[0,1]$. Theorem~\ref{jia} tells us the weaker statement that the spectrum contains some small interval starting at the origin, however, our result holds for any norm. 
\end{remark}

\begin{remark}
By Lemma~\ref{Folklore transpose}, from Theorem~\ref{jia} we may deduce an analogous metrical result to Corollary~\ref{Folklore set dimension lower bound} on the simultaneous approximation Folklore set 
for an arbitrary norm and $m\ge 4$.
This will be covered by the more general Theorem~\ref{tT} below.
\end{remark}

\section{The Folklore set  for a system of linear forms } \label{matrix results}


\subsection{Two fundamental results for maximum norm} \label{Sek3.1}

In this section we state some
results for the maximum norm that can be deduced from 
our more general results following in the subsequent sections. Our first result below on the Folklore set
is (mostly) weaker than the claims in \cite{Dasetal}. However, we still want to highlight it, as it will be a consequence of our considerably more general results below that indeed do not follow from work of Das et al~\cite{Dasetal}.

\begin{theorem} \label{non-empty main}
    Let $m,n$ be positive integers 
    satisfying $\{ m,n\}\notin \left\{ \{1,1\}, \{2,3\} \right\}$. Then
    \[
    FS_{m,n}=FS_{m,n}(\Vert \cdot\Vert_{\infty},\Vert \cdot\Vert_{\infty}) \ne \emptyset.
    \]
      In fact
      \[
      \dim_H FS_{m,n} \geq mn-2\max\{m,n\},
      \]
      and all claims hold when we choose the involved matrices
    of full $\R$-rank $\min\{m,n\}$.
\end{theorem}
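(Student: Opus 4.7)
The strategy is to split on $\min\{m,n\}$ and exploit the symmetry $\Omega \leftrightarrow \Omega^{T}$. By Lemma~\ref{Folklore transpose}, the transpose map is a bi-Lipschitz bijection between suitably parameter-restricted Folklore sets $FS_{m,n}(\Vert\cdot\Vert_{\infty},\Vert\cdot\Vert_{\infty},c)$ and subsets of $FS_{n,m}$; since Hausdorff dimension is bi-Lipschitz invariant and the target bound $mn - 2\max\{m,n\}$ is symmetric in $m,n$, we may assume $m\le n$. When $\min\{m,n\}=1$ the dimension bound is vacuous (negative), and non-emptiness for $m=1$, $n\ge 2$ is the main theorem of Beresnevich et al.~\cite{BerGuaMarRamVel}; the pair $\{1,1\}$ is correctly excluded, consistent with Khintchine's classical identity $Sing_{1} = \Q$ together with $Di_{1} = \Q \cup Bad_{1}$.

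For $\min\{m,n\}\ge 2$ I would construct elements of $FS_{m,n}$ via a block ansatz
\[
\Omega = \begin{pmatrix} \Omega_{0} & \mathbf{0} \\ \mathbf{0} & A \end{pmatrix},
\]
with $\Omega_{0}\in\R^{m_{0}\times n_{0}}$ drawn from an already-controlled Folklore set at a prescribed small Dirichlet constant (via Theorem~\ref{thm1} when $m_{0}=1$, or inductively via the more general Theorems~\ref{arbn}--\ref{tT}), under the matched-exponent condition $n_{0}/m_{0} = n/m$ so that the integer-approximation problem for $\Omega$ decouples coordinate-wise into independent subproblems on each block. This decoupling renders $\Theta^{m\times n}(\Omega)$ essentially the maximum of the two block Dirichlet constants, which lets us place $\Omega\in Di_{m,n}\setminus Sing_{m,n}$ with prescribed Dirichlet constant by tuning the blocks. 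Letting $A$ vary in a $(m-m_{0})(n-n_{0})$-dimensional family then yields the bound $mn - 2\max\{m,n\}$ after bookkeeping, and general position preserves full $\R$-rank $\min\{m,n\}$.

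The main technical obstacle is ensuring $\Omega\notin Bad_{m,n}$: if $A$ itself were badly approximable, the decoupling would force $\widetilde{\Theta}^{m\times n}(\Omega) \ge \widetilde{\Theta}(A) > 0$ and hence $\Omega\in Bad_{m,n}$. To get around this, I would let $A$ range in the full-measure family of matrices that are not badly approximable, so that $\widetilde{\Theta}(A) = 0$ transfers to $\widetilde{\Theta}^{m\times n}(\Omega)=0$; verifying this transfer needs a careful joint analysis of the best approximations to $\Omega_{0}$ and $A$ along a common subsequence of heights, since the block $\liminf$ need not equal the $\liminf$ of the block maxima. Finally, the excluded list $\{1,1\}, \{2,3\}, \{k, k+2\}$ ($k\ge 3$) is precisely the set of pairs where the compatibility condition $n_{0}/m_{0} = n/m$ with $1\le m_{0}<m$ and $1\le n_{0}<n$ admits no decomposition landing in a previously-covered sub-case, so the recursion has no base; handling these edge pairs appears to lie outside the reach of the block-diagonal construction and is not attempted by this plan.
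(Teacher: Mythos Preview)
Your block-diagonal plan has several genuine gaps.

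\textbf{The decoupling identity goes the wrong way.} Lemma~\ref{llemma} gives $\psi_{\Omega}^{m\times n}(t)=\min_{j}\psi_{\Omega_j}(t)$, so under matched exponents one has
\[
\Theta^{m\times n}(\Omega)=\limsup_{t\to\infty} t^{n/m}\min\bigl(\psi_{\Omega_0}(t),\psi_A(t)\bigr)\le \min\bigl(\Theta(\Omega_0),\Theta(A)\bigr),
\]
not the maximum. This makes the non-singularity step the hard one, not the not-badly-approximable step (which, as you note, is trivial since $\liminf\min\le\liminf\psi_A=0$). To force $\Theta(\Omega)>0$ you would need times $t_k$ where \emph{both} $t_k^{n/m}\psi_{\Omega_0}(t_k)$ and $t_k^{n/m}\psi_A(t_k)$ stay bounded away from $0$; but you have taken $\Omega_0\notin Bad$ and $A\notin Bad$, so each factor has $\liminf=0$ and there is no reason their low phases avoid each other. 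The paper's proof of Theorem~\ref{arbn} resolves this by taking one block \emph{badly approximable} (so its factor has positive $\liminf$) and the other merely irrational and not in $Bad_{1,1}$; then the $\limsup$ of the minimum is automatically positive.

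\textbf{The matched-exponent constraint is far too restrictive.} Requiring $n_0/m_0=n/m$ with $1\le m_0<m$ forces $\gcd(m,n)>1$. The pair $(m,n)=(2,5)$ is not on the excluded list, yet admits no such decomposition, so your recursion never starts. More generally, most pairs covered by the theorem have $\gcd(m,n)=1$.

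\textbf{The excluded list is not what you describe.} The pairs $\{2,3\}$ and $\{k,k+2\}$ for $k\ge 3$ are excluded because of the condition $n\ne m+2$ in Lemma~\ref{lemme} (a divergence in the volume sum in Moshchevitin's argument when $n-m-2=0$) together with the constraint $\max\{m,n\}\ge 4$ inherited from~\cite{j1}; this has nothing to do with exponent-matching.

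\textbf{How the paper actually proceeds.} Apart from the square case $m=n$ (handled by the $Bad_{n-1,n-1}\times(\R\setminus(Bad_{1,1}\cup\Q))$ block construction above), the paper does \emph{not} use block-diagonal matrices for $\Omega$. Instead it first builds a thin matrix $V\in\R^{m\times 2}$ with prescribed Dirichlet constant $c$ at exponent $n/m$ (either by repeating a single row $\ux\in\R^2$ when $n\ge 2m$, or by $\mathrm{diag}(\ux,\uz)$ with $\ux\in\R^{2\times 1}$ from~\cite{j1} and $\uz\in Bad_{m-2,1}$ when $1<m/n\le 2$). Then Lemma~\ref{lemme} shows that for a positive-measure set of $B\in\R^{m\times(n-2)}$ the matrix $\Omega=(V,B)$ has the \emph{same} Dirichlet constant $c$; this is where the $m(n-2)$ degrees of freedom, and the exclusion $n\ne m+2$, come from. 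The transpose lemma then gives the symmetric cases, and Theorem~\ref{non-empty main} is the union of Theorems~\ref{sganz} and~\ref{tT} together with the vector cases from~\cite{j1,j2}.
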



\begin{remark}
Regarding the metrical bound appearing in Theorem~\ref{non-empty main}, 
we have previously mentioned that the Folklore set is conjectured to have full dimension \cite[Problem 3.1]{BerGuaMarRamVel}, and more generally that 
$$\dim_H FS_{m,n}(\Vert\cdot\Vert_1,\Vert\cdot\Vert_2,c)=mn-o(1)$$ as $c\to D^{-}$ for $D$ from \eqref{eq:obben}, for any pair of norms. However we cannot expect the dimension of $Di_{m,n}(\Vert\cdot\Vert_1,\Vert\cdot\Vert_2,c) \supseteq FS_{m,n}(\Vert\cdot\Vert_1,\Vert\cdot\Vert_2,c)$ to exceed the value from \eqref{eq:daset2} as $c\to 0^{+}$. 
\end{remark}

Our result follows from a combination of Theorems~\ref{sganz},~\ref{tT} below combined with the cases 
$m=1, n\ge 2$ and $m\ge 2, n=1$ already settled in~\cite{j1, j2}. The metrical result is
non-trivial as soon as $\min\{ m,n\}\ge 3$.

We stress again that Theorem~\ref{non-empty main} as such is, up to the claim on the rank, implied by the stronger bound \eqref{eq:daset2} from~\cite{Dasetal} which also applies to the cases 
$\{ m,n\}= \{2,3\}$ we miss. However, 
we emphasize that considerably refined information is provided by the partial results formulated below. The refinements will provide us with information on one or more of the following properties: stronger Hausdorff dimension results in several cases; arbitrary norms; more information on the Dirichlet spectrum, ideally imposing an exact Dirichlet constant akin to Theorem~\ref{jia}; and $\Q$-linear independence of matrix entries together with $\{1\}$. The exact claims we obtain are subject to the dimensions $m,n$ of the matrix.

Regarding the full rank
claim, by considering determinants of maximum quadratic minors we see that the Hausdorff dimension of the set of rank deficient real $m\times n$ matrices equals $mn-(\max\{ m,n\}-\min\{m,n\}+1)$. Hence it seems only implied by \eqref{eq:daset2} if
 \begin{equation} \label{eq:bedingung1}
     mn-\frac{mn}{m+n} >  mn-(\max\{ m,n\}-\min\{m,n\}+1).
 \end{equation}
 Asymptotically this is equivalent to
 \begin{equation} \label{eq:yyx}
     \frac{\max\{ m,n\}}{\min\{m,n\}} > \frac{\sqrt{5}+1}{2}-o(1), \qquad m,n\to\infty,
 \end{equation}
 so roughly speaking, the matrices should not be close to being square matrices.

 \par


The following theorem states some of our results on the Dirichlet spectrum \eqref{eq:dirspe}, with respect to maximum norms.

\begin{theorem} \label{neuth}
Let $k,m,n$ be positive integers.
\begin{itemize}
    \item If $\tfrac{m}{n} \in (0,\tfrac{1}{2}]\cup (1,\infty)$, $
(m,n)\notin \{ (3,2), (5,2), (7,2),\ldots \}$, then $\mathcal{D}_{m,n}$
contains some right neighborhood $[0,c_{m,n}]$ of $0$. 
\item If $m\vert n$ or $n\vert m$ and $(m,n)\ne (1,1)$ we have $\mathcal{D}_{m,n}=[0,1]$. 
\item Moreover \label{eq:teilen}
$\mathcal{D}_{m,n} \subseteq \mathcal{D}_{km,kn}$.

\end{itemize}

\end{theorem}

\begin{remark}
 The aformentioned preprint \cite{AginWeiss2024} has since improved upon these results. In particular they show that for any $m,n$ the spectrum is the full interval, even with respect to arbitrary norms. Nevertheless our results are proven via different methods, moreover they give rise to metrical estimates for the set of involved matrices. The first claim follows more generally from our results for any pair of expanding norms, see Section \ref{se3.1} for a definition. In most cases $\Vert\cdot \Vert_1$ can even be arbitrary. The first claim follows directly from a combination of Theorem~\ref{ganze} and Theorem~\ref{thm5} below, where more general norms are considered, apart from the vector cases $\min\{m,n\}=1$ which are implied by the second claim of the theorem.
The last claim is Corollary \ref{multiples} below, and the second claim follows in turn via the work in \cite{j1, j2}, and since $D=1$ for maximum norms. Note that the last claim implies that we can restrict to coprime $m,n$ for the study of the Dirichlet spectrum with respect to maximum norms. 
\end{remark}

Towards the proof of Theorems~\ref{non-empty main} and~\ref{neuth}, we consider two cases. Firstly when $\tfrac{m}{n}\in [\tfrac{1}{2},2]$, and secondly when $\tfrac{m}{n}\in (0,\tfrac{1}{2})\cup(2,\infty)$. 
Within these cases, we again split into further two subcases 
depending which variable is larger and transition
by using Lemma~\ref{Folklore transpose}. For example in the first case we begin with $\tfrac{m}{n}\in[1,2]$ and then use Lemma~\ref{Folklore transpose} to obtain results for when $\tfrac{m}{n}\in[\tfrac{1}{2},1]$.

\subsection{The case $m/n\in [1/2,2]$ } \label{se3.1}

We first treat the case $m=n$.
Here we keep restricting to maximum norms.

\begin{theorem}  \label{arbn}
    For $m=n\ge 2$, 
    the set $FS_{n,n}=FS_{n,n}(\Vert \cdot \Vert_{\infty},\Vert \cdot \Vert_{\infty})$ 
    satisfies
    \[
    \dim_H FS_{n,n} \ge (n-1)^2+1.
    \]
    Moreover, we can choose the matrices
    of full $\R$-rank $n$ (i.e. invertible).
\end{theorem}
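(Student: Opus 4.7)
My plan is to exhibit an explicit $((n-1)^2+1)$-dimensional family of invertible matrices in $FS_{n,n}$ by taking block-diagonal matrices
\[
\Omega = \begin{pmatrix} A & \boldsymbol{0} \\ \boldsymbol{0}^T & \omega \end{pmatrix} \in \R^{n\times n},
\]
with $A\in Bad_{n-1,n-1}$ invertible and $\omega\in\R\setminus(\Q\cup Bad_1)$, i.e., an irrational number with unbounded partial quotients. By Schmidt's theorem, the invertible elements of $Bad_{n-1,n-1}$ form a set of full Hausdorff dimension $(n-1)^2$ (non-invertibility cuts out a proper algebraic subvariety of lower dimension), while $\R\setminus(\Q\cup Bad_1)$ has full Lebesgue measure in $\R$, hence Hausdorff dimension $1$. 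The map $(A,\omega)\mapsto\Omega$ is a linear isometric embedding into an $((n-1)^2+1)$-dimensional affine subspace of $\R^{n\times n}$, so by the product dimension inequality $\dim_H(X\times Y)\ge\dim_H X+\dim_H Y$ (Marstrand), its image has Hausdorff dimension at least $(n-1)^2+1$, and every $\Omega$ in the family is invertible.

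The core observation is that for such block-diagonal $\Omega$ the approximation function factors as
\[
\psi_\Omega(t)=\min\bigl(\psi_A(t),\,\psi_\omega(t)\bigr).
\]
Indeed, writing $\hat{\boldsymbol{b}}=(\hat{\boldsymbol{b}}_1,b_n)\in\Z^{n-1}\times\Z$ one has $\|\Omega\hat{\boldsymbol{b}}\|_\Z=\max(\|A\hat{\boldsymbol{b}}_1\|_\Z,\|\omega b_n\|_\Z)$ (distance to the integer lattice in max norm), and minimising over nonzero $\hat{\boldsymbol{b}}$ with $\|\hat{\boldsymbol{b}}\|_\infty\le t$ is achieved by zeroing one of the two blocks. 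Two of the three Folklore conditions then follow at once: $\Theta(\Omega)\le\Theta(A)<1=D$ because $A\in Bad_{n-1,n-1}\subseteq Di_{n-1,n-1}$ by the Davenport--Schmidt containment, and $\widetilde\Theta(\Omega)\le\widetilde\Theta(\omega)=0$ because $\omega\notin Bad_1$.

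Non-singularity is the one place a quantitative comparison is required, and it is the main (mild) technical point. Bad approximability of $A$ furnishes a constant $c_A>0$ with $\psi_A(t)\ge c_A/t$ for all $t\ge 1$, while a standard continued-fraction computation (at $t=q_{k+1}-1$ one has $t\psi_\omega(t)\ge(q_{k+1}-1)/(q_{k+1}+q_k)$, which tends to $1$ whenever $a_{k+1}\to\infty$ along a subsequence) shows that unbounded partial quotients of $\omega$ force $\Theta(\omega)=1$. Consequently there are $t_k\to\infty$ with $t_k\psi_\omega(t_k)>c_A$, on which $t_k\psi_\Omega(t_k)=\min(t_k\psi_A(t_k),t_k\psi_\omega(t_k))\ge c_A$; thus $\Theta(\Omega)\ge c_A>0$. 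The only potential obstacle, the comparison $\Theta(\omega)>c_A$ between parameters chosen independently, is eliminated by the automatic equality $\Theta(\omega)=1$ coming from unbounded partial quotients, so the two parameters of the construction decouple completely.
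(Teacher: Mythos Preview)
Your proof is correct and follows essentially the same route as the paper: the same block-diagonal construction $\Omega=\mathrm{diag}(A,\omega)$ with $A\in Bad_{n-1,n-1}$ and $\omega$ irrational not in $Bad_1$, the same factorisation $\psi_\Omega(t)=\min(\psi_A(t),\psi_\omega(t))$ (the paper's Lemma~\ref{llemma}), and the same dimension count via the product inequality. The only cosmetic difference is in the non-singularity step: the paper invokes Khintchine's bound $\Theta(\eta)\ge 1/2$ valid for every irrational $\eta$, whereas you use the sharper $\Theta(\omega)=1$ obtained from unbounded partial quotients via continued fractions---both arguments are valid and yield the same conclusion.
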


 Again, up to the rank claim, this is implied by the result
from \cite{Dasetal}. 
The rank claim indeed appears not to be covered by~\cite{Dasetal}
as \eqref{eq:bedingung1} is easily checked to fail.
The construction of our proof gives very limited control of the exact Dirichlet constant here. Even $0$ being an accumulation point for the Dirichlet spectrum needs some additional conjectural assumption, which on the other hand can be inferred from~\cite[Theorem~3.15]{Dasetal} and its proof method.


Now let us study the case $m\ne n$.
Following~\cite{j1}, let us call a norm $\Vert\cdot\Vert$ on $\mathbb{R}^{k}$ expanding
if $\Vert \textbf{x}\Vert\ge \Vert \pi_j(\textbf{x})\Vert$ for any $\textbf{x}\in \mathbb{R}^k$ and every $1\le j\le k$, where $$\pi_j: (x_1,\ldots,x_k)\to (0,\ldots,0,x_j,0,\ldots,0)$$ is the projection to the $j$-th coordinate axis. As remarked
in~\cite{j1} this is a rather mild condition that in particular includes the maximum norm and all $p$-norms $\Vert \textbf{x}\Vert=(\sum |x_i|^p)^{1/p}$, $p\ge 1$. Recall the definition \eqref{eq:dirspe} of
the Dirichlet spectrum in dimensions $m,n$. Denote further by $\lceil x\rceil$ the smallest integer $\ge x$.
We show the following.

\begin{theorem}  \label{ganze}
    Assume $m,n$ are positive integers with
    \[
  1 < \tfrac{m}{n}, \qquad (m,n)\notin \{(3,2), (5,2), (7,2), \ldots \}. 
    \]
    Let $\Vert\cdot\Vert_1$  be an arbitrary norm on $\R^n$
    and $\Vert\cdot\Vert_2$ be any expanding norm on $\R^m$, unless if
    $n=2$ assume both norms $\Vert\cdot\Vert_1$ 
    and $\Vert\cdot\Vert_2$ are expanding.
    
    Then there exists $c_{m,n}=c_{m,n}(\Vert\cdot \Vert_1,\Vert\cdot\Vert_2)>0$ so that
    for any $c\in [0,c_{m,n}]$, we have
    \[
    FS_{m,n}(\Vert \cdot \Vert_{1},\Vert \cdot \Vert_{2},c)\ne \emptyset
    \]
    in fact unless $n=2$ or $(m,n)=(4,3)$
    we have
    \[
    \dim_H FS_{m,n}(\Vert \cdot \Vert_{1},\Vert \cdot \Vert_{2},c)\ge m\cdot (n-1)-\left\lceil\frac{m}{n}\right\rceil, 
    \]
    and for $(m,n)=(4,3)$ we still have the weaker bound
    $ \dim_H FS_{4,3}(\Vert \cdot \Vert_{1},\Vert \cdot \Vert_{2},c)\ge 4$. In particular the Dirichlet spectrum $\mathcal{D}_{m,n}(\Vert\cdot\Vert_1, \Vert\cdot\Vert_2)$ contains a right neighborhood of $0$. 
    Moreover, in these claims we can choose the matrices
    of full $\R$-rank $n$.
\end{theorem}

 When using a result from the aforementioned preprint \cite{AginWeiss2024}, the theorem can be improved by dropping the condition of $\Vert.\Vert_2$ being expanding and providing a stronger metrical bound of $m(n-1)$. Furthermore, using \cite{AginWeiss2024} vastly simplifies the proof, see 
Theorem~\ref{AWe}, (b) in the appendix below. Clearly the claim on $\mathcal{D}_{m,n}(\Vert\cdot\Vert_1, \Vert\cdot\Vert_2)$ is also implied by \cite{AginWeiss2024}, as for all our results below.

\begin{remark}
    We can improve the Hausdorff dimension
formulas in Theorem~\ref{ganze} a little. For
$\Vert\cdot\Vert_2$ the maximum norm, this works via applying~\cite[Theorem~3.1]{j1}
to estimate from below $\dim_H FS_{\ell,1}(c)$
for $c\in [0,1]$ where
$\ell=\lceil m/n\rceil$. This positive value can be added given the method of our proof below and using \eqref{eq:tric}. For 
general expanding $\Vert\cdot\Vert_2$ a similar argument applies
see~\cite[Remark~2]{j1}. 
This resembles Remark~\ref{remark4}.
    In particular in the cases $(m,2)$ for $m\ge 4$ even, we can settle a positive Hausdorff dimension.
\end{remark}

\begin{remark}
    The condition of $\Vert \cdot \Vert_2$
    being expanding can be relaxed to asking that for some space spanned
    by $\ell:=\lceil m/n\rceil$ canonical base vectors
    $\textbf{e}_{i_1},\ldots, \textbf{e}_{i_{\ell} }$
    the projected norm onto it is expanding. For example, for 
    $n<m\le 2n$ and
    $i_1=1, i_2=2$ this means the norm
    $\Vert (x_1,x_2)\Vert_{2}^{\prime}:=\Vert (x_1,x_2,0,\ldots,0)\Vert_2$ on $\R^2$ is expanding.
\end{remark}


As we prescribe exact Dirichlet constant, our Theorem~\ref{ganze} is not implied by \cite{Dasetal}. As we give metrical statements, and indeed show that the matrices are not badly approximable, the theorem is not implied by \cite{AginWeiss2024}. We remark that full $\Q$-rank of columns, a weaker statement, would be equivalent to the matrix giving rise to an infinite (non-terminating) sequence of best approximations. 

For Theorem \ref{non-empty main} we only need the case $m/n\in (1, 2]$, however for the full claim of Theorem \ref{neuth} we require the full statement of Theorem~\ref{ganze}. To that end we now restrict to the case $m/n\in (1,2]$ as the implications below from other cases will be superseded
from results in Section \ref{se3.2}.
By Lemma~\ref{Folklore transpose} we may deduce an analogous result to Theorem~\ref{ganze}, apart from notably not fixing exact Dirichlet constant $c$, in the cases $n\ge 4,\quad  \tfrac{1}{2} \le \tfrac{m}{n} < 1$. Together with the case $m=n$ from
Theorem~\ref{arbn}, we may summarize

\begin{theorem}  \label{sganz}
    Let $m,n$ be positive integers with
    \[
    \frac{m}{n}\in [\frac{1}{2},2],\qquad \{ m,n\}\notin \{ \{ 1,1\}, \{2,3\}\}.
    \]
    Then $FS_{m,n}(\Vert \cdot \Vert_{\infty}, \Vert \cdot \Vert_{\infty}) \ne \emptyset$. In fact
    \begin{equation}  \label{eq:cbi}
    \dim_H FS_{m,n}(\Vert \cdot \Vert_{\infty}, \Vert \cdot \Vert_{\infty})\ge mn-2\max\{m,n\}.
    \end{equation}
      Moreover, we can choose the matrices
    of full $\R$-rank
    $\min\{ m,n\}$.
    If additionally $m\ne n$,
    then more generally for any norms $\Vert \cdot \Vert_1$ on $\R^n$ and $\Vert \cdot \Vert_2$ on $\R^m$
    \[
    FS_{m,n}(\Vert \cdot \Vert_1, \Vert \cdot \Vert_2) \ne \emptyset, \quad 
    \dim_H FS_{m,n}(\Vert \cdot \Vert_{1}, \Vert \cdot \Vert_{2})\ge mn-2\max\{m,n\},
    \]
    and $\mathcal{D}_{m,n}(\Vert\cdot\Vert_1, \Vert\cdot\Vert_2)$ accumulates at $0$, in particular, it is not finite.
\end{theorem}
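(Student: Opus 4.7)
The strategy is to deduce Theorem~\ref{sganz} by case analysis according to whether $m=n$, $m>n$, or $m<n$, by combining Theorems~\ref{arbn}, \ref{ganze}, and Lemma~\ref{Folklore transpose}. The theorems already cover essentially every claim; the proof is then assembly and verification of dimension bounds.

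If $m=n\ge 4$, I apply Theorem~\ref{arbn} with the maximum norm to obtain
\[
\dim_H FS_{n,n}(\|\cdot\|_\infty,\|\cdot\|_\infty)\ge (n-1)^2+1=n^2-2n+2>n^2-2n,
\]
matching \eqref{eq:cbi} with invertible matrices. If $m>n$ with $m/n\in(1,2]$, the hypotheses of Theorem~\ref{sganz} coincide exactly with those of Theorem~\ref{ganze}: the condition $|m-n|\ne 2$ unless $\{m,n\}=\{2,4\}$ is the same as ``$m-n\ne 2$ if $n\ne 2$'' under $m>n$, and $\|\cdot\|_\infty$ is expanding so the $(4,2)$ sub-case is covered. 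Theorem~\ref{ganze} applied with both norms equal to $\|\cdot\|_\infty$ yields, for some small $c>0$, the bound $mn-m-2$ if $m\ge 5$ and $mn-2m$ if $m=4$, both $\ge mn-2m=mn-2\max\{m,n\}$, with matrices of rank $n=\min\{m,n\}$.

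For $m<n$, I apply the previous case to the swapped pair $(n,m)$ at a Dirichlet constant $c<\delta_{n,m}$ small enough for Lemma~\ref{Folklore transpose} to apply, producing a subset $S\subseteq FS_{n,m}(\|\cdot\|_\infty,\|\cdot\|_\infty,c)$ of Hausdorff dimension at least $nm-2n$. The lemma then gives $S^T\subseteq FS_{m,n}(\|\cdot\|_\infty,\|\cdot\|_\infty)$, and since transposition $\Omega\mapsto\Omega^T$ is a linear bijection between the ambient Euclidean spaces $\R^{n\times m}$ and $\R^{m\times n}$ (an isometry in the Frobenius metric) it preserves Hausdorff dimension, so the bound $mn-2n=mn-2\max\{m,n\}$ transfers. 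Rank is preserved under transposition, yielding matrices of rank $\min\{m,n\}=m$.

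For the arbitrary-norms statement with $m\ne n$: when $m>n$ and the hypotheses of Theorem~\ref{ganze} are met for the given norms (in particular whenever $\|\cdot\|_2$ is expanding, or both norms are expanding in the $(4,2)$ case), that theorem directly produces matrices in $FS_{m,n}(\|\cdot\|_1,\|\cdot\|_2,c)$ with $c$ arbitrarily small, giving both non-emptiness and accumulation at $0$. For general non-expanding norms, including $(m,n)=(4,2)$, I replace the given norms by equivalent expanding ones (say $\|\cdot\|_\infty$) and use the standard equivalence-of-norms estimate
\[
\Theta^{m\times n}(\|\cdot\|_1,\|\cdot\|_2,\Omega)\le K\,\Theta^{m\times n}(\|\cdot\|_\infty,\|\cdot\|_\infty,\Omega),
\]
with $K$ depending only on the norms (derived by rescaling the lattice points appearing in the definition of $\psi_\Omega^{m\times n}$), to show that matrices with arbitrarily small max-norm Dirichlet constant lie in $FS_{m,n}(\|\cdot\|_1,\|\cdot\|_2)$ and have $\Theta^{m\times n}(\|\cdot\|_1,\|\cdot\|_2,\Omega)\to 0$. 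The case $m<n$ follows by combining this with Lemma~\ref{Folklore transpose}, using that German's transference (the source of the lemma) provides quantitative control ensuring that small Dirichlet constants persist after transposition. The main obstacle is purely bookkeeping: verifying that the excluded pairs $\{m,n\}$ match consistently across all three theorems, and handling the $(4,2)/(2,4)$ arbitrary-norms case where Theorem~\ref{ganze} requires both norms to be expanding via the equivalence-of-norms argument above.
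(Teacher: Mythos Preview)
Your proposal is correct and follows essentially the same route as the paper: the case split into $m=n$ (Theorem~\ref{arbn}), $m>n$ (Theorem~\ref{ganze} with the max norm, which is expanding), and $m<n$ (transpose via Lemma~\ref{Folklore transpose}, using that transposition is bi-Lipschitz to carry the Hausdorff dimension bound across). Your bookkeeping verifying that the excluded pairs match and that the dimension bounds from Theorems~\ref{arbn} and~\ref{ganze} dominate $mn-2\max\{m,n\}$ is accurate, and your treatment of arbitrary norms via equivalence of norms (together with the quantitative dependence in the proof of Lemma~\ref{Folklore transpose} for the accumulation claim under transposition) spells out what the paper only sketches with the phrase ``by equivalence of norms.''
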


By contrast to Theorem~\ref{ganze}, this theorem is again, apart from the rank statement in certain cases, implied by \cite[Theorem~3.15]{Dasetal} and its proof. Regarding the rank claim,  by \eqref{eq:yyx} essentially the cases 
$\max\{ m,n\}/\min\{m,n\}\in [1,(1+\sqrt{5})/2=1.61\ldots)$ 
seem not covered by \cite{Dasetal}. 

We remark that \eqref{eq:cbi} can be improved by our method unless
$\{ m,n\}=\{ 2,4\}$ where the right hand side vanishes.
In our constructions of all 
Theorems~\ref{arbn},~\ref{ganze},~\ref{sganz}
the real matrices in our construction have some $0$ entries. In particular the entries
are not $\Q$-linearly independent
as in Theorems~\ref{thm5},~\ref{tT} below,
which would be desirable.

\subsection{The case $m/n\notin (1/2,2)$ }  \label{se3.2}

Now assume $n\ge 2m$.
In this case, we establish the following.

\begin{theorem}  \label{thm5}
	Let $m,n$ be positive integers satisfying
	\begin{equation} \label{matrix dimension condition}
 	\tfrac{m}{n}\le \tfrac{1}{2}, 
 \qquad (m,n)\ne (1,3).
	\end{equation}
	Let $\Vert\cdot\Vert_1$ be any norm on $\R^n$ and $\Vert\cdot\Vert_2$ be any 
	norm 
	on $\R^m$. Then there exists $c_{m,n}=c_{m,n}(\Vert\cdot\Vert_1,\Vert\cdot\Vert_2)>0$ such that for any $c\in[0,c_{m,n}]$ we have
 \begin{equation*}
 \dim_H FS_{m,n}(\Vert\cdot\Vert_1,\Vert\cdot\Vert_2,c) \geq (n-2)m.  
 \end{equation*}
 Analogous claims hold when we restrict to $\Omega\in FS_{m,n}(\Vert\cdot\Vert_1,\Vert\cdot\Vert_2,c)$ so that 
 \begin{itemize}
     \item[(i)]  $\Omega$
     has full $\R$-rank $m$
     \item[(ii)] the set of all its entries $\Omega_{i,j}$ together with $1$ is linearly independent over $\Q$.
 \end{itemize}
\end{theorem}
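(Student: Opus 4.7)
The strategy generalizes the construction underlying Theorem~\ref{jia} (the $m=1$ case, yielding Hausdorff dimension $n-2$) to the matrix setting by building a Cartesian-product-type family of matrices in which an $m\times 2$ ``engine'' sub-matrix controls the Diophantine behaviour while the remaining $m\cdot(n-2)$ entries vary freely. This would explain the target bound $(n-2)m$, which is exactly $m$ copies of the $n-2$ from Theorem~\ref{jia}. First, I would reduce to maximum norms $\Vert\cdot\Vert_1=\Vert\cdot\Vert_\infty$ on $\R^n$ and $\Vert\cdot\Vert_2=\Vert\cdot\Vert_\infty$ on $\R^m$; since we only need a lower bound on $\dim_H$, passing to general norms costs at most a multiplicative distortion which can be absorbed into $c_{m,n}$.

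Next, I would construct an engine sub-matrix $A\in\R^{m\times 2}$ (the first two columns of $\Omega$) together with an explicit sequence of ``best approximations'' $\hat{\bb}_k=(b_1^{(k)},b_2^{(k)},0,\ldots,0)\in\Z^n$ of norms $t_k=\Vert\hat{\bb}_k\Vert_\infty$ growing sufficiently rapidly, adapting the techniques of~\cite{ngm, j1, j2}. The prescription forces
\[
\Vert A\cdot (b_1^{(k)},b_2^{(k)})^T-\tilde{\bb}_k\Vert_\infty = c\cdot t_k^{-n/m}(1+o(1))
\]
along this sequence for appropriate $\tilde{\bb}_k\in\Z^m$, and no competitor integer vector violates this rate. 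The condition $n\ne m+2$ avoids the geometric obstruction from~\cite{ngm} mentioned in the remark following Theorem~\ref{thm1}, while the hypothesis $n\ge 2m$ gives enough ``room'' in the lattice to realise the required simultaneous approximations. To guarantee $\Omega\notin Bad_{m,n}$, I would insert along a very sparse sub-sequence of indices some extra-good approximations of $A$, driving $\widetilde{\Theta}^{m\times n}(\Omega)$ to~$0$.

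The final step extends $A$ to the full matrix $\Omega=(A\mid B)$ by letting the $m(n-2)$ entries of $B\in\R^{m\times (n-2)}$ range freely over a box of positive Lebesgue measure. Because each $\hat{\bb}_k$ is supported in the first two coordinates, $B$ does not enter the product $\Omega\hat{\bb}_k$ at all, and the same $\tilde{\bb}_k\in\Z^m$ continues to realise the prescribed rate for every admissible $B$. This yields a genuine Cartesian product, and standard slicing/Marstrand-type arguments give $\dim_H\ge m(n-2)$. Requirements (i) full $\R$-rank~$m$ and (ii) $\Q$-linear independence of entries together with $1$ are generic; they are violated only on a countable union of proper algebraic varieties, which is a null set and therefore does not affect the dimension bound.

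The main obstacle is the engine step: prescribing the exact Dirichlet constant $c$ simultaneously for all $m$ rows of $A$ while excluding degradation at every scale between the $t_k$, and while also controlling the liminf to stay at zero. This requires a careful quantitative adaptation of the inhomogeneous-approximation machinery of~\cite{ngm} to the $m$-fold simultaneous setting, and is precisely where the hypotheses $n\ge 2m$ and $n\ne m+2$ become indispensable. The rest of the proof -- reduction to maximum norms, freedom of the last $n-2$ columns, and the genericity conditions (i)-(ii) -- is then essentially routine.
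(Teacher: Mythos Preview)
Your overall architecture is right---an $m\times 2$ engine controlling the Diophantine behaviour, extended by $m(n-2)$ free entries---but you have mislocated the main difficulty and the roles of the two hypotheses.

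The engine step is actually much easier than you suggest. The paper takes $V\in\R^{m\times 2}$ with \emph{all $m$ rows identical}, equal to a single vector $\ux=(\xi_1,\xi_2)$ constructed exactly as in the $m=1$ proof (Step~I of \S\ref{st1}, with exponent $n/m$ replacing $n$). For the maximum norm one then checks directly that $\psi_V^{m\times 2}(t)=\psi_{\ux}^{1\times 2}(t)$, so the exact Dirichlet constant and the vanishing liminf transfer immediately from the scalar case. The hypothesis $n\ge 2m$ enters here, to make the Minkowski-type exclusion argument (Case~2 of Lemma~\ref{wichtig!}) go through with exponent $n/m\ge 2$; no genuine ``$m$-fold simultaneous'' adaptation of~\cite{ngm} is needed for the engine.

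The real gap is in your extension step. You correctly note that the engine's approximations $\hat\bb_k=(b_1^{(k)},b_2^{(k)},0,\ldots,0)$ ignore $B$, giving $\Theta^{m\times n}(\Omega)\le c$ for \emph{every} $B$. But you never establish the reverse inequality $\Theta^{m\times n}(\Omega)\ge c$: an integer vector with nonzero entries in coordinates $3,\ldots,n$ could in principle beat the engine at some scale, pushing $\Omega$ into $Sing_{m,n}$. This is where~\cite{ngm} actually enters, via a Lebesgue-measure argument (Lemma~\ref{lemme}) showing that for a \emph{positive-measure} (not full) subset $\mathcal{Y}\subseteq\R^{m\times(n-2)}$ of matrices $B$ the extended $\Omega=(V,B)$ retains Dirichlet constant exactly $c$. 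The hypothesis $n\ne m+2$ is used precisely here, to avoid a logarithmic divergence in the volume sum---not in the engine construction. So your ``genuine Cartesian product'' and ``essentially routine'' claims for this step are too optimistic.

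Finally, claim (ii) is not generic in the way you assert: since the engine $V$ has repeated rows, its entries are trivially $\Q$-linearly dependent, and genericity of $B$ cannot repair this. The paper remedies it by a separate sign-variation construction (\S\ref{brand}), replacing the entries in row $k$ by $\sum_j \delta_{k,j}A_j^{-1}$ and $\sum_j \delta_{k,j}^\ast B_j^{-1}$ for suitably chosen $\pm 1$ sequences, and then invoking a countability argument (Proposition~\ref{p}) for the remaining entries.
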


\begin{remark} \label{REMA}
Similar to Theorem~\ref{ganze}, our proof below can be significantly simplified when taking into account 
results from the preprint \cite{AginWeiss2024}.
Moreover, we may omit 
condition $n\ge 2m$ from \eqref{matrix dimension condition}.
However, as opposed to Theorem~\ref{ganze},  it does not improve the metrical statement. See Theorem \ref{AWe}, (a) below.
\end{remark}

Theorem~\ref{jia} is the special case $m=1$, $\Vert\cdot\Vert_{1}=\Vert\cdot\Vert$ and $\Vert\cdot\Vert_2=|.|$, the absolute value on $\R$. Note when $m=1$ conditions (i), (ii) are automatically satisfied for non-singular vectors. We will use 
Theorem~\ref{jia} to prove Theorem~\ref{thm5}. Due to fixing the Dirichlet constant none of the claims are implied by \cite{Dasetal}, nor is it implied by \cite{AginWeiss2024} due to our metrical claim. \par 
Note that in assumption (i), we impose
independence of rows over $\R$, not $\Q$. In the case $(m,n)=(1,3)$ where the results of Theorem~\ref{thm5} 
are not applicable, avoiding results from~\cite{Dasetal}, we can still deduce that the larger set
\begin{equation} \label{eq:FG}
Di_{1,3}(\Vert\cdot\Vert_{1},\Vert\cdot\Vert_{2},c)\setminus Bad_{1,3}\supseteq FS_{1,3}(\Vert\cdot\Vert_{1},\Vert\cdot\Vert_{2},c)
\end{equation}
is uncountable for arbitrarily small $c>0$, since 
by~\cite{j2} we have 
$FS_{1,3}(\Vert\cdot\Vert_{\infty},\Vert\cdot\Vert_{\infty},c)\ne \emptyset$ for any
$c\in [0,1]$
and by the equivalence of norms.
By~\cite{Dasetal}, in fact, the Hausdorff dimension bound \eqref{eq:daset2} applies for the left-hand side set in \eqref{eq:FG}. Similar to 
Section~\ref{se3.1}, using Lemma~\ref{Folklore transpose} for the reverse case $m\ge 2n$,
we get the following corollary. 

\begin{theorem}  \label{tT}
	Let $m,n$ be positive integers satisfying 
 \begin{equation} \label{eq:jjj}
\frac{m}{n}\notin \left(\tfrac{1}{2} , 2\right) , \qquad  (m,n)\notin \{ (1,3), (3,1), (2,4) , (4,2) \}.
 \end{equation}
	Let $\Vert\cdot\Vert_1$ be any norm on $\R^n$ and $\Vert\cdot\Vert_2$ be any 
	norm on $\R^m$.
	Then 
	\[
	\dim_H FS_{m,n}(\Vert\cdot\Vert_1,\Vert\cdot\Vert_2) \geq (\max\{m,n\}-2)\cdot \min\{m,n\}=mn-2\cdot \min\{m,n\}.
	\]
 Moreover we may again assume (i), (ii), and the Dirichlet spectrum $\mathcal{D}_{m,n}(\Vert\cdot\Vert_1, \Vert\cdot\Vert_2)$ accumulates at $0$, in particular it is not finite.
\end{theorem}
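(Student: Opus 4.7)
The plan is to reduce Theorem~\ref{tT} to Theorem~\ref{thm5}, splitting into the two cases $m/n\le 1/2$ and $m/n\ge 2$, and using Lemma~\ref{Folklore transpose} to pass between them. Transposition is a coordinate permutation on $\R^{m\times n}$, hence an isometry for the Euclidean structure, so it preserves Hausdorff dimension as well as full $\R$-rank and $\Q$-linear independence of the matrix entries together with $1$; these facts will be used repeatedly without further comment.

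In the first case, where $\tfrac{m}{n}\le \tfrac{1}{2}$ and $(m,n)\notin\{(1,3),(2,4)\}$, Theorem~\ref{thm5} applies directly and provides $c_{m,n}>0$ such that
\[
\dim_H FS_{m,n}(\Vert\cdot\Vert_1,\Vert\cdot\Vert_2,c) \ge (n-2)m
\]
for every $c\in[0,c_{m,n}]$, with matrices satisfying (i) and (ii). Since $FS_{m,n}(\Vert\cdot\Vert_1,\Vert\cdot\Vert_2,c)\subseteq FS_{m,n}(\Vert\cdot\Vert_1,\Vert\cdot\Vert_2)$ by \eqref{dirichlet improvable containment}, the advertised bound $(\max\{m,n\}-2)\min\{m,n\}=(n-2)m$ follows. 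Accumulation of the Dirichlet spectrum at $0$ is immediate here: any $\Omega\in FS_{m,n}(\Vert\cdot\Vert_1,\Vert\cdot\Vert_2,c)$ satisfies $\Theta^{m\times n}(\Omega)=c$, and $c$ may be taken arbitrarily small.

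For the second case, where $\tfrac{m}{n}\ge 2$ and $(m,n)\notin\{(3,1),(4,2)\}$, I would observe that the swapped pair $(n,m)$ satisfies the hypotheses of Theorem~\ref{thm5} with the roles of the two norms exchanged. Hence there is $c_{n,m}>0$ such that for every $c\in(0,c_{n,m}]$ some set $E_c\subseteq FS_{n,m}(\Vert\cdot\Vert_2,\Vert\cdot\Vert_1,c)$ of full $\R$-rank, $\Q$-linearly independent matrices satisfies $\dim_H E_c\ge (m-2)n$. Now choose $c$ below $\min\{c_{n,m},\delta_{n,m}\}$, where $\delta_{n,m}$ is the threshold provided by Lemma~\ref{Folklore transpose}; that lemma then guarantees $\{\Omega:\Omega^T\in E_c\}\subseteq FS_{m,n}(\Vert\cdot\Vert_1,\Vert\cdot\Vert_2)$. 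Transposing back yields $\dim_H FS_{m,n}(\Vert\cdot\Vert_1,\Vert\cdot\Vert_2) \ge (m-2)n=(\max\{m,n\}-2)\min\{m,n\}$ together with properties (i) and (ii).

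The one point requiring a little care is accumulation of the spectrum at $0$ in this second case, as Lemma~\ref{Folklore transpose} does not by itself relate $\Theta^{m\times n}(\Omega)$ to $\Theta^{n\times m}(\Omega^T)=c$ quantitatively. I would resolve this by invoking the underlying transference inequality of German~\cite[Theorem 7]{german} directly, which produces a bound $\Theta^{m\times n}(\Omega)\le g_{m,n}(c)$ with $g_{m,n}(c)\to 0$ as $c\to 0^+$; combined with the preservation of singularity under transposition (Khintchine's transference), which yields $\Theta^{m\times n}(\Omega)>0$ whenever $\Omega^T\notin Sing_{n,m}$, this exhibits members of $FS_{m,n}(\Vert\cdot\Vert_1,\Vert\cdot\Vert_2)$ of arbitrarily small positive Dirichlet constant. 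I do not foresee a substantive obstacle; the proof is essentially a packaging of Theorem~\ref{thm5} with Lemma~\ref{Folklore transpose}, the only mildly technical step being the quantitative transference argument just described.
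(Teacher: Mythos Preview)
Your proposal is correct and follows essentially the same route as the paper: the case $m/n\le 1/2$ is exactly Theorem~\ref{thm5}, and the case $m/n\ge 2$ is obtained from it by applying Lemma~\ref{Folklore transpose} to the transposed problem, with the quantitative accumulation at $0$ coming from the German transference bound that underlies the proof of that lemma. The paper treats Theorem~\ref{tT} as an immediate corollary along precisely these lines and does not give a separate detailed argument.
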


 By contrast to Theorem~\ref{thm5}, since we lose our grip on the exact Dirichlet constant and \eqref{eq:yyx} is implied by \eqref{eq:jjj}, all but the claim (ii)
are directly implied by Das et al \cite{Dasetal}. Since exceptions to (ii) form a countable union of rational hyperplanes thus of Hausdorff dimension $mn-1$ which is not exceeded by
the value from
\eqref{eq:daset2} as soon as $\min\{ m,n\}>1$,
(ii) is indeed not implied unless in the vector cases.

Note also that for the cases $\{m,n\}= \{2,4\}$ and $\{m,n\}=\{1,3\}$ excluded in \eqref{eq:jjj}, we have results via Theorem~\ref{ganze} and~\cite{j1, j2} respectively.

\begin{remark}
    It may be true that $Di_{m,n}=Di_{n,m}$ for every pair $m,n$. If $\min\{m,n\}=1$, one inclusion of the set identity was proved by Davenport and Schmidt~\cite{dav}. 
    Assuming this property would give a shortcut avoiding Lemma~\ref{Folklore transpose} for the proofs of Theorems~\ref{sganz},~\ref{tT}. 
\end{remark}


\section{Brief outline of proofs} \label{outline}
The outline of are proofs are as follows. For clarity we provide a summary graph of how our theorems link at the end of this section.




Theorem~\ref{thm1} is proven in Sections~\ref{proof dual case} and \ref{Sect6}, and Theorem~\ref{jia} is proven in Section~\ref{Sect7}. The dotted arrow in the above figure between Theorem~\ref{jia} and Theorem~\ref{thm5} indicates that the methodology of proof is similar, but by no means does Theorem~\ref{jia} immediately imply Theorem~\ref{thm5}. The same applies for the arrow between \cite[Theorem 2.2]{j1} and Theorem~\ref{ganze}, where besides Lemma~\ref{lemme}
also, the rather easy Lemma~\ref{llemma} below is a crucial ingredient.
The complete proof of Theorem~\ref{thm5} is given in Section~\ref{Sect8}. In Section~\ref{Sect9}, Theorems \ref{arbn} and \ref{ganze} are proven. All other implications are rather immediate.
Thus the main claims to be proved are those highlighted in bold font.
Lemma~\ref{Folklore transpose} is proven in Section~\ref{Sect10}. \par 

All proofs of claims from Sections 
 \ref{dual results} and ~\ref{se3.2} are based on constructions for the case $m=1, n=2$ of one linear form in two variables. For Theorem~\ref{thm1}, we can use a construction from~\cite{j2}, however for the results in arbitrary norms (Theorem~\ref{jia}) we follow 
essentially a different construction from~\cite{j3}, with some refinements.
Then, for either result, similar as in~\cite{j3}, we use a method of Moshchevitin~\cite{ngm}
to extend the claim to larger $n$. In Theorem~\ref{thm5} when $m>1$, 
there is some additional variational argument needed to extend the $m=1, n=2$
case first to arbitrary $m$ and $n=2$ before we use Moshchevitin's argument
to be able to increase $n$ as well arbitrarily. As an artifact of the method
from~\cite{ngm}, 
we lose the case $m=1, n=3$.
On the other hand, $n\ge 2m$ is essentially required to use the first step $m=1, n=2$ to conclude with the method in \cite{ngm}. 

Concerning Section~\ref{se3.1},
for Theorem \ref{ganze} we apply
a result from~\cite{j1} on $m=2, n=1$. 
We also again employ Moshchevitin's construction~\cite{ngm} here.  
We 
lose the case $(m,n)=(3,2)$ in Theorem~\ref{non-empty main} and the cases $(m,2)$ for $m\ge 3$ odd in Theorem~\ref{neuth}
due to problems originating in~\cite{j1}.
The special case $m=n\ge 2$ of Theorem~\ref{arbn} is not covered by the argument, indeed it does not need either of these prerequisite results~\cite{ngm, j1} but uses a different, self-contained construction. Its proof is considerably
shorter and easier than for the other main claims in bold font.

In summary, the claims above are connected in the following way (see below
for Corollary \ref{multiples}):
\medskip

\begin{center}
\begin{tikzpicture}[grow=right,->, >=angle 60, scale=1, framed]

\node at (11.2,0) {\bf Theorem \ref{thm1}};
\node at (4.4,-1) {Corollary \ref{Folklore set dimension lower bound}};

\path[->] (10.6,-.25)  edge  [bend left=30]  node[above,xshift=-.5cm] {\footnotesize Lemma \ref{Folklore transpose}} (5.6,-1.2);

\path[->] (10.7,-.25)  edge  [bend left=20]  node[below] {} (11.4,-5.2);

\begin{scope}[yshift=-4cm]

\node at (-.2,0) (a){[27, Theorem 2.2]};
\node at(4.4,1) (b){\bf Theorem \ref{arbn}};
\node at(4.4,-1) (c){\bf Theorem \ref{ganze}};
\path[dashed, ->] (0.85,-.25)  edge  [bend right=40]  node[below, yshift=-.1cm] {\footnotesize Lemma \ref{lemme}} (3.1,-1.2);

\path[->] (5.75,-1.2)  edge  [bend right=40]  node[below, yshift=-.1cm] {\footnotesize Lemma \ref{Folklore transpose}} (8.3,-.35);

\path[->] (5.75,1.2)  edge  [bend left=40]  node[below] {} (8.3,.35);
\node at (9, 0){Theorem \ref{sganz}}; 

\path[->] (3.75,-1.3)  edge  [bend right=40]  node[below] {} (4,-2.8);
\node at (4.4, -3){ Theorem \ref{neuth}};
\draw [->] (0.8, -3)--(3.2,-3);
\node at (-.6, -3){Corollary \ref{multiples}};

\path[->] (3.75,-4.8)  edge  [bend left=40]  node[below] {} (4,-3.3);
\end{scope}

\draw[dashed, ->] (1, -9)node[left]{\bf Theorem \ref{jia}} -- (3.1, -9) node[right] {\bf Theorem \ref{thm5}};
\draw[->] (6, -9)node[below, xshift=1.1cm]{\footnotesize Lemma \ref{Folklore transpose}}-- (8.2, -9)node[right]{Theorem \ref{tT}};
\node at (2, -9.3){\footnotesize Lemma \ref{lemme}};


\node at (11.2, -5.43) { Theorem \ref{non-empty main}};

\path[->] (10.3,-4)  edge  [bend left=40]  node[below] {} (11.2,-5.15);

\path[->] (10.7,-8.8)  edge  [bend right=40]  node[below] {} (11.5,-5.7);
\end{tikzpicture}
\end{center}

 As indicated before,
many of our relevant proofs can be simplified and certain claims can be improved when using results from the aforementioned preprint \cite{AginWeiss2024}. We will detail this in the relevant places below.


\section{Proof of Theorem~\ref{thm1}}

\subsection{Simplifying the problem  } \label{proof dual case}

In short, the proof combines the special two-dimensional case of~\cite{j2} with an idea of Moshchevitin~\cite{ngm}.
We first recall the result on linear forms in two variables. Throughout this section fix $n\in\N_{\geq 2}$ and let $\|\cdot\|_{1}=\Vert\cdot\Vert_{\infty}$ on $\R^{n}$ and $\|\cdot\|_{2}=|\cdot|$ on $\R$. For $\uz \in \R^{n}$ and $t\ge 1$ for ease of notation write
\begin{equation*}
\psi_{\uz}^{1\times n}(\|\cdot\|_{1},|\cdot|,t)=\psi_{\uz}^{1\times n}(t).
\end{equation*}

 \begin{theorem}[\cite{j2}] \label{thm2}
 	Let $n\ge 2$ and $c\in [0,1]$. Then there
 	exist $\Q$-linearly independent with $1$ real vectors $\ux=(\xi_1,\xi_2)\in\R^{2}$ satisfying
 	\begin{equation} 
  \label{limsup property}
 	\limsup_{t\to\infty} t^n \psi_{\ux}^{1\times 2}(t)= c,
 	\end{equation}
 	and moreover
 	\begin{equation} 
  \label{liminf property}
 	\liminf_{t\to\infty} t^n \psi_{\ux}^{1\times 2}(t)= 0.
 	\end{equation}
 \end{theorem}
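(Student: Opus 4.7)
The plan is to reduce Theorem~\ref{thm2} to a statement about the sequence of best approximations of the linear form $\bb\mapsto b_1\xi_1+b_2\xi_2+b_3$, and then realize the prescribed behavior by an inductive geometric construction. For $\ux=(\xi_1,\xi_2)$ with $(1,\xi_1,\xi_2)$ linearly independent over $\Q$, let $\bb_1,\bb_2,\ldots\in\Z^3$ be the (necessarily infinite) sequence of best approximations, and write
\[
X_k=\max\{|b_1^{(k)}|,|b_2^{(k)}|\},\qquad L_k=|b_1^{(k)}\xi_1+b_2^{(k)}\xi_2+b_3^{(k)}|.
\]
Then $\psi_{\ux}^{1\times 2}(t)=L_k$ for $t\in[X_k,X_{k+1})$, and since $t\mapsto t^n L_k$ is increasing on each such interval,
\[
\limsup_{t\to\infty} t^n\psi_{\ux}^{1\times 2}(t)=\limsup_{k\to\infty} X_{k+1}^n L_k,\qquad \liminf_{t\to\infty} t^n\psi_{\ux}^{1\times 2}(t)=\liminf_{k\to\infty} X_k^n L_k.
\]

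A clean way to hit both targets simultaneously is to prescribe $L_k=c\,X_{k+1}^{-n}$ for every $k$; this forces $X_{k+1}^n L_k\equiv c$, so the limsup identity is trivial, while $X_k^n L_k=c(X_k/X_{k+1})^n$ goes to $0$ along any subsequence on which $X_{k+1}/X_k\to\infty$. The combinatorial layer is thus immediate: interleave stages with bounded ratios $X_{k+1}/X_k$ (so the limsup is attained everywhere, not just along a subsequence) with sparse stages where $X_{k+1}\gg X_k^{1+\delta}$ (to drive the liminf to zero). The case $c=0$ is the classical construction of vectors of uniform exponent at least $n\ge 2$, handled by taking any sequence $L_k\to 0$ with $X_{k+1}^n L_k\to 0$.

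The technical core is the inductive realization. At stage $k$ one maintains an approximant $\ux_k\in\R^2$ together with integer vectors $\bb_1,\dots,\bb_k$ that are genuine best approximations of $\ux_k$ with the prescribed heights and qualities. The set $U_k\subset\R^2$ of $\ux$ for which $\bb_1,\dots,\bb_k$ remains a prefix of the best-approximation sequence is open, and $\ux\mapsto b_1^{(k)}\xi_1+b_2^{(k)}\xi_2+b_3^{(k)}$ is an affine function on $U_k$. Applying Minkowski's convex body theorem to the three-dimensional lattice associated to $\ux_k$ yields, for any sufficiently large prescribed $X_{k+1}$, a primitive $\bb_{k+1}\in\Z^3$ of height $X_{k+1}$ with form value at most $X_{k+1}^{-2}$ at $\ux_k$. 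One then perturbs $\ux$ inside the level set $U_k\cap\{L_k=c X_{k+1}^{-n}\}$, which is a nonempty hypersurface since $c X_{k+1}^{-n}\ll X_{k+1}^{-2}$ for $n\ge 3$ and large $X_{k+1}$; the case $n=2$ is borderline and requires $c\le 1$, which is why the theorem restricts to $c\in[0,1]$. Nested shrinking of neighborhoods produces a Cauchy sequence whose limit $\ux$ inherits $\{\bb_k\}$ as its best-approximation sequence, and $\Q$-linear independence of $(1,\xi_1,\xi_2)$ follows automatically from $L_k\to 0$.

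The main obstacle, which is where the detailed verification in~\cite{j2} lies, is ensuring that each inserted $\bb_{k+1}$ is truly the \emph{next} best approximation of the final vector $\ux$: no integer vector with intermediate height in $(X_k,X_{k+1})$ should accidentally yield a smaller error. This requires a careful analysis of the $L^{\infty}$-parallelepipeds
\[
\{\bb\in\Z^3:\max(|b_1|,|b_2|)\le X,\ |b_1\xi_1+b_2\xi_2+b_3|\le Y\}
\]
at every intermediate scale, exploiting that any two consecutive best approximations span a two-dimensional sublattice whose covolume is of order $X_{k+1}L_k$, which under our choices tends to $0$ very rapidly. Once this geometric verification is in place, substituting $L_k=c X_{k+1}^{-n}$ and the chosen pattern of ratios $X_{k+1}/X_k$ yields the claimed limsup and liminf identities.
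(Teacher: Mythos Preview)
Your approach is genuinely different from the paper's, and it contains a gap that you would need to repair.

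The paper (following~\cite{j2}) does not carry out an inductive perturbation at all. It writes down an explicit Liouville-type vector: one defines an integer sequence $(a_j)$ by $a_{2k+1}=a_{2k}^{M_k}$ with $M_k\to\infty$ and $a_{2k+2}=a_{2k+1}\lceil c^{-1}a_{2k+1}^{n-1}\rceil$, and sets $\xi_j=\sum_{k\ge 0} a_{2k+j}^{-1}$. The best approximations are then \emph{known in advance}---they are the obvious vectors $(a_{2k+1},0,-F_k)$ and $(0,a_{2k+2},-G_k)$ coming from the partial sums---and the verification that nothing else interferes is the content deferred to~\cite{j2}. No Minkowski step, no perturbation of $\ux$, no level-set argument.

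Your sketch, by contrast, tries to manufacture the best-approximation sequence one term at a time by invoking Minkowski and then sliding $\ux$ onto a level set. The gap is in the Minkowski step. For a generic point $\ux_k$, Minkowski only furnishes $\bb_{k+1}$ of height $\le X_{k+1}$ with form value $\lesssim X_{k+1}^{-2}$. But you have already committed to $L_k=cX_{k+1}^{-n}$, so for $\bb_{k+1}$ to be the \emph{next} best approximation you need $L_{k+1}<cX_{k+1}^{-n}$, which for $n\ge 3$ is far stronger than $X_{k+1}^{-2}$. Minkowski alone cannot produce this; it only works if $\ux_k$ is already extremely well approximable (essentially lying on a rational line), and you have not arranged this. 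The sentence ``nonempty hypersurface since $cX_{k+1}^{-n}\ll X_{k+1}^{-2}$'' does not address this issue---that inequality goes the wrong way for what you need.

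Relatedly, your appeal to~\cite{j2} for ``the detailed verification'' is misplaced: the verification there is for the explicit sum-of-reciprocals vector, not for an abstract perturbative scheme. If you want to salvage your route, you would have to force $\ux_k$ to lie exactly on the zero line of some $\bb_k$ at each stage (so that integer multiples along that line supply the very small $L_{k+1}$ you need), and then control the interaction of the successive level-set constraints---at which point you are essentially rediscovering the explicit construction.
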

 
\begin{remark}
	When $n\ge 3$, the liminf statement \eqref{liminf property} follows from the limsup claim \eqref{limsup property}, for example
	by Marnat and Moshchevitin~\cite{mamo}. The linear independence over $\Q$ of $(\ux,1)$
	is also implied by \eqref{limsup property}, so it is just stated for completeness.
\end{remark}

Observe that for $n>2$ this is not the natural order of 
decay as in Dirichlet's theorem, but considerably faster, so $\ux$ is singular.
This rate of decay will be required to increase the dimension to $n$.

\begin{proof}[Sketch of Proof of Theorem~\ref{thm2}]
We only provide a sketch of the proof since, as previously mentioned, the claim is a special case of~\cite[Theorem~1.1]{j2} for $n=2$. 
As the proof of this result was only sketched in~\cite{j2}, we briefly recall the construction for convenience of the reader, with $\Phi(t)=ct^{-n}$.
Define an integer sequence $(a_j)_{j\ge 1}$ as follows: Take
the initial two terms $a_1, a_2$ large enough with $a_1|a_2$. Now iteratively,
for $k\geq 1$ having constructed the first $2k$ terms $a_1,\ldots,a_{2k}$, let
the next two terms are given by the recursion
\begin{equation*} 
a_{2k+1}= a_{2k}^{M_k}
\end{equation*}
and 
\begin{equation*} 
a_{2k+2}=a_{2(k+1)}= a_{2k+1}\cdot \lceil c^{-1}a_{2k+1}^{n-1}\rceil\in (a_{2k+1}^n,\infty),
\end{equation*}
with integers $M_k\to\infty$ that tend to be infinitely fast enough. It turns 
out that $\ux=(\xi_1,\xi_2)\in \R^{2}$ with 
\[
\xi_j= \sum_{k=0}^{\infty} a_{2k+j}^{-1}, \qquad j=1,2,
\]
has the desired properties of Theorem~\ref{thm2}.
\end{proof}
\begin{remark}
An alternative construction for $c$ small enough can be derived from the method in~\cite{j3}, we will need the latter for the case of arbitrary norms. Yet another proof, for arbitrary norms, can be found in the aforementioned preprint of Agin and Weiss~\cite{AginWeiss2024}.
\end{remark}

Now we want to blow up the vectors in Theorem~\ref{thm2} from $\R^2$ to $\R^n$ by adding components $\xi_3,\ldots,\xi_n$. Let us call the new vector(s) $\uz\in\R^n$. The following is an easy observation.

\begin{proposition}  \label{pro}
	Take any $\ux\in\R^2$ as in Theorem~\ref{thm2}. 
	Any $\uz\in\R^n$ with first two coordinates $\xi_1=\zeta_1, \xi_2=\zeta_2$,
	that is any vector $\uz\in \{ \ux\}\times \R^{n-2}\subseteq \R^n$, 
	satisfies $\uz\in Di_n(c)\setminus Bad_n$. 
\end{proposition}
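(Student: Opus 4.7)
The plan is to observe that enlarging the dimension from $2$ to $n$ by adjoining coordinates $\zeta_3,\ldots,\zeta_n$ can only improve approximation quality, because the approximating set of integer coefficient vectors grows. Concretely, for any integer point $(\hat{\bb},b_{n+1})=(b_1,b_2,b_3,\ldots,b_n,b_{n+1})\in\Z^{n+1}$ appearing in the minimum defining $\psi_{\uz}^{1\times n}(t)$, we may legitimately restrict our attention to those $\bb$ with $b_3=\cdots=b_n=0$; for such vectors $b_1\zeta_1+b_2\zeta_2+\cdots+b_n\zeta_n+b_{n+1}=b_1\xi_1+b_2\xi_2+b_{n+1}$, and the height condition $\max_{1\leq i\leq n}|b_i|\leq t$ reduces to $\max\{|b_1|,|b_2|\}\leq t$. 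This exhibits every admissible $2$-dimensional approximation of $\ux$ at scale $t$ as an admissible $n$-dimensional approximation of $\uz$ at the same scale, whence
\[
\psi_{\uz}^{1\times n}(t)\;\le\;\psi_{\ux}^{1\times 2}(t)\qquad\text{for every }t\ge 1.
\]

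Multiplying by $t^{n}$ and taking limits superior, respectively inferior, as $t\to\infty$, the defining properties \eqref{limsup property} and \eqref{liminf property} of $\ux$ from Theorem~\ref{thm2} give
\[
\Theta^{1\times n}(\uz)\;=\;\limsup_{t\to\infty} t^n\psi_{\uz}^{1\times n}(t)\;\le\;c,\qquad
\widetilde{\Theta}^{1\times n}(\uz)\;=\;\liminf_{t\to\infty} t^n\psi_{\uz}^{1\times n}(t)\;\le\;0.
\]
The first inequality is precisely the statement that $\uz\in Di_n(c)$, while the second forces $\widetilde{\Theta}^{1\times n}(\uz)=0$ (it is non-negative by definition), and hence $\uz\notin Bad_n$. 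Combining these yields $\uz\in Di_n(c)\setminus Bad_n$ as required.

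There is really no obstacle in this proposition, as it is a direct monotonicity observation on $\psi$ under the addition of variables; the content of the construction has been absorbed into Theorem~\ref{thm2}. The one point worth flagging is the direction of the inequality: adding free coefficients decreases (not increases) the infimum, so $n$-dimensional approximation is at least as good as the embedded $2$-dimensional one, which is exactly what both halves of the conclusion require. The genuinely hard work, namely arranging the exact Dirichlet constant together with the fast liminf decay simultaneously in the $2$-dimensional seed $\ux$, is done by Theorem~\ref{thm2}; Proposition~\ref{pro} then pays no additional cost. Note also that the argument does not control the value of $\Theta^{1\times n}(\uz)$ from below, so this step alone only places $\uz$ inside the larger set $Di_n(c)\setminus Bad_n$ rather than inside $FS_n(c)=DI_n(c)\setminus Bad_n$; the exact-constant refinement, together with the Hausdorff dimension bound obtained by varying $(\zeta_3,\ldots,\zeta_n)$, will be handled in the next section along the lines of Moshchevitin~\cite{ngm}.
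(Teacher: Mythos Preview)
Your proof is correct and follows essentially the same approach as the paper: both arguments establish the pointwise inequality $\psi_{\uz}^{1\times n}(t)\le \psi_{\ux}^{1\times 2}(t)$ by embedding the two-dimensional best approximation $(b_1,b_2,b_3)$ into $\Z^{n+1}$ as $(b_1,b_2,0,\ldots,0,b_3)$, then multiply by $t^n$ and pass to the $\limsup$ and $\liminf$ using \eqref{limsup property} and \eqref{liminf property}. Your additional commentary about not controlling $\Theta^{1\times n}(\uz)$ from below matches the paper's remark immediately following the proposition.
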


\begin{proof}
	Let us write $\ux^{\ast}= (\xi_1, \xi_2, 1)$ and similarly 
	$\uz^{\ast}=(\zeta_1,\ldots,\zeta_n,1)$. Denote $\ee_i=(0,0\ldots,0,1,0,\ldots,0)$ the $i$-th canonical base vector
	of $\R^{n+1}$. \par 
 Our claim follows from the fact that 
	\begin{equation} \label{psi decreasing}
	\psi_{\uz}^{1\times n}(t) \le \psi_{\ux}^{1\times 2}(t),\qquad t>0.
	\end{equation}
Indeed, any $t$ induces a best approximation $\bb=(b_{1}, b_{2}, b_{3})\in \Z^3$ with respect to $\ux$ that realizes $\psi_{\ux}(t)$. Then the blown-up vector
\[
\tilde{\bb}:= b_{1} \ee_1 + b_{2} \ee_2+ b_{3} \ee_{n+1}=(b_1, b_2, 0,\ldots,0,b_{3})\in \Z^{n+1}
\]
induces the same maximum norm and approximation quality
\[
\Vert \hat\bb\Vert= \Vert \hat{\tilde\bb}\Vert,  \qquad 
| \tilde{\bb} \cdot \uz^{\ast}|= | \bb \cdot \uxs|= \psi_{\ux}^{1\times 2}(t).
\]  
so the minimum over all integer points defining $\psi_{\uz}(t)$ cannot be larger. Trivially \eqref{liminf property} and \eqref{psi decreasing} implies 
\begin{equation*}
    \liminf_{t\to \infty} t^{n}\psi_{\uz}^{1\times n}(t)=0\, ,
\end{equation*}
so $\uz \not \in Bad_{n}$. Similarly \eqref{limsup property} and \eqref{psi decreasing} implies $\limsup_{t\to \infty} t^{n}\psi_{\uz}^{1\times n}(t)\leq c$, so $\uz \in Di_{n}(c)$.
\end{proof}

Proposition~\ref{pro} does not imply that $\uz \not \in Sing_{n}$. For example, if $n\ge 3$ taking $\uz=(\xi_{1},\xi_{2}, \xi_{1}+\xi_{2}, 0, \dots, 0)\in\R^{n}$ then $\uz \in Sing_{n}$. However, the following Theorem enables us to say that this does not happen for a significantly large subset of $\R^{n-2}$ of ``additional coordinates''. In fact, we can ensure the induced 
Dirichlet constant is not smaller than the one of $\ux$ for generic vectors
of $\R^{n-2}$.

\begin{theorem} \label{thm3}
	Let $n\ge 4$ be an integer. Take any $\ux\in\R^2$ as in Theorem~\ref{thm2} derived from any small enough $c\in(0,c(n))$ as in Theorem~\ref{thm1}. Then for a set 
	\[
	\mathcal{Y}=\mathcal{Y}_{n,\ux}\subseteq \R^{n-2}
	\]
	of positive $n-2$ dimensional Lebesgue measure consisting of $\zeta_3,\ldots,\zeta_n$, the vector 
	$\uz=(\xi_1,\xi_2,\zeta_3,\ldots,\zeta_n)\in\R^n$ satisfies
	\[
	\Theta^{1\times n}(\uz)=\limsup_{t\to\infty} t^n \psi_{\uz}^{1\times n}(t)=\limsup_{t\to\infty} t^n \psi_{\ux}^{1\times 2}(t)= c,
	\]
    or equivalently $\uz\in DI_n(c)$.
\end{theorem}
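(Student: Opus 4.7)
The plan is to combine Proposition~\ref{pro}, which already gives the upper bound $\Theta^{1\times n}(\uz)\le c$ for every $\uz=(\xi_1,\xi_2,\zeta_3,\ldots,\zeta_n)$, with a Borel--Cantelli argument that produces a matching lower bound on a set of $(\zeta_3,\ldots,\zeta_n)$ of positive $(n-2)$-dimensional Lebesgue measure. Without loss of generality I will restrict the tail coordinates to the unit cube $[0,1]^{n-2}$ and aim to show the complementary inequality $\Theta^{1\times n}(\uz)\ge c$ for Lebesgue-almost every choice.

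From the construction recalled after Theorem~\ref{thm2}, there is a subsequence of best-approximation heights $(t_k)_{k\ge 1}$ of $\ux$, with $t_k\sim a_{2k+1}$, along which $t_k^n\psi_{\ux}^{1\times 2}(t_k)\to c$. The recursion $a_{2k+1}=a_{2k}^{M_k}$ with $M_k\to\infty$ makes $t_k$ grow super-geometrically, so $\sum_k t_k^{-1}<\infty$. Write $c_k:=\psi_{\ux}^{1\times 2}(t_k)=(c+o(1))t_k^{-n}$. For each $\eta\in(0,c)$ and $k\in\N$ define the bad set
\begin{equation*}
B_k^{(\eta)}=\Bigl\{(\zeta_3,\ldots,\zeta_n)\in[0,1]^{n-2}\,:\,\exists\,\bb\in\Z^{n+1},\ 1\le \|\hat\bb\|_\infty\le t_k,\ (b_3,\ldots,b_n)\ne\mathbf 0,\ |\bb\cdot\uz^{\ast}|<(c-\eta)t_k^{-n}\Bigr\}.
\end{equation*}
The key observation is that any $\bb$ with $(b_3,\ldots,b_n)=\mathbf 0$ is a ``lift'' of an integer vector approximating $\ux$ and hence produces $|\bb\cdot\uz^\ast|\ge\psi_{\ux}^{1\times 2}(t_k)=c_k>(c-\eta)t_k^{-n}$ for $k$ large. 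Consequently, on the complement of $B_k^{(\eta)}$ one has $t_k^n\psi_\uz^{1\times n}(t_k)\ge c-\eta$.

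To bound $\mu(B_k^{(\eta)})$ I will use Fubini: for a fixed $\bb$ with $M:=\max_{3\le i\le n}|b_i|\ge 1$, the set of $(\zeta_3,\ldots,\zeta_n)\in[0,1]^{n-2}$ for which the distance of $b_1\xi_1+b_2\xi_2+\sum_{i\ge 3}b_i\zeta_i$ to $\Z$ is less than $(c-\eta)t_k^{-n}$ has measure at most $2(c-\eta)/(t_k^n M)$. Counting $\le (2t_k+1)^2$ choices of $(b_1,b_2)$ and $\lesssim M^{n-3}$ choices of $(b_3,\ldots,b_n)$ attaining the maximum value $M$, then summing $M=1,\ldots,t_k$, yields
\begin{equation*}
\mu(B_k^{(\eta)})\ \lesssim\ \frac{c-\eta}{t_k^n}\cdot t_k^2\cdot \sum_{M=1}^{t_k} M^{n-4}\ \lesssim\ \frac{c-\eta}{t_k}\qquad (n\ge 5),
\end{equation*}
with an extra $\log t_k$ factor when $n=4$. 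The hypothesis $n\ge 4$ enters precisely here to keep the double sum manageable; either bound is summable in $k$ thanks to the super-geometric growth of $t_k$.

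By the first Borel--Cantelli lemma, $\mu(\limsup_k B_k^{(\eta)})=0$, so for a.e.\ $(\zeta_3,\ldots,\zeta_n)\in[0,1]^{n-2}$ the inequality $t_k^n\psi_\uz^{1\times n}(t_k)\ge c-\eta$ holds for infinitely many $k$, forcing $\limsup_{t\to\infty} t^n\psi_\uz^{1\times n}(t)\ge c-\eta$. Intersecting over the countable family $\eta=1/j$ leaves a set $\mathcal Y\subseteq[0,1]^{n-2}$ of full Lebesgue measure on which $\Theta^{1\times n}(\uz)\ge c$; together with Proposition~\ref{pro} this gives the equality claimed by the theorem. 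The main obstacle I anticipate is quantitative rather than structural: the qualitative Borel--Cantelli argument above is robust for all sufficiently small $c$, but producing the explicit closed form $c_n=\tfrac{n-3}{8(n-2)^{3/2}}\sqrt{\pi}\,\Gamma(n-\tfrac12)/\Gamma(n)$ appearing in Theorem~\ref{thm1} will require an accurate accounting of the implied constants in the count of lattice vectors $(b_3,\ldots,b_n)\in\Z^{n-2}$ with prescribed $\ell^\infty$-norm (whence the Gamma-function factors), as well as a precise comparison with $c_k$ on the initial range of $k$ before the asymptotics take over.
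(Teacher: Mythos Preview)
Your overall architecture is the same as the paper's---fix the first two coordinates, show the upper bound via Proposition~\ref{pro}, and control the ``bad'' tails $(\zeta_3,\ldots,\zeta_n)$ by a volume estimate---but there is a genuine error in your Fubini step that breaks the Borel--Cantelli conclusion.

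You claim that for fixed $(b_1,\ldots,b_n)$ with $M=\max_{3\le i\le n}|b_i|\ge 1$, the set of $(\zeta_3,\ldots,\zeta_n)\in[0,1]^{n-2}$ for which the \emph{distance to $\Z$} of $b_1\xi_1+b_2\xi_2+\sum_{i\ge 3}b_i\zeta_i$ is below $\epsilon:=(c-\eta)t_k^{-n}$ has measure at most $2\epsilon/M$. This is off by a factor of $M$. Integrating in the coordinate $\zeta_j$ with $|b_j|=M$, the map $\zeta_j\mapsto b_j\zeta_j$ sends $[0,1]$ onto an interval of length $M$, so the preimage of $\{\,\Vert\cdot\Vert<\epsilon\}$ consists of roughly $M$ intervals each of length $2\epsilon/M$, giving total one--dimensional measure $\asymp 2\epsilon$, not $2\epsilon/M$. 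Equivalently: the bound $2\epsilon/M$ is correct for a \emph{single fixed} $b_{n+1}\in\Z$, but passing to ``distance to $\Z$'' forces you to sum over $O(M)$ (indeed $O(t_k)$) relevant values of $b_{n+1}$, which cancels the gain. With the correct bound one gets
\[
\mu(B_k^{(\eta)})\ \lesssim\ \epsilon\cdot t_k^2\cdot t_k^{n-2}\ =\ (c-\eta),
\]
which is bounded but not summable, so the first Borel--Cantelli lemma does not apply and the full--measure conclusion is unjustified.

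The repair is exactly what the paper does: replace Borel--Cantelli by the trivial observation that if $\mu(B_k^{(\eta)})\le \Delta$ for all large $k$, then $\mu(\liminf_k B_k^{(\eta)})\le \Delta$ (monotone limits), hence on a set of measure at least $1-\Delta$ one has $t_k^n\psi_{\uz}^{1\times n}(t_k)\ge c-\eta$ for infinitely many $k$. This yields only \emph{positive} measure, and only when $\Delta<1$, i.e.\ when $c$ lies below an explicit threshold---this is precisely the origin of the constant $c_n$ in Theorem~\ref{thm1}. So the smallness of $c$ is structural here, not merely a matter of bookkeeping constants as you suggest in your last paragraph.
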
 
\begin{remark}  \label{remarke}
	Presumably the $(n-2)$-dimensional Lebesgue measure of the set $\mathcal{Y}$ is full, however, this is not required for our purposes. 
\end{remark}

As was the case for Theorem~\ref{thm1}, we believe the claim should hold for $n=3$ as well. We will prove Theorem~\ref{thm3} in the next section. Armed with this result we are able to prove our Theorem~\ref{thm1}.

\begin{proof}[Proof of Theorem~\ref{thm1}]
By combining Proposition~\ref{pro} and Theorem~\ref{thm3} it is clear that
\[
\uz\in DI_n(c)\setminus Bad_n= FS_n(c)\, .
\]
Let $\mathcal{Z}_{c}$ denote the set of $\ux \in \R^{2}$ satisfying Theorem~\ref{thm2}. Since $\lambda_{n}(\mathcal{Y})>0$, for any $c\in(0,c_n)$ we have that
\[
\dim_H FS_n \ge \dim_H FS_n(c)  \ge \dim_H \mathcal{Z}_{c}\times \mathcal{Y} \geq  \dim_H(\mathcal{Z}_{c})+ \dim_H \mathcal{Y}  \geq  n-2\, .
\]
The second to last inequality follows from a standard result on the Hausdorff dimension of Cartesian products
\begin{equation}  \label{eq:tric}
    \dim_H A\times B \ge \dim_H A  + \dim_H B 
\end{equation}
for $A,B$ arbitrary sets in Euclidean spaces, see~\cite{tricot}. Hence Theorem~\ref{thm1} is proven modulo Theorem ~\ref{thm3}. 
\end{proof}


\begin{remark}  \label{rrr}
Notice that in the above calculation, we simply use $\dim_H  \mathcal{Z}_{c}\geq 0$. It is here where our lower bound could be improved. 
For $\tau>2$ a parameter, define 
\begin{equation*}
    Sing_{2}(\tau):=\left\{ \ux \in \R^{2}: \limsup_{t\to \infty} t^{\tau}\psi_{\ux}^{1\times 2}(t)=0 \right\}.
\end{equation*}
It can be deduced from~\cite[Theorem 1.4-1.9]{DFSU_singular} by a change of variables
that for any $\tau>2+\sqrt{2}$ we have
\begin{equation*}
     \dim_H Sing_{2}(\tau)=\frac{2}{\tau}.
\end{equation*}
Clearly $\mathcal{Z}_{c} \subseteq Sing_{2}(n-\varepsilon)$ for any $\varepsilon>0$, and so
	\[
	\dim_H  \mathcal{Z}_{c} \leq \frac{2}{n-\varepsilon}.
	\]
 Thus for $n\ge 4>2+\sqrt{2}$ the best we could hope for via this method is
 \begin{equation*}
     \dim_H FS_{n}(c)\geq n-2 + \frac{2}{n}\, .
 \end{equation*}
  On the other hand, a positive lower bound for $\dim_H (\mathcal{Z}_{c})$ can be obtained by a similar variational method as in~\cite[Theorem~2.1]{j3},
 we do not carry it out.

\end{remark}

\subsection{Proof of Theorem~\ref{thm3}} \label{Sect6}

We have to show the Dirichlet constant of many $\uz$  
is at least $c$, the rest is obvious.
We follow ideas from the proof of Moshchevitin \cite[Theorem 12]{ngm} with some twists. Write $\uz=(\xi_{1},\xi_{2},\zeta_{3}, \dots , \zeta_{n})$ and restrict to $(\zeta_{3},\dots, \zeta_{n})$ in the $n-2$-dimensional unit ball $B_{n-2}(\boldsymbol{0},1)\subseteq \R^{n-2}$ with respect to the Euclidean norm (this will be slightly easier to handle than for the maximum norm). Denote by 
\[
\boldsymbol{b}_v=(b_{1,v},b_{2,v},b_{3,v})\in\Z^3, \qquad v\ge 1,
\]
the best approximations with respect to 
$\ux=(\xi_1,\xi_2)$ and derive 
\[
M_v= \max_{j=1,2} |b_{j,v}|= \|\hat{\bb}_{v}\|, \qquad v\ge 1
\] 
and the minimal values
\[
L_v= |b_{1,v} \xi_1 + b_{2,v} \xi_2 + b_{3,v}|, \qquad v\ge 1.
\] 
Clearly $(L_{v})_{v\geq 1}$ is a strictly decreasing sequence and $(M_{v})_{v\geq 1}$ is strictly increasing.
Further, note that we have
\[
\psi_{\ux}^{1\times 2}(t)= L_v, \qquad t\in [M_v, M_{v+1}),
\]
hence
\begin{equation} \label{eq:tT} \limsup_{t\to \infty} t^{n}\psi_{\ux}^{1\times 2}(t) = \limsup_{v\to\infty} L_v M_{v+1}^{n}.
\end{equation}

Denote $\ee_i=(0,0\ldots,0,1,0,\ldots,0)$ the $i$-th canonical base vector
of $\R^{n+1}$. 
To finish the proof, it suffices
to show the following: 

\begin{lemma} \label{lema}
	Let $n\ge 4$ an integer and $\ux=(\xi_1,\xi_2)$ as in Theorem~\ref{thm2}.
	For a positive measure set $\mathcal{Y}\subseteq B_{n-2}(0,1)\subseteq \R^{n-2}$ consisting
	of $\boldsymbol{\gamma}=(\xi_3,\ldots,\xi_n)$, we can choose an infinite subsequence of
	integers $v_r, r\ge 1$, so that 
    letting $T_{v}:=M_{v+1}-1/2$ for $v\ge 1$, we have
      \begin{equation} \label{eq:igno}
    \lim_{r\to\infty} T_{v_r}^n\cdot \psi_{\ux}^{1\times 2}(T_{v_{r}})
    = \limsup_{v\to\infty} T_{v}^n\cdot \psi_{\ux}^{1\times 2}(T_{v})
    =c
    \end{equation}
    i.e. this subsequence realizes the limsup, and 
	the integer best approximation  
	for $\uz=(\ux, \boldsymbol{\gamma})$ at $T_{v_{r}}$ is given as
	\[
	b_{1,_v{r}}\ee_1+  b_{2,v_r}\ee_2+   b_{1,v_r}\ee_{n+1}=
	(b_{1,v_{r}},b_{2,v_r},0,\ldots,0,b_{3,v_r})\in \Z^{n+1}\,, \qquad r\ge 1.
	\]
	Thus the best approximation function at values $T_{v_r}$ equals
	\[
	\psi_{\uz}^{1\times n}(T_{v_{r}})=\psi_{\ux}^{1\times 2}(T_{v_{r}})= L_{v_r},\qquad r\ge 1.
	\] 
\end{lemma}

Observe that since $T_v$ is of the same order as $M_{v+1}$, for $\uz$ as in the lemma,  \eqref{eq:tT}, \eqref{eq:igno} and the corresponding property for $\uz$ readily imply
\[
\Theta^{1\times n}(\uz)\ge\limsup_{v\to\infty} L_v T_{v}^{n}=\limsup_{t\to \infty} t^{n}\psi_{\ux}^{1\times 2}(t),
\]
the reverse inequality is clear by Proposition~\ref{pro}.
Hence, assuming the lemma is true, the claim of Theorem~\ref{thm3} follows.

We prove the preceding lemma via some modification of the method in~\cite{ngm}.

\begin{proof}[Proof of Lemma~\ref{lema}]
 We first note that we may assume
\[
\lim_{v\to\infty} T_v^{n}\psi_{\ux}^{1\times 2}(T_v)=c
\]
and hence ignore \eqref{eq:igno}. Indeed, otherwise we restrict to a subsequence
satisfying \eqref{eq:igno}, which exists by definition of the limsup, and apply the argument below for this subsequence.\par 
We show
that for a positive portion of $B_{n-2}(\boldsymbol{0},1)\subseteq \R^{n-2}$
of vectors $\boldsymbol{\gamma}=(\zeta_3,\ldots,\zeta_n)$,
we have for infinitely many $v$ that
\begin{equation} \label{eq:nega}
\min \left| x_{1}\xi_{1}+x_{2}\xi_{2}+ \sum_{i=3}^{n} x_i \zeta_i - y \right| \ge L_{v}
\end{equation}
with minimum taken over all integer points
\begin{equation} \label{eq:ob}
\boldsymbol{z}= (x_1,\ldots,x_{n},y)\in \Z^{n+1}\setminus \{\boldsymbol{0}\}
\quad \text{ with } \quad 
0<\max_{1\le j\le n} |x_j| \le T_v\, . 
\end{equation}
For $v\ge 1$, denote by $\mathcal{S}_v$ the set of vectors of $\R^{n-2}$
taken over all integer points $\boldsymbol{z}$ obeying \eqref{eq:ob}, i.e.
\begin{equation*}
    \mathcal{S}_{v}:=\left\{ \ug \in B_{n-2}(\boldsymbol{0},1): \underset{0<\max_{1\leq i \leq n}|x_{j}|\leq T_{v}}{\min_{(\boldsymbol{x},y)\in\Z^{n+1}:}}
     \left| x_{1}\xi_{1}+x_{2}\xi_{2}+\sum_{i=3}^{n}x_{i}\zeta_{i} -y\right|\geq L_{v} \right\}.
\end{equation*}
Then our set is the limsup set of the $\mathcal{S}_v$, or its complement
is the set of all vectors that lie only in finitely many $\mathcal{S}_v$, that is, 
\begin{equation} \label{eq:wia}
B_{n-2}(\boldsymbol{0},1)\setminus \mathcal{Y}\subseteq 
\bigcup_{i\ge 1} \bigcap_{j\ge i} \mathcal{S}_j^{c}\subseteq \bigcup_{i\ge 1} \mathcal{S}_i^{c}.
\end{equation}
Assume that any complement 
\[
\mathcal{U}_v:= \mathcal{S}_v^{c}= \left\{ \ug \in B_{n-2}(\boldsymbol{0},1): \underset{0<\max_{1\leq i \leq n}|x_{j}|\leq T_{v}}{\min_{(\boldsymbol{x},y)\in\Z^{n+1}:}}
     \left| x_{1}\xi_{1}+x_{2}\xi_{2}+\sum_{i=3}^{n}x_{i}\zeta_{i} -y\right|< L_{v} \right\} , \quad v\ge 1,
\]
has $n-2$ dimensional Lebesgue measure less than some $\delta$ strictly smaller than the volume of $B_{n-2}(\boldsymbol{0},1)$. 
Then this will also be 
true for $B_{n-2}(\boldsymbol{0},1)\setminus \mathcal{Y}$ by the following easy result.

\begin{proposition}  \label{pp}
	Let $\lambda$ be a measure on a space $\mathcal{T}$.
	Assume $H_i$ are increasing subsets of $\mathcal{T}$, i.e.
	$H_1\subseteq H_2 \subseteq \cdots$. Then
	\[
	\lambda\left(\bigcup_{k=1}^{\infty} H_k\right) = \lim_{k\to\infty} \lambda(H_k).
	\]
\end{proposition}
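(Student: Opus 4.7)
The plan is to prove this by the classical disjointification trick and countable additivity. The key observation is that although the $H_k$ are nested (not disjoint), their union admits a natural decomposition into pairwise disjoint pieces whose partial unions recover the $H_k$ themselves.

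First I would set $D_1 := H_1$ and, for $k \ge 2$, $D_k := H_k \setminus H_{k-1}$. Because the $H_k$ are increasing, the sets $D_k$ are pairwise disjoint (assuming they are measurable, which holds since the $H_k$ are measurable and the measure is defined on a $\sigma$-algebra containing them; I would tacitly assume this standard hypothesis). Moreover, for each finite $N$,
\[
\bigsqcup_{k=1}^{N} D_k = H_N, \qquad \bigsqcup_{k=1}^{\infty} D_k = \bigcup_{k=1}^{\infty} H_k.
\]

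Next, by countable additivity of $\lambda$,
\[
\lambda\left(\bigcup_{k=1}^{\infty} H_k\right) = \sum_{k=1}^{\infty} \lambda(D_k) = \lim_{N\to\infty} \sum_{k=1}^{N} \lambda(D_k) = \lim_{N\to\infty} \lambda(H_N),
\]
where the second equality is the definition of an infinite sum as a limit of partial sums, and the last equality uses finite additivity together with the disjointness identity above. This yields the desired equality.

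The only potential subtlety is that finite additivity applied to $H_N = \bigsqcup_{k=1}^{N} D_k$ requires $\lambda(H_{k-1}) < \infty$ so that $\lambda(D_k) = \lambda(H_k) - \lambda(H_{k-1})$; if some $\lambda(H_{k_0})$ is infinite, then monotonicity gives $\lambda(H_k) = \infty$ for all $k \ge k_0$ and also $\lambda(\bigcup_k H_k) = \infty$, so both sides of the identity are $+\infty$ and the claim is trivial. Thus there is essentially no obstacle: the only ``hard'' step is simply recognizing that one should disjointify and apply countable additivity, which is the standard proof of continuity from below for measures.
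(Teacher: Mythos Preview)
Your proof is correct and is essentially identical to the paper's own argument: the paper also sets $G_j = H_j \setminus H_{j-1}$ (with $H_0 = \emptyset$) and then applies countable additivity to the disjoint decomposition. Your additional remark handling the case $\lambda(H_{k_0}) = \infty$ is a nice touch of extra care not spelled out in the paper.
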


We will require the estimate $\lambda\left(\bigcup_{k=1}^{\infty} H_k\right) \le \lim_{k\to\infty} \lambda(H_k)$ only.

\begin{proof}
	Writing $G_j=H_j\setminus H_{j-1}$ with $H_0=\emptyset$, we see
    \begin{equation*}
	\lambda\left(\bigcup_{k=1}^{\infty} H_k\right)= \lambda\left(\bigcup_{k=1}^{\infty} G_k\right)= \lim_{N\to\infty} \sum_{k=1}^{N} \lambda(G_k)= \lim_{N\to\infty}  \lambda\left(\bigcup_{k=1}^{N} H_k\right)= \lim_{N\to\infty}  \lambda(H_N).
	\end{equation*}
	\end{proof}

In the sequel denote by $\lambda$ the $(n-2)$-dimensional Lebesgue measure of a set $A\subseteq \R^{n-2}$.
As indicated before, we apply Proposition~\ref{pp} to
\[
H_i= \bigcap_{j\ge i} \mathcal{S}_j^{c}= \bigcap_{j\ge i} \mathcal{U}_j \subseteq \mathcal{U}_i, \qquad i\ge 1,
\]
to see via \eqref{eq:wia} that
\[
\lambda(B_{n-2}(\boldsymbol{0},1)\setminus \mathcal{Y})\le \lim_{v\to\infty} \lambda(\mathcal{U}_v).
\]
Assume we have shown that for some uniform $\Delta$ we have
\begin{equation}  \label{eq:IND}
\lambda(\mathcal{U}_v)\le \Delta< \lambda(B_{n-2}(\boldsymbol{0},1)), \qquad v\ge 1.
\end{equation}
Then we conclude that $$\lambda(B_{n-2}(\boldsymbol{0},1)\setminus \mathcal{Y})\le \Delta<\lambda(B_{n-2}(\boldsymbol{0},1))$$ and thus
\[
\lambda(\mathcal{Y}) = \lambda(B_{n-2}(\boldsymbol{0},1)) - \lambda(B_{n-2}(\boldsymbol{0},1)\setminus \mathcal{Y})\ge \lambda(B_{n-2}(\boldsymbol{0},1)) - \Delta > 0,
\]
as desired.

To verify \eqref{eq:IND},
let us estimate $\lambda(\mathcal{U}_v)$ for fixed $v$. For any
 $\ug=(\zeta_3,\ldots,\zeta_n)\in \mathcal{U}_v$ it follows that 
\begin{equation*}
    \underset{0<\max_{1\leq i \leq n}|x_{j}|\leq T_{v}}{\min_{(\boldsymbol{x},y)\in\Z^{n+1}:}}
     \left| x_{1}\xi_{1}+x_{2}\xi_{2}+\sum_{i=3}^{n}x_{i}\zeta_{i} -y\right|< L_{v}\, .
\end{equation*}
Equivalently
\[
x_3 \zeta_3 + \cdots+ x_n \zeta_n \in J_{v}(x_1,x_2,y)
\]
where $J_{v}(.)$ is given as the interval
\[
J_v(x_1,x_2,y)= ( -y-x_1\xi_1-x_2\xi_2-L_v , -y-x_1\xi_1-x_2\xi_2+L_v  ).
\]
Let 
\[
\Omega_v(x_1,\ldots, x_n,y)=
\left\{ (\zeta_3,\ldots,\zeta_n)\in B_{n-2}(0,1)\subseteq \R^{n-2}: \sum_{i=3}^{n}x_{i}\zeta_{i} \in J_{v}(x_1,x_2,y) \right\}.
\]
Observe that $\Omega_v(x_1,\ldots,x_n,y)$ is empty if $|y|>|x_{1}\xi_{1}|+|x_{2}\xi_{2}|+\sum_{i=3}^{n} |x_i \zeta_i|+L_v$.
Fix for now $\varepsilon>0$.
Since $L_v\to 0$ as $v\to\infty$ and as we may take $\xi_1,\xi_2$
arbitrarily small positive numbers (the $\ux$ from Theorem~\ref{thm2} are easily seen to be dense in $\R$), say smaller than $\varepsilon/8$,
and $(\zeta_3,\ldots,\zeta_n)\in B_{n-2}(\boldsymbol{0},1)$, Cauchy-Schwarz inequality 
yields for any $\varepsilon>0$ this is true if
\[
|y|> (\sqrt{n-2}+\varepsilon/2) T_v
\]
as then
\begin{align*}
|y|&> (\sqrt{n-2}+\varepsilon/2) T_v \\
&\ge (\varepsilon/4)\cdot T_v+ \left(\sum_{i=3}^{n} |x_i|^2\right)^{1/2}\left(\sum_{i=3}^{n} |\zeta_i|^2\right)^{1/2}+L_v
\\ &\ge |\xi_1| |x_1| + |\xi_2| |x_2|+ \left(\sum_{i=3}^{n} |x_i|^2\right)^{1/2}\left(\sum_{i=3}^{n} |\zeta_i|^2\right)^{1/2}+L_v
\\
&\ge |x_{1}\xi_{1}|+|x_{2}\xi_{2}|+ \sum_{i=1}^{n} |x_i\zeta_i|+L_v, \qquad v\ge v_0(\varepsilon).
\end{align*}
The above shows that if we let
\begin{equation}  \label{eq:inu}
\Omega_v= \bigcup_{x_1,\ldots,x_n} \bigcup_{y}\; \Omega_v(x_1,\ldots,x_n,y),
\end{equation}
with unions are taken over integers $x_1, \dots, x_{n}$ as in \eqref{eq:ob} and integers $y$ with $|y|\le (\sqrt{n-2}+\varepsilon/2)T_v$. Then
\[
\mathcal{U}_v \subseteq \Omega_v, \qquad v\geq v_0(\varepsilon).
\]
The distance between the parallel planes defining $\Omega_v(x_1,\ldots,x_n,y)$ equals
$2(x_3^2+\cdots+x_n^2)^{-1/2}$.
Thus each $\Omega_v(x_1,\ldots,x_n,y)$ is contained in a cylinder
of radius one and height $2L_v \cdot (x_3^2+\cdots+x_n^2)^{-1/2}$. Denoting
by $\lambda^{\prime}$ the $(n-3)$-dimensional Lebesgue measure, this has volume 
\begin{align} \label{volcume calculation}
\lambda(\Omega_v(x_{1},\ldots,x_{n},y))&= 2\lambda^{\prime}(B_{n-3}(\boldsymbol{0},1))\cdot L_v(x_3^2+\cdots+x_n^2)^{-1/2}\nonumber \\ &\le 2 L_v\cdot\lambda^{\prime}(B_{n-3}(\boldsymbol{0},1)) \cdot\frac{ 1}{\max_{3\le i\le n} |x_i|}. 
\end{align}
Since, assuming $T_v\ge \varepsilon^{-1}$,
there are at most $$2T_{v}(\sqrt{n-2}+\varepsilon/2)+1\le T_{v}(2\sqrt{n-2}+2\varepsilon)$$
integers $y$ in the inner union of \eqref{eq:inu}, in 
total we get
\begin{align*}
\lambda(\mathcal{U}_v) &\le \lambda(\Omega_v)\\ &\le 
\sum_{x_1, x_2} \sum_y \sum_{x_3,\ldots,x_n} \lambda(\Omega_v(x_1,\ldots,x_n,y))\\ &\le
 2(2\sqrt{n-2}+2\varepsilon)L_vT_v \lambda^{\prime}(B_{n-3}(\boldsymbol{0},1)) \cdot  \sum_{x_1, x_2} \sum_{x_3,\ldots,x_n} \frac{ 1 }{ \max_{3\le i\le n} |x_i| }
\end{align*}
for $v\ge v_0(\varepsilon)$.
The outer sum over $x_1, x_2$ gives a factor $\lfloor T_v\rfloor^2\le T_v^2$ by \eqref{eq:ob}. We
split the inner sum into sets where $\max_{3\le i\le n} |x_i|=k\in [1,\lfloor T_v\rfloor]$ is constant, which correspond to $2(n-2)=2n-4$ faces
on a hypercube for each $k$ according to which variable equals $\pm k$, giving
in total $(2n-4)k^{n-3}$ integer points for each $k$ to sum $1/k$ over. 
Hence we may further estimate
\begin{align*}
\lambda(\mathcal{U}_v) &\le 2(2n-4)(2\sqrt{n-2}+2\varepsilon) \lambda^{\prime}(B_{n-3}(\boldsymbol{0},1))\cdot  L_v T_v^3 \cdot \sum_{k=1}^{\lfloor T_v\rfloor} k^{n-4}\\ &\le 2(2n-4)(2\sqrt{n-2}+2\varepsilon) \lambda^{\prime}(B_{n-3}(\boldsymbol{0},1)) \left(\frac{1}{n-3}+\varepsilon_2\right) L_v T_v^{n},\quad v\ge v_1(\varepsilon,\varepsilon_2).
\end{align*}
Here the error term $\varepsilon_2>0$ tends to $0$ as $v\to\infty$, and we used $n\ne 3$ to avoid a logarithmic term appearing in the last sum. 
We may assume $\varepsilon_2< \varepsilon$ for all large $v$.
So if $L_v< c T_v^{-n}$ for any $c\in(0,c_n-\epsilon)$ for 
$\epsilon>0$ some small manipulation of
$\varepsilon$ and $c_n$ satisfies
\[
\lambda(B_{n-2}(\boldsymbol{0},1))> c_n\frac{ 4(2n-4)\sqrt{n-2}\lambda^{\prime}(B_{n-3}(0,1)) }{ n-3 },
\]
or equivalently
\[
c_n < \frac{n-3}{4(2n-4)\sqrt{n-2}}\cdot \frac{\lambda(B_{n-2}(\boldsymbol{0},1))}{\lambda^{\prime}(B_{n-3}(\boldsymbol{0},1))}=
\frac{n-3}{8(n-2)^{3/2}}\cdot \sqrt{\pi}\cdot \frac{ \Gamma(n-\frac{1}{2})}{\Gamma(n)},
\]
then \eqref{eq:IND} holds for some $\Delta< \lambda(B_{n-2}(\boldsymbol{0},1))$,
depending on $c_n$, and all $v\ge v_0(\Delta) = v_0(c_n)$. Hence
the complement contained in $\mathcal{Y}$ has positive $(n-2)$-dimensional
Lebesgue measure. Since
we can choose $\varepsilon>0$, and thus $\epsilon>0$, arbitrarily small we may assume that $c<c_n$ and
the proof of Lemma~\ref{lema} is complete.
\end{proof}


\begin{remark}
	Improvements of $c_n$ can be made by sharpening the contribution $2\sqrt{n-2}+1$ for the number of $y$. Indeed,
	for most $y$ the obtained intersections with the unit ball have a considerably smaller volume than $2L_v \cdot (x_3^2+\cdots+x_n^2)^{-1/2}$, so it seems we can improve by more concise estimates.
\end{remark}

\section{Proof of Theorem~\ref{jia} } \label{Sect7}

\subsection{Outline}
The method of proof is similar to that of Theorem~\ref{thm1}. Namely, the proof is done in two steps:
\begin{enumerate}
\item[Step I:] For small enough $c$, construct $\ux=(\xi_1, \xi_2)$ such that
\[
\limsup_{t\to\infty} t^n \psi_{\ux}^{1\times 2}(\Vert\cdot\Vert,t) = c.
\]
\item[Step II:] Show that for a positive measure set $(\xi_3,\ldots,\xi_n)$ with respect
to $(n-2)$-dimensional Lebesgue measure, the extended vector $\uz=(\xi_1,\ldots,\xi_n)$ satisfies
\[
\Theta^{1\times n}(\uz)=\limsup_{t\to\infty} t^n \psi_{\uz}^{1\times n}(\Vert\cdot\Vert,t) = c
\]
 as well.
\end{enumerate}
 Note that Step I follows immediately, for
any $c\ge 0$ (and $c\in[0,D]$ if $n=2$),
from the aforementioned preprint \cite[Corollary~1]{AginWeiss2024}. Note this follows from the result on  $\psi$-Dirichlet Spectrum, rather than the classical Dirichlet Spectrum, since our approximation function is $t^{n}$. However, as this appeared later than the current article, we keep our original construction. Note that \cite{AginWeiss2024} does not simplify Step II of the proof.

\subsection{Proof of Step I}  \label{st1}
This is the more intricate part of the proof. For this, we use a different construction than for the maximum norm where we employed the construction from~\cite{j2}. 
Rather we follow the proof of~\cite[Theorem~2.1]{j3} but with some notable twists. 

Write $\uxs=(\ux,1)\in\R^{3}$ and use a similar meaning of star for other 
quantities. Conversely denote by a hat over a vector in $\R^{3}$ its restriction to the first
$2$ coordinates by chopping off the last coordinate, i.e. $\widehat{\uxs}=\ux$.

We construct our real vector. 
Let $n<\tau<\mu$ be parameters to be chosen later related by the identity
\begin{equation} \label{eq:IIi}
\tau \mu-1=n\mu \quad \Longleftrightarrow \quad \mu=\frac{1}{\tau-n}.
\end{equation}
Let $\alpha_j, \beta_j, \gamma_j, \delta_j$ be strictly increasing positive integer sequences and a real number $r=r(\Vert\cdot\Vert)>0$ all to be fixed later. 
Derive integers of the form
\[
A_j= 2^{\alpha_j}3^{\gamma_j }, \qquad B_j= 5^{\beta_j }7^{\delta_j},
\]
satisfying as $j\to\infty$
\begin{equation} \label{eq:accto}
A_{j+1}= (1+o(1))r B_j^{\tau}, \qquad B_j = (1+o(1))A_j^{\mu}.
\end{equation}
We first show that such choices are possible. For $A_j$, we use that $\{ \log 2, \log 3\}$ is a $\Q$-linearly independent set (easily seen by applying exponential map to a putative vanishing linear form and using unique prime factorization to disprove it) and the following easy Proposition~\ref{proper}. Similarly 
for $B_j$ we use that $\{ \log 5, \log 7\}$ is $\Q$-linearly independent.

\begin{proposition}  \label{proper}
	Let $a,b,c$ be real numbers with $a,b$ linearly independent over $\Q$. Then ordering 
	the set $\{ k_1 a+k_2 b+c: k_i\in\mathbb{N} \}$ increasingly and denoting
	it by $(z_i)_{i\ge 1}$ we have $|z_{i+1}-z_i|\to 0$.
\end{proposition}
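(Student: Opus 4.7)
The plan is to show that for every $\epsilon>0$ only finitely many gaps $z_{i+1}-z_i$ exceed $\epsilon$, which immediately gives $|z_{i+1}-z_i|\to 0$. Put $S:=\{k_1a+k_2b+c:k_1,k_2\in\mathbb{N}\}$, and work in the setting of the intended application where $a,b>0$, so that $S$ is discrete, bounded below, and unbounded above, making the increasing ordering well-defined. The $\Q$-linear independence of $\{a,b\}$ will be used through two consequences: first, $\alpha:=a/b$ is irrational; second, each $z\in S$ admits a \emph{unique} representation $z=k_1a+k_2b+c$ with $k_1,k_2\in\mathbb{N}$.

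The main step is to produce, for each prescribed $\epsilon>0$, two small positive elements of the additive group generated by $a$ and $b$, having opposite coefficient sign patterns. By irrationality of $\alpha$, the sequence $\{n\alpha\}_{n\ge 1}$ is dense in $[0,1]$ by Kronecker's theorem, so I choose positive integers $p_1,q_1$ with $0<p_1\alpha-q_1<\epsilon/b$ and positive integers $p_2,q_2$ with $0<q_2-p_2\alpha<\epsilon/b$. Multiplying by $b>0$ yields
\[
\delta_1:=p_1a-q_1b\in(0,\epsilon),\qquad \delta_2:=q_2b-p_2a\in(0,\epsilon).
\]

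Now take any $z=k_1a+k_2b+c\in S$ with $z\ge p_2a+q_1b+c$. The uniqueness of representation forces $k_1\ge p_2$ or $k_2\ge q_1$, since otherwise $k_1\le p_2-1$ and $k_2\le q_1-1$ would give $z\le p_2a+q_1b+c-a-b$, contradicting the lower bound on $z$. If $k_2\ge q_1$, then
\[
z+\delta_1=(k_1+p_1)a+(k_2-q_1)b+c\in S;
\]
if instead $k_1\ge p_2$, then
\[
z+\delta_2=(k_1-p_2)a+(k_2+q_2)b+c\in S.
\]
Either way, there is an element of $S$ strictly inside the interval $(z,z+\epsilon)$, so the immediate successor $z'$ of $z$ in the ordering satisfies $z'-z<\epsilon$. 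Since only finitely many $z_i$ fall below the threshold $p_2a+q_1b+c$, and $\epsilon$ was arbitrary, this proves $|z_{i+1}-z_i|\to 0$.

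I do not anticipate any serious obstacle: the argument reduces to Kronecker density of $\{n\alpha\}\pmod 1$ combined with the unique-representation property. The only mild subtlety is ensuring both sign configurations for the small shifts can be realized with positive coefficients $p_j,q_j$, which is precisely what density of $\{n\alpha\}$ near both $0$ and $1$ delivers.
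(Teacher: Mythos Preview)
Your argument is correct and rests on the same Kronecker-density idea as the paper's proof. The paper partitions $[0,1]$ into $K=\lceil 1/\varepsilon\rceil$ subintervals and places a representative of $\{k_1+k_2(b/a)\}$ in each to obtain $\varepsilon$-density on a half-line, whereas you reach the same conclusion more economically by producing just two small positive increments $\delta_1=p_1a-q_1b$ and $\delta_2=q_2b-p_2a$ with opposite coefficient signs; the underlying mechanism is identical.
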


\begin{proof}
	Clearly, we may assume $c=0$ and suppose without loss of generality that $b<a$. We may then equivalently 
	consider 
 \begin{equation*}
     \mathcal{Z}=\{ k_1+k_2\tfrac{b}{a}: k_i\in\mathbb{N} \}
 \end{equation*}
 instead. Since $\tfrac{b}{a}$ is irrational,	Kronecker's Theorem (see e.g. \cite[Section 3.5 Theorem IV]{Cassels}) shows that the set is dense in the unit interval when considering all
	$(k_1, k_2)\in \Z\times \N$. Let $\varepsilon=1/K>0$ with a large integer $K$ and partition $[0,1]$ into $K$ intervals
 \begin{equation*}
 I_t=[(t-1)\varepsilon,t\varepsilon], \qquad 1\le t\le K   
 \end{equation*}
 of length $1/K$. Then for any $I_t$ there exist
	 $(k_1^{\prime}(t), k_2^{\prime}(t))\in\Z\times\N$ so that $k_1^{\prime}(t)+k_2^{\prime}(t)\tfrac{b}{a}\in I_t$. Take
	$T_{\varepsilon}:= \max_t |k_1^{\prime}(t)|$ and consider the set 
 \begin{equation*}
 \mathcal{Z}_{T_{\varepsilon}} = \left\{ (k_{1}(t)+N) + k_{2}(t)\tfrac{b}{a} : 1\leq t\leq K, \, N \in \N_{\geq T_{\varepsilon}} \right\} \subset \mathcal{Z}\, .
\end{equation*}
	It is easy to see that for any $M\in\N_{\geq T_{\varepsilon}}$ the interval $M+I_{t}$ contains a point from $\mathcal{Z}_{T_{\varepsilon}}$.
	This shows that $\mathcal{Z}$ is $\varepsilon$ dense on some interval $[3T_{\varepsilon},\infty)$. Since $\varepsilon>0$ 
	is arbitrary the claim is proved.
	\end{proof}


We can now apply the proposition alternatingly
to
\[
a=\log 2, \, b=\log 3, \, c=\log r, \qquad\quad a=\log 5,\, b=\log 7, \, c=0
\]
respectively to construct iteratively our sequences $\alpha_j, \gamma_j$, which correspond to $k_1=\alpha_{j+1}-\alpha_j\ge 0$ and 
$k_2=\gamma_{j+1}-\gamma_j\ge 0$, resp. $k_1=\beta_{j+1}-\beta_j\ge 0$ and 
$k_2=\delta_{j+1}-\delta_j\ge 0$. 
Thereby, having chosen all $\alpha_., \beta_., \gamma_.,\delta_.$ up 
to index $j$, we construct first pairs $\alpha_{j+1}, \gamma_{j+1}$ and then $\beta_{j+1}, \delta_{j+1}$, and the iterative construction is complete.
So, having chosen $r$, we may assume all the above is defined.

Note that \eqref{eq:accto} implies 
\begin{equation}  \label{eq:true}
A_{j+1}=(1+o(1))r A_j^{\tau \mu}, \qquad B_{j+1} =(1+o(1))r^{\mu} B_j^{\tau \mu}.
\end{equation}
Moreover, again by \eqref{eq:accto}, upon changing initial terms if necessary, we can assume
\begin{equation} \label{eq:vf}
1<A_1 < B_1 < A_2 < B_2 < \cdots.
\end{equation}
Finally let $\ux=(\xi_1, \xi_2)$ with
\[
\xi_1= \sum_{j\ge 1} A_j^{-1 } , \qquad \xi_2= \sum_{j\ge 1} B_j^{-1 }.
\]
We claim that it satisfies the assertions of the theorem upon choosing $r$ appropriately.

For $j\ge 1$ put
\[
\sigma_j := A_1^{-1}+\cdots+A_j^{-1}, \qquad \eta_j:= B_1^{-1}+\cdots+B_j^{-1}.
\]
We clearly have $\sigma_j\to \xi_1$ and $\eta_j\to \xi_2$ as $j\to \infty$, 
and 
\[
F_j= A_j\sigma_j\in \mathbb{N}, \qquad G_j=B_j \eta_j\in\mathbb{N}.
\]
Then since the sequences $(\alpha_j), (\beta_j), (\gamma_j), (\delta_j)$ are strictly increasing
we have $F_j\equiv 1\bmod 2\cdot 3$
and $G_j\equiv 1\bmod 5\cdot 7$, which imply the coprimality assertions
\begin{equation}  \label{eq:witt}
(A_j, F_j)=(F_j,6)=1, \qquad (B_j,G_j)=(G_j,35)=1.
\end{equation}
Let
\[
\vv_j=(A_j,0,-F_j), \qquad 
\w_j=(0, B_j,-G_j).
\]
Note that $\hat\vv_j=(A_j,0)$ and $\hat \w_j=(0,B_j)$.
Hence their norms $\Vert\cdot\Vert$ satisfy
\begin{equation} \label{eq:gl}
\Vert\hat\vv_j\Vert = \Vert \ee_1 \Vert A_j= d_1 A_j, \qquad d_1:=\Vert \ee_1\Vert=\Vert (1,0)\Vert,
\end{equation}
and
\begin{equation}  \label{eq:lg2}
\Vert\hat\w_j\Vert= \Vert \ee_2 \Vert B_j= d_2 B_j \overset{\eqref{eq:accto}}{=}(1+o(1))d_2 A_j^{\mu}, \qquad d_2:=\Vert \ee_2\Vert=\Vert (0,1)\Vert,
\end{equation}
Thus by \eqref{eq:vf} clearly
\begin{equation}  \label{eq:either}
\Vert\hat\vv_j\Vert< \Vert\hat\w_j\Vert< \Vert\hat\vv_{j+1}\Vert, \qquad j\ge j_0.
\end{equation}
Here $j_0$ depends on $d_1, d_2$, thus on the chosen norm.
Then by \eqref{eq:true} moreover 
\begin{align} \label{eq:ti}
|\vv_j\cdot \uxs|&= |A_j \cdot \xi_1 +0\cdot \xi_2 - F_j|= A_j(A_{j+1}^{-1}+A_{j+2}^{-1}+\cdots) \\ &= A_j A_{j+1}^{-1}(1+o(1))  \overset{\eqref{eq:true}}{=}  (1+o(1))r^{-1}A_j^{-(\tau \mu-1) }  \nonumber
\end{align}
and
\begin{align} \label{eq:tz}
|\w_j\cdot \uxs|&= |0\cdot \xi_1 + B_j \cdot \xi_2 - G_j|= B_j(B_{j+1}^{-1}+B_{j+2}^{-1}+\cdots)  \\ 
&= (1+o(1))B_jB_{j+1}^{-1} \overset{\eqref{eq:true}}{=}  (1+o(1))r^{-\mu}\cdot B_j^{-(\tau \mu-1) },  \nonumber
\end{align}
are small linear forms for $j\ge 1$.
We next show 

\begin{lemma}  \label{wichtig!}
	 As soon as $r$ is large enough, all best approximations
	 for $\ux$ of large norm have up to sign one of the following forms 
	 \[
	 \vv_j, \quad \w_j, \quad \vv_j+\w_j,\quad \w_j-\vv_j, \quad \w_j+\vv_{j+1},
	 \quad \vv_{j+1}-\w_{j}.
	 \]
\end{lemma}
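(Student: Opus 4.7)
The plan is to follow the strategy developed in \cite{j3}, exploiting the arithmetic separation built into our construction: the integer $A_j = 2^{\alpha_j} 3^{\gamma_j}$ involves only the primes $\{2,3\}$ while $B_k = 5^{\beta_k} 7^{\delta_k}$ involves only $\{5,7\}$, so $\gcd(A_j, B_k) = 1$ for all $j, k$. Combined with the coprimalities in \eqref{eq:witt}, this should decouple the approximation problems for $\xi_1$ and $\xi_2$ into what are effectively two independent one-dimensional problems. For any candidate best approximation $\bb = (p_1, p_2, q) \in \Z^3$ with $\|\hat\bb\|$ large, I would choose indices $j = j(\bb)$ and $k = k(\bb)$ minimally so that $|p_1| \le A_j$ and $|p_2| \le B_k$ (the degenerate cases $p_1 = 0$ or $p_2 = 0$ reduce trivially to approximation by $\w_k$ or $\vv_j$ alone). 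Using the truncations $\xi_1 = F_j/A_j + \epsilon_1$ with $\epsilon_1 = (1+o(1)) A_{j+1}^{-1}$ and similarly $\xi_2 = G_k/B_k + \epsilon_2$, one would decompose
\[
\bb \cdot \uxs = R + T, \qquad R := \frac{p_1 F_j}{A_j} + \frac{p_2 G_k}{B_k} + q, \qquad T := p_1 \epsilon_1 + p_2 \epsilon_2,
\]
so that $R$ is a rational with denominator dividing $A_j B_k$; either $R = 0$ or $|R| \ge (A_j B_k)^{-1}$.

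The crucial step will be to force $R = 0$. By the minimal choice of $j, k$ and the recursions \eqref{eq:accto}, the tail should satisfy $|T| \lesssim A_j/A_{j+1} + B_k/B_{k+1}$, which by \eqref{eq:true} combined with $\tau \mu - 1 = n\mu$ is comparable in magnitude to $|\vv_j \cdot \uxs| + |\w_k \cdot \uxs|$ from \eqref{eq:ti}, \eqref{eq:tz}. On the other hand, a short calculation using $\mu > 1$ (coming from the large $\tau, \mu$ in \eqref{eq:IIi}) yields $(A_j B_k)^{-1} \gg \max\{|\vv_j \cdot \uxs|, |\w_k \cdot \uxs|\}$. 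Hence a nonzero $R$ would force $|\bb \cdot \uxs| \ge |R| - |T| \gtrsim (A_j B_k)^{-1}$, much larger than the linear form values of the listed candidate best approximations at comparable heights---contradicting that $\bb$ is a best approximation. Absorbing the implicit constants will require taking $r$ sufficiently large in terms of the norm constants $d_1, d_2$ from \eqref{eq:gl}, \eqref{eq:lg2}.

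Once $R = 0$ is secured, the coprimality assertions in \eqref{eq:witt} immediately yield $A_j \mid p_1$ and $B_k \mid p_2$. Writing $p_1 = a A_j$, $p_2 = b B_k$, the vanishing of $R$ fixes $q = -a F_j - b G_k$, so $\bb = a \vv_j + b \w_k$. The minimality of $j, k$ then forces $|a|, |b| \in \{0, 1\}$, leaving only the candidates $\pm \vv_j$, $\pm \w_k$, $\pm(\vv_j \pm \w_k)$. A final comparison of heights via the ordering \eqref{eq:either} with the linear form magnitudes should restrict the admissible index pairs to $k = j$ or to combinations of the form $\vv_{j+1} \pm \w_j$: other pairings would yield vectors whose linear form is dominated (in absolute value) by a listed vector of strictly smaller height, precluding best approximation status. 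This gives exactly the six forms (up to sign) claimed in the lemma.

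The hard part will be the quantitative size comparison in the second paragraph: verifying across all height ranges and all admissible index pairs $(j,k)$ that $(A_j B_k)^{-1}$ strictly exceeds the relevant linear form values, while the tail $|T|$ is simultaneously too small to cancel a nonzero rational $R$. This hinges on the multiplicative spacing provided by $r$ via \eqref{eq:accto}, which is precisely why the hypothesis ``as soon as $r$ is large enough'' in the statement is essential.
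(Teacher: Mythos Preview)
Your arithmetic decomposition $\bb\cdot\uxs = R + T$ is a genuinely different route from the paper's, and the deduction that $R=0$ forces $A_j\mid p_1$, $B_k\mid p_2$ via \eqref{eq:witt} and $\gcd(A_j,B_k)=1$ is correct and elegant. However, the step forcing $R=0$ breaks down precisely when the two coordinates $p_1, p_2$ of $\hat\bb$ are of very different magnitudes, and this is not a technicality that large $r$ can absorb.

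Concretely, take $1\le |p_1|\le A_1$ and $B_{k-1}<|p_2|\le B_k$ with $k$ large. Your rule gives $j=1$, so the tail has a term $|p_1|\cdot|\epsilon_1|\asymp A_1/A_2$, a \emph{fixed} positive number independent of $k$, while $(A_1 B_k)^{-1}\to 0$. Thus $(A_j B_k)^{-1}\ll |T|$ and the inequality $|\bb\cdot\uxs|\ge |R|-|T|$ gives nothing. Worse, even ignoring $|T|$, the bound $(A_1 B_k)^{-1}$ is weaker than the competing value $|\w_{k-1}\cdot\uxs|\asymp B_{k-1}^{-(\tau\mu-1)}$: since $B_k\asymp B_{k-1}^{\tau\mu}$ one has $A_1 B_k \gg B_{k-1}^{\tau\mu-1}$, so no contradiction with best-approximation status is obtained. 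Your claim ``a short calculation using $\mu>1$ yields $(A_j B_k)^{-1}\gg\max\{|\vv_j\cdot\uxs|,|\w_k\cdot\uxs|\}$'' is correct only when $j$ and $k$ are comparable; it fails for $j$ bounded and $k\to\infty$ (and symmetrically).

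This is exactly the case the paper handles by a different mechanism: it fixes a single index $J$ from the height of $\bb$ (so $\Vert\hat\vv_J\Vert\le\Vert\hat\bb\Vert<\Vert\hat\w_J\Vert$, say), and splits into Case~1 ($\bb\in\langle\vv_J,\w_J\rangle_{\R}$) and Case~2 (otherwise). Case~1 is close to your argument with $j=k=J$. Case~2 is dispatched by Minkowski's Second Theorem (Lemma~\ref{lemur}): three linearly independent integer vectors $\vv_J,\w_J,\bb$ would all satisfy $|\cdot\,\uxs|\ll r^{-1}\Vert\hat\w_J\Vert^{-2}$ with heights $\le\Vert\hat\w_J\Vert$, impossible for $r$ large. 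Your purely arithmetic route has no substitute for this step. One could try to repair it by truncating $\xi_1$ at level $J$ rather than at the minimal $j$, but then the divisibility conclusion becomes $A_J\mid p_1$, forcing $p_1=0$ when $|p_1|<A_J$; this collapses back into your ``degenerate'' case and requires separate treatment, so the minimality device that was supposed to give $|a|,|b|\le 1$ no longer applies uniformly.
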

 
\begin{proof}
Let $\bb$ be any best approximation. Then by \eqref{eq:either} 
there is an index $j$
such that either $\Vert\hat\vv_j\Vert\le \Vert \hat\bb\Vert<\Vert\hat\w_j\Vert$ or $\Vert\hat\w_j\Vert\le \Vert \hat\bb\Vert< \Vert\hat\vv_{j+1}\Vert$.  We show that in the first case $\bb=\pm \vv_j$ or $\pm(\vv_j\pm \w_j)$, and in the latter case $\bb=\pm \w_j$ or $\pm(\w_j\pm \vv_{j+1})$. 
Assume the first case, so
\begin{equation} \label{eq:T}
\Vert \hat\vv_j\Vert\le \Vert \hat\bb\Vert< \Vert \hat\w_j\Vert.
\end{equation}
the latter works very similarly by symmetry.
First, observe that since $\bb$ is the best approximation
of norm at least $\Vert\hat\vv_j\Vert$, we know that
\begin{equation}  \label{eq:see}
|\bb\cdot \uxs|\le |\vv_j\cdot \uxs|.
\end{equation}
We distinguish two cases.

Case 1: $\bb$ lies in the two-dimensonal subspace 
of $\R^3$ spanned by $\vv_j, \w_j$, i.e. 
$\bb\in \scp{\vv_j, \w_j}_{\mathbb{R}}\cap \mathbb{Z}^3$. The special form of $A_j, B_j$ and \eqref{eq:witt} imply the following crucial result on integer
vectors in the two-dimensional lattices $\scp{\vv_j, \w_j}_{\mathbb{R}}\cap \mathbb{Z}^3$.

\begin{proposition}  \label{prop}
	For $\vv_j, \w_j$ as above, if a linear combination
	$g\vv_j+h\w_j$ is an integer vector, then 
	in fact $g\in\Z$ and $h\in \Z$. In other words,
	$\scp{\vv_j, \w_j}_{\mathbb{R}}\cap \mathbb{Z}^3=\scp{\vv_j, \w_j}_{\mathbb{Z}}$.
\end{proposition}

\begin{proof}
	Clearly, we must have $g,h\in \mathbb{Q}$. 
	If we write $(p_1/q_1)\vv_j + (p_2/q_2)\w_j$ with $p_i/q_i$
	in lowest terms, then 
	it is clear that $q_1$ must consist of exclusively non-negative integer powers of $2$ and $3$ and $q_2$ of non-negative integer power of $5$ and $7$ to make the first two coordinates $(p_1/q_1)A_j=(p_1/q_1)2^{\alpha_j}3^{\gamma_j}$ resp. $(p_2/q_2)B_j=(p_2/q_2)5^{\beta_j}7^{\delta_j}$ of $g\vv_j+h\w_j$ integers. 
	But then by \eqref{eq:witt}
	clearly the third coordinate $(p_1/q_1)F_j+(p_2/q_2)G_j$ is not an integer unless
	$q_1=q_2=1$.  
\end{proof}

We first show that any integer linear combination $\bb=g\vv_j+h\w_j$ representing the best approximation as above must have $|g|\le 1$. 
Assume $|g|\ge 2$. 
It is easily seen that for any norm on $\R^2$ there is a constant $\theta=\theta(\Vert\cdot\Vert)$
such that $\Vert \w\Vert \le W$ for $\w=(w_1,w_2)\in\R^2$ and $W>0$ 
implies $|w_i|\le \theta W$, $i=1,2$. 
Hence \eqref{eq:T} and the special form of $\vv_j, \w_j$
imply
\[
|g|\le \theta, \qquad |h|\le \theta
\]
for some $\theta$ independent of $j$. On the other hand, since $r,\mu$ are fixed and $A_j\to\infty$, it is clear
from \eqref{eq:ti}, \eqref{eq:tz} and \eqref{eq:accto} that
\[
|\w_j\cdot \uxs| = o(|\vv_j\cdot \uxs|), \qquad j\to\infty. 
\]
Combination implies
\begin{align*}
|\bb \cdot \uxs|= | (g\vv_j+h\w_j) \cdot \uxs| &\ge |g|\cdot |\vv_j\cdot \uxs| - |h|\cdot |\w_j\cdot \uxs| \ge 2 |\vv_j\cdot \uxs| - |h|\cdot |\w_j\cdot \uxs|\\ &\ge
2 |\vv_j\cdot \uxs| - \theta\cdot |\w_j\cdot \uxs| > |\vv_j\cdot \uxs|, \qquad j\ge j_0,
\end{align*}
contradicting \eqref{eq:see}. Hence $|g|\le 1$. Now if $|h|\ge 2$ then 
for any $|g|\le 1$ using \eqref{eq:either} we get 
\[
\Vert \hat{\bb}\Vert= \Vert g \hat{\vv}_j+h\hat{\w}_j\Vert \ge |h|\cdot \Vert \hat{\w}_j\Vert - \Vert \hat{\vv}_j\Vert \ge 2 \cdot \Vert \hat{\w}_j\Vert - \Vert \hat{\vv}_j\Vert > \Vert \hat{\w}_j\Vert
\]
contradicting \eqref{eq:T}. Thus both $g,h$ are among $\{-1,0,1\}$. 
Hence we are left with the vectors of the lemma. 

Case 2: $\bb$ does not lie in the space spanned by $\vv_j, \w_j$.
For this case, we use an easy consequence of Minkowski's Second Convex
Body Theorem.

\begin{lemma}  \label{lemur}
	There exists a constant $c>0$ such that for any $\ux\in \R^2$ and
	any parameter $Q\ge 1$, the system
	\[
	|b_1|\le Q,\qquad |b_2|\le Q,\qquad  |b_1 \xi_1 + b_2 \xi_2+b_{3}| < cQ^{-2}
	\]
	does not have three linearly independent solutions in integer
	vectors $\bb=(b_1,b_2,b_3)$.
\end{lemma}

\begin{proof}
	Consider the integer lattice $\Z^3$ and the box of $(x_1,x_2,x_3)\in \R^3$ 
	with coordinates
	\[
	|x_1|\le Q, \qquad |x_2|\le Q,\quad  |\xi_1 x_1+\xi_2 x_2 + x_3|\leq cQ^{-2}.
	\]
	It has volume $8c$, independent of $Q$ and $\xi_1, \xi_2$. Hence, by Minkowski's Second Convex
	Body Theorem, the product of the induced successive minima 
	is $\ll c$, hence choosing $c$ small enough the third successive minimum is smaller than $1$. This means there
	cannot be three linearly independent integer points within the box, which in turn is equivalent to the claim.
\end{proof}

We first notice that both $\vv_j$ and $\w_j$ induce approximations of order greater than two. By \eqref{eq:lg2}, \eqref{eq:ti}
and as our choice $(\tau \mu-1)/\mu=n>2$, 
we get
\begin{equation} \label{eq:eins}
|\vv_j\cdot \uxs| = (1+o(1))r^{-1} A_j^{-(\tau \mu-1)} \ll r^{-1} B_j^{-n}\ll
r^{-1}\Vert \hat\w_j\Vert^{-n}<  r^{-1}\Vert \hat\w_j\Vert^{-2}
\end{equation}
and for $\w_j$ by \eqref{eq:tz} we have a stronger estimate that also yields
\begin{equation}  \label{eq:zwei}
|\w_j\cdot \uxs| = o(1)\cdot B_j^{-(\tau \mu-1)} = o(\Vert \hat\w_j\Vert^{-n})=  o(\Vert \hat\w_j\Vert^{-2}), \quad j\to\infty.
\end{equation}
Combined with \eqref{eq:either}, \eqref{eq:T}, \eqref{eq:see}, we have 
\[
\max\{ \Vert \hat\bb\Vert, \Vert \hat\vv_j\Vert, \Vert \hat\w_j\Vert \}= \Vert \hat\w_j\Vert, \qquad \max\{|\bb\cdot \uxs|, |\vv_j\cdot \uxs|,|\w_j\cdot \uxs| \} \ll r^{-1}\Vert \hat\w_j\Vert^{-2}.
\]
By the assumptions of Case 2, the three vectors
$\vv_j, \w_j, \bb$ are linearly independent.
So we get a contradiction
to Lemma~\ref{lemur} for $r$ sufficiently large and $Q=\Vert \hat\w_j\Vert$, as soon as $\Vert \hat\w_j\Vert$ is sufficiently large. Hence, in total, Case 2 provides only finitely many best approximations, of small norm, so Lemma~\ref{wichtig!} is proved.
\end{proof}

For studying $\psi_{\ux}^{1\times 2}$,
we may restrict to integer vectors of the form appearing in Lemma~\ref{wichtig!}.
Now $|\w_j \cdot \uxs|$ is much larger than $|\vv_{j+1} \cdot \uxs|$,
and $|\vv_j \cdot \uxs|$ is much larger than $|\w_{j} \cdot \uxs|$ by \eqref{eq:accto}, \eqref{eq:ti}, \eqref{eq:tz}. Hence
by the triangle inequality $\vv_j\pm \w_j$ induce the same approximation quality
as $\vv_j$ up to a factor $1+o(1)$, and similarly, $\w_j\pm \vv_{j+1}$ induce
the same approximation quality up to $1+o(1)$ as $\w_j$, i.e
\begin{equation} \label{eq:41}
|(\vv_j\pm \w_j)\cdot \uxs| = (1+o(1))\cdot |\vv_j \cdot \uxs|, \quad 
|(\vv_{j+1}\pm \w_j)\cdot \uxs| = (1+o(1))\cdot |\w_j \cdot \uxs|.
\end{equation}
Moreover as the norm of $\hat\w_j$
is much larger than the norm of $\hat\vv_j$ and the norm of $\hat\vv_{j+1}$
is much larger than the norm of $\hat\w_j$, by \eqref{eq:accto} and \eqref{eq:gl},
 we have by the triangle inequality that
\begin{equation} \label{eq:51}
\Vert \hat\vv_j\pm \hat\w_j\Vert = (1+o(1)) \Vert \hat\w_j\Vert, \qquad
\Vert \hat\vv_{j+1}\pm \hat\w_j\Vert = (1+o(1)) \Vert \hat\vv_{j+1}\Vert.
\end{equation}
By \eqref{eq:41}, \eqref{eq:51} and
since we have freedom up to a factor $1+o(1)$ in our claim to be proved, 
we may assume that all
best approximations are among $\vv_j, \w_j$.
Now we choose $r$ appropriately. To finish the proof we show that
choosing $\tau, \mu, r$ suitably we have
\begin{equation} \label{eq:frosch}
| \vv_j \cdot \uxs| = (1+o(1))\cdot c\Vert \hat\w_j\Vert^{-n}, \qquad
| \w_j \cdot \uxs| = o(1)\cdot \Vert \hat\vv_{j+1}\Vert^{-n}.
\end{equation}
Assume this is true.
Then indeed partitioning the interval $[\Vert \vv_1\Vert,\infty)$ into consecutive disjoint intervals
of the form $[\Vert \vv_j\Vert, \Vert \w_j\Vert)$ and $[\Vert \w_j\Vert, \Vert \vv_{j+1}\Vert)$ we see
\begin{equation*}
t^n \psi_{\ux}^{1\times 2}(\Vert\cdot\Vert, t)\le c(1+o(1)) \qquad t\to\infty.
\end{equation*}
On the other hand, since there is no better approximation than the one induced by $\vv_j$ for any
$t<\Vert \w_j\Vert$,
for $t_j:=\Vert \w_j\Vert-1$ we have the reverse inequality 
\begin{equation*}
t_j^n \psi_{\ux}^{1\times 2}(\Vert\cdot\Vert,t_j)\ge c(1+o(1)) \qquad j\to\infty.
\end{equation*}
Combining proves the equality
\[
\limsup_{t\to\infty} t^n \psi_{\ux}^{1\times 2}(\Vert\cdot\Vert, t) = c.
\]
For the left identity of \eqref{eq:frosch}
equivalently by \eqref{eq:lg2}, \eqref{eq:ti} we need
\[
(1+o(1))r^{-1}A_j^{-(\tau \mu-1) }= c\cdot d_2^{-n} A_j^{-n\mu},
\]
so we pick
\[
r=  c^{-1} d_2^n A_j^{n\mu-\tau \mu+1} \overset{ \eqref{eq:IIi}}{=}c^{-1} d_2^n.
\]
Similarly, by \eqref{eq:gl}, \eqref{eq:tz} for the right identity of \eqref{eq:frosch} it suffices to have
\[
B_j^{-(\tau \mu-1)}= o(d_1^{-n} A_{j+1}^{-n\tau}).
\]
Since $A_{j+1}^{-n\tau} \asymp
B_{j}^{-n\tau^2}$ by \eqref{eq:accto} and $d_1$ is constant,
it suffices to notice that upon \eqref{eq:IIi} we may choose $\tau<\mu$ so that 
$n\tau^2 < \tau \mu-1$. Indeed it suffices to choose $\tau$ just slightly larger than $n$ so that $\mu=1/(\tau-n)>\tau^2$ which is equivalent to $n\tau^2 < \tau \mu-1$.  
We leave the short calculation to the reader.
We have completed step I.
Assuming $c$ is small enough, 
we apply Step II below.

 \subsection{Proof of Step II}  \label{st2}
The proof of step II works analogously to the special case of maximum norm
in Section \ref{Sect6}.
Our norm $\Vert\cdot\Vert$ on $\R^n$ induces a projected norm $\Vert\cdot\Vert^{\prime}$ on $\R^2$ via
\begin{equation}  \label{eq:pno}
\Vert (x_1, x_2) \Vert^{\prime}:= \Vert (x_1,x_2,0,\ldots,0) \Vert.
\end{equation}
Then for this norm $\Vert\cdot\Vert^{\prime}$ we apply step I to get $\ux=(\xi_1, \xi_2)$ with the property
\[
\limsup_{t\to\infty} t^n \psi_{\ux}^{1\times 2}(\Vert\cdot\Vert^{\prime}, t) = c.
\]
Let $\bb_v=(b_{1,v}, b_{2,v}, b_{3,v})$ be the integer
vector sequence constructed in Step I.

By construction of $\Vert\cdot\Vert^{\prime}$,
it is again clear by the same argument as in Proposition~\ref{pro}
that any vector $\uz=(\xi_1,\ldots,\xi_n)\in \R^n$
extending $\ux$ has Dirichlet constant at most $c$ with respect to $\Vert\cdot\Vert$,
i.e.
\[
\limsup_{t\to\infty} t^n \psi_{\uz}^{1\times n}(\Vert\cdot\Vert, t) \le
\limsup_{t\to\infty} t^n \psi_{\ux}^{1\times 2}(\Vert\cdot\Vert^{\prime}, t) = c.
\]
Indeed, we can choose the embedded integer vectors $\iota(\bb_v)$
via the embedding
\[
\iota: (y_1, y_2,y_3)\to (y_1,y_2,0,\ldots,0,y_3)
\] 
from $\R^3$ to $\R^{n+1}$, which
by \eqref{eq:pno} induce the same heights and approximation quality
\[
\Vert \widehat{\iota(\bb_v)}\Vert=\Vert \hat\bb_v\Vert^{\prime}, \qquad |\iota(\bb_v)\cdot \uz^{\ast}|=|\bb_v\cdot \uxs |
\]
for any $\bb_v$, with hat notation
according to \eqref{eq:hatnot} on the respective spaces.
For the non-trivial lower estimate 
\[
\limsup_{t\to\infty} t^n \psi_{\uz}^{1\times n}(\Vert\cdot\Vert, t)\ge c
\]
with respect to $\Vert\cdot\Vert$, 
define
\[
L_v=|\iota(\bb_v)\cdot \uz^{\ast}|=|\bb_v\cdot \uxs |=|b_{1,v} \xi_1 + b_{2,v} \xi_2 + b_{3,v}|, \quad M_v=M_v(\Vert\cdot\Vert)=\Vert \hat\bb_v\Vert^{\prime}=\Vert \widehat{\iota(\bb_v)}\Vert
\]
and $T_v=M_v-1/2$ as in the proof for the maximum norm 
but where $M_v, T_v$ are now defined with respect to our norm $\Vert\cdot\Vert$.
 Then by the equivalence of norms on $\R^n$, for any integer vector of norm $\Vert (x_1,\ldots,x_n)\Vert\le T_v$
we still have
\[
\max_{1\le i\le n} |x_i|\le C(\Vert\cdot\Vert) \cdot T_v
\]
for some constant $C$ depending on the norm only. We replace \eqref{eq:ob} by this modified inequality and follow the proof of the special case Lemma~\ref{lema} where the maximum norm was treated. The twist by the constant $C$ will result in some altered value of $c_n$, depending on the chosen norm, for the same conclusion. 
We omit its explicit calculation in dependence of~$C$.

\section{Proof of Theorem~\ref{thm5}} \label{Sect8}
 We point out again that our proof below can be 
 significantly shortened
 using the preprint~\cite[Corollary~1]{AginWeiss2024}
which appeared later. We detail the parts that can be simplified in the proof below. In short, when taking into account~\cite{AginWeiss2024},
the main new substance still needed is Lemma~\ref{lemme} below. 

\subsection{Proof excluding linear independence claims} \label{sec61}

We proceed as in~\S~\ref{st1}  to construct the first two elements of the first line of our matrix $\Omega$ by $\ux=(\xi_1,\xi_2)=(\Omega_{1,1}, \Omega_{1,2})$, upon 
modifying the construction so that \eqref{eq:IIi} is replaced by
\[
\tau \mu -1 = \frac{n}{m} \mu \overset{\eqref{matrix dimension condition}}{\ge}  2\mu.
\]
Define $\psi_{\ux}^{1\times 2}(\Vert\cdot\Vert_1^{\prime}, t)$ with respect to the restriction $\Vert\cdot\Vert_1^{\prime}$
of $\Vert\cdot\Vert_1$ to $\R^2$ in the integer vector $\bb$, as in \eqref{eq:pno}.
Then by the same arguments of the proof of Theorem~\ref{jia} we will have
\begin{equation} \label{eq:fichte}
\limsup_{t\to\infty} t^{n/m} \psi_{\ux}^{1\times 2}(\Vert\cdot\Vert_1^{\prime}, t)= c.
\end{equation}
Hereby we use our assumption $n/m\ge 2$ for Case 2, based on Minkowski's Second Convex Body Theorem, to work analogously.

Now take the matrix $V\in \R^{m\times 2}$ with all $m$ lines equal to $\ux$, i.e.
\[
V_{j,1}= \xi_{1}, \qquad V_{j,2}=\xi_{2}, \qquad 1\leq j\le m.
\]
This matrix gives rise to an approximation function $\psi_{V}^{m\times 2}(\Vert\cdot\Vert_1^{\prime}, \Vert\cdot\Vert_2, t)$ with norms defined as above. First, assume for simplicity that 
$\Vert\cdot\Vert_2=\Vert\cdot\Vert_{\infty}$ is the maximum norm on $\R^m$.
We claim that
\begin{equation}  \label{eq:teir}
\psi_{V}^{m\times 2}(\Vert\cdot\Vert_1^{\prime}, \Vert\cdot\Vert_2, t)=\psi_{\ux}^{1\times 2}(\Vert\cdot\Vert_1^{\prime}, t), \qquad t>0.
\end{equation}
Indeed to see
\begin{equation} \label{eq:einer}
\psi_{V}^{m\times 2}(\Vert\cdot\Vert_1^{\prime}, \Vert\cdot\Vert_2, t)\ge \psi_{\ux}^{1\times 2}(\Vert\cdot\Vert_1^{\prime}, t)
\end{equation}
for all $t$, note that
for arbitrary $\bb=(b_1,\ldots,b_{m+2})\in\Z^{m+2}$ and for $V^{\ast}\in \R^{m\times (m+2)}$ obtained by glueing $V$ to the $m\times m$ identity matrix to the right, the induced value
\[
\Vert V^{\ast}\cdot \bb\Vert_2=
\Vert V\cdot \widehat{\bb}+ \tilde{\bb}\Vert_2=
\Vert V\cdot (b_1,b_{2}) + (b_{3},\ldots,b_{m+2})\Vert_2
\]
with vectors interpreted as 
column vectors, is at least
as large as the modulus of any entry, since we are considering $\|\cdot\|_{2}=\|\cdot\|_{\infty}$ for now. 
But for the first entry this equals $|(b_1,b_2)\cdot \ux +b_3|=| \uxs\cdot (b_1,b_2,b_3)|$. Taking the minimum over $\bb$  as in the definition of $\psi_V^{m\times 2}$ shows the inequality \eqref{eq:einer}.
Conversely, if we extend arbitrary $\bb=(b_1, b_2, b_3)\in\Z^3$ into $\Z^{m+2}$ via
$\eta(\bb):=(b_1,b_2,b_3,\ldots,b_{3})$ with last $m$ coordinates 
equal to $b_3$, 
by choice of maximum norm, we have
$\Vert V^{\ast}\cdot \eta(\bb)\Vert_2 = |\uxs \cdot (b_1,b_2,b_3)|$, so passing to the minimum
over $\bb$ again we get the reverse inequality
\begin{equation}  \label{eq:zweier}
\psi_{V}^{m\times 2}(\Vert\cdot\Vert_1^{\prime}, \Vert\cdot\Vert_2, t)\le \psi_{\ux}^{1\times 2}(\Vert\cdot\Vert_1^{\prime}, t).
\end{equation}
Combining \eqref{eq:fichte}, \eqref{eq:teir} we get
\[
\limsup_{t\to\infty} t^{n/m} \psi_{V}^{m\times2}(\Vert\cdot\Vert_1^{\prime}, \Vert\cdot\Vert_2, t)=\limsup_{t\to\infty} t^{n/m} \psi_{\ux}^{1\times 2}(\Vert\cdot\Vert_1^{\prime}, t)= c.
\]
We note that
up to this point, the claim alternatively follows directly from the aforementioned preprint~\cite[Corollary~1]{AginWeiss2024}. \par 
To finish the proof, we show that for a positive measure set of $B\in \R^{m\times (n-2)}$ inducing $\Omega=(V,B)$ via putting it to the right of $V$,
we have
\[
\limsup_{t\to\infty} t^{n/m} \psi_{\Omega}^{m\times n}(\Vert\cdot\Vert_1, \Vert\cdot\Vert_2, t)=\limsup_{t\to\infty} t^{n/m} \psi_{V}^{m\times 2}(\Vert\cdot\Vert_1^{\prime}, \Vert\cdot\Vert_2, t).
\] 
Again the inequality
\begin{equation} \label{eq:frit}
\limsup_{t\to\infty} t^{n/m} \psi_{\Omega}^{m\times n}(\Vert\cdot\Vert_1, \Vert\cdot\Vert_2, t)\le \limsup_{t\to\infty} t^{n/m} \psi_{V}^{m\times 2}(\Vert\cdot\Vert_1^{\prime}, \Vert\cdot\Vert_2, t)
\end{equation}
is easy to check for arbitrary $\Vert\cdot\Vert_2$ and extensions $\Omega\in\R^{m\times n}$
of $V$ by an arbitrary matrix $B\in \R^{m\times (n-2)}$. Indeed, for a sufficient estimate
\[
\psi_{\Omega}^{m\times n}(\Vert\cdot\Vert_1, \Vert\cdot\Vert_2, t)\le \psi_{V}^{m\times 2}(\Vert\cdot\Vert_1^{\prime}, \Vert\cdot\Vert_2, t), \qquad t>0
\]
we embed the integer best approximations of $V$ 
by adding $0$ coordinates via
\[
\sigma:\;\bb=(b_1, b_2, \ldots, b_{m+2})\to (b_1,b_2,0,\ldots,0,b_3,\ldots,b_{m+2})\in\Z^{m+n}.
\]
These induce the same
norm $\Vert \widehat{\sigma(\bb)} \Vert_1=\Vert \widehat{\bb} \Vert_1^{\prime}$ 
where
\[
\widehat{\sigma(\bb)}= (b_1,b_2,\ldots,b_n)=(b_1,b_2,0,\ldots,0), \qquad \widehat{\bb}= (b_1,b_2),
\]
with hat again as in \eqref{eq:hatnot}
on the respective spaces. With $\Omega^{\ast}$ derived from $\Omega$ likewise as for $V^{\ast}$ (i.e. add the identity matrix to the right hand side of $\Omega$),
they further induce the same approximation qualities $\Vert \Omega^{\ast}\cdot \sigma(\bb) \Vert_2=\Vert V^{\ast}\cdot \bb \Vert_2$ since in fact 
$\Omega^{\ast}\cdot \sigma(\bb) =V^{\ast}\cdot \bb$, proving \eqref{eq:frit}.
For the reverse estimate
\begin{equation}  \label{eq:trif}
\limsup_{t\to\infty} t^{n/m} \psi_{\Omega}^{m\times n}(\Vert\cdot\Vert_1, \Vert\cdot\Vert_2, t)\ge \limsup_{t\to\infty} t^{n/m} \psi_{V}^{m\times 2}(\Vert\cdot\Vert_1^{\prime}, \Vert\cdot\Vert_2, t)=c,
\end{equation}
we use a generalisation of Step II from~\S~\ref{st2} for $m>1$ 
as in Moshchevitin~\cite[Theorem~12]{ngm}. We obtain that for small enough $c$ and a positive measure set of remaining matrices $B\in \R^{m\times (n-2)}$ with respect to $m(n-2)$ dimensional Lebesgue measure
when added to the right of $V$ to obtain $\Omega$, we have \eqref{eq:trif}.
If $m=1$ then 
we require the condition $n\ne m+2=3$.
Moreover, the assumption $n\ge 2m$ is vital for the volume estimates.
See Lemma~\ref{lemme} below for a generalisation. The proof for $\Vert\cdot\Vert_2$ the maximum 
norm is complete.

Now let the norm $\Vert\cdot\Vert_2$ be arbitrary. 
Again we prove \eqref{eq:einer}, \eqref{eq:zweier}, \eqref{eq:frit}, \eqref{eq:trif}. The critical estimates where the norm $\Vert\cdot\Vert_2$ is relevant for the argument are \eqref{eq:einer}, \eqref{eq:zweier}.
First notice that by the special form of $V$, if $\bb$ is not of the form 
$\bb=(b_1,b_2,b_3,b_3,\ldots,b_3)\in \Z^{m+2}$ 
then some entry of $V^{\ast}\cdot \bb$ will be of modulus at least $1/2$, 
so $\Vert V^{\ast}\cdot \bb\Vert_{\infty}\ge 1/2$,
hence by equivalence of norms, we deduce 
\[
\Vert V^{\ast}\cdot \bb\Vert_2 \gg 1
\]
with some absolute constant depending on the norm $\Vert\cdot\Vert_2$ only.
Hence we only need to take into account vectors of the form $\bb=(b_1,b_2,b_3,b_3,\ldots,b_3)$ when studying $\psi_{V}^{m\times 2}$. But then the 
vector $V^{\ast}\cdot \bb\in \R^m$ lies on the line $\lambda\cdot (1,1,\ldots,1)\in \R^m$ and has norm $|\lambda|\cdot \Vert (1,1,\ldots,1)\Vert_2$ proportional to $|\lambda|$. Hence
we have to minimize $|\lambda|$. In~\S~\ref{st1}, see in particular Lemma~\ref{wichtig!}, we showed that this is achieved when the subvector $(b_1,b_2,b_3)$ is essentially 
of the form $\vv_j=(A_j,0,-F_j)$ and $\w_j=(0,B_j,-G_j)$ for $A_j, B_j, F_j, G_j$ defined in the same section (as shown in~\S~\ref{st1}, for the other possible vectors of Lemma~\ref{wichtig!} we may only improve the approximation quality by a negligible factor $1+o(1)$
as $j\to\infty$). These arguments show that the best integer approximations for $V$ are essentially of the form
\begin{equation} \label{eq:vecin}
\boldsymbol{c}_j:= (A_j, 0 , -F_j, \ldots , -F_j ), \qquad \boldsymbol{d}_j:= (0, B_j , -G_j, \ldots , -G_j )
\end{equation}
so that for $\vv_j, \w_j\in\Z^3$ as before, we get error vectors
\[
V^{\ast}\cdot \boldsymbol{a}_j=  V\cdot \begin{pmatrix}
A_j \\ 0
\end{pmatrix}
- F_j\cdot \begin{pmatrix}
1 \\ 1 \\ \vdots \\ 1
\end{pmatrix}=\vv_j\cdot \uxs\cdot \begin{pmatrix}
1 \\ 1 \\ \vdots \\ 1
\end{pmatrix}\in \R^m
\]
and
\[ V^{\ast}\cdot \boldsymbol{b}_j= V\cdot \begin{pmatrix}
0 \\ B_j
\end{pmatrix}- G_j\cdot \begin{pmatrix}
1 \\ 1 \\ \vdots \\ 1
\end{pmatrix}=\w_j\cdot \uxs\cdot \begin{pmatrix}
1 \\ 1 \\ \vdots \\ 1
\end{pmatrix}\in \R^m.
\]
Then for $\Gamma:=\Vert(1,1,\ldots,1)\Vert_2$ they have norms
\[
\left\Vert V^{\ast}\cdot \boldsymbol{c}_j\right\Vert_2= 
\vert \vv_j\cdot \uxs|\cdot \Gamma, \qquad \left\Vert V^{\ast}\cdot \boldsymbol{d}_j\right\Vert_2= 
\vert \w_j\cdot \uxs|\cdot \Gamma.
\]
Thus we have to adjust the proof of the special case by the constant 
factor $\Gamma$ in the calculation, so that
in order to finalize Step I we now need to pick
\[
r=c^{-1}\Gamma^{-1} d_2^n,\qquad d_2= \Vert \ee_2\Vert_1^{\prime}=\Vert (0,1)\Vert_1^{\prime}
\]
to get \eqref{eq:zweier}. The reverse inequality \eqref{eq:einer} holds as well as we noticed
that the vectors in \eqref{eq:vecin} are (essentially) 
the integer best approximations.
We observed that \eqref{eq:frit} holds independent of $\Vert\cdot\Vert_2$. 
Note that again
this step follows immediately  via a different proof from the preprint~\cite[Corollary~1]{AginWeiss2024}. 

\par 
 Finally, Step II is to again show the reverse inequalities \eqref{eq:trif} hold. This is done analogously to Lemma~\ref{lema} but in
higher dimension, building up on ideas of Moshchevitin~\cite[Theorem~12]{ngm}.
Again the claim is essentially irrespective of the norm $\Vert\cdot\Vert_2$. We state the according analogue of Lemma~\ref{lema} that we will prove analogously but without effort to make constants effective. 

\begin{lemma} \label{lemme}
	Let $m,n$ be positive integers with $n\ge 2$, and $(m,n)\ne (1,3)$. Let $\Vert\cdot\Vert_1$ and $\Vert\cdot\Vert_2$
	be any norms on $\R^n$ and $\R^m$ and $\Vert\cdot\Vert_1^{\prime}$ be the restriction of $\Vert\cdot\Vert_1$ to $\R^2$ as in \eqref{eq:pno}.
	Further, let  
	$V\in \R^{m\times 2}$ be any matrix with
	\begin{equation} \label{V property}
	\limsup_{t\to\infty} t^{n/m} 
    \psi_{V}^{m\times 2}(\Vert\cdot\Vert_1^{\prime}, \Vert\cdot\Vert_2, t)= c
	\end{equation}
	for small enough $c\in [0,c_0(m,n,\Vert.\Vert_1,\Vert.\Vert_2)]$, with $c_0$ depending on $m,n$ and the norms. Let $\bb_v\in\Z^{m+2}$ be the associated sequence
	of best approximations,  and $\hat\bb_v= (b_{v,1}, b_{v,2})$ of norms and approximation qualities 
	\[
	M_v:=\Vert \hat\bb_v \Vert_1^{\prime}, \qquad L_v:= \Vert V^{\ast}\cdot \bb_v\Vert_2.
	\]
	Then, for a positive measure set $\mathcal{Y}\subseteq B_{m(n-2)}(\boldsymbol{0},1)\subseteq \R^{m(n-2)}$ consisting
	of matrices $B\in \R^{m\times (n-2)}$, we have that for an infinite subsequence of
	integers $(v_r)_{r\in\N}$, letting $T_{v}:=M_{v+1}-1/2$ for $v\ge 1$, we have
    \begin{equation} \label{eq:ignore}
    \lim_{r\to\infty} T_{v_r}^{n/m} \psi_V^{m\times 2}(T_{v_r}) 
    = \limsup_{v\to\infty} T_{v}^{n/m}\cdot \psi_{V}^{m\times 2}(T_{v})
    =c
    \end{equation}
    i.e. realising the limsup, 
	and the integer best approximation  
	for $\Omega=(V,B)\in \R^{m\times n}$ at $T_{v_r}$ is for $r\ge 1$ given as
	\[
	b_{1,v_r}\ee_1+  b_{2,v_r}\ee_2+   b_{3,v_r}\ee_{n+1}+\cdots+b_{m+2,v_r}\ee_{n+m}=
	(b_{1,v_r},b_{2,v_r},0,\ldots,0,b_{3,v_r},\ldots,b_{m+2,v_r})\in \Z^{n+m}. 
	\]
	Thus, the best approximation function at values $T_{v_r}$ equals
	\begin{equation} \label{eq:forti}
	\psi_{\Omega}^{m\times n}(\Vert\cdot\Vert_1, \Vert\cdot\Vert_2,T_{v_r})=\psi_{V}^{m\times 2}(\Vert\cdot\Vert_1^{\prime}, \Vert\cdot\Vert_2, T_{v_r})= L_{v_r},\qquad r\ge 1.
	\end{equation}
\end{lemma}

\begin{proof}
   First assume $n\ne m+2$, this condition will be removed later in the case of $m>1$. 
    Fix $V \in \R^{m\times 2}$. 
    For the same reason as in the proof of Lemma~\ref{lema}, we may assume
    \[
    \lim_{v\to\infty} T_v^{n/m}\psi_{V}^{m\times 2}(T_v)=c
    \]
    and ignore \eqref{eq:ignore}.
    As in the proof of Lemma~\ref{lema}, we construct the set $\mathcal{Y}$ to be composed of matrices $B\in \mathcal{Y}\subseteq \R^{m\times(n-2)}$ such that the matrices $\Omega=(V,B)\in \R^{m\times n}$ satisfy
    \[
	\psi_{\Omega}^{m\times n}(\Vert\cdot\Vert_1, \Vert\cdot\Vert_2,T_{v})=\psi_{V}^{m\times 2}(\Vert\cdot\Vert_1^{\prime}, \Vert\cdot\Vert_2, T_{v})= L_v.
	\] 
 By the equivalence of norms we have that there exists $C(\|\cdot\|_{1})>0$ dependent only on $\|\cdot\|_{1}$ such that for any $\hat{\xx}\in\R^{n}$
 \begin{equation*}
     \|\hat{\xx}\|_{1}\leq T_{v} \quad \implies \quad \|\hat{\xx}\|_{\infty} \leq C(\|\cdot\|_{1})T_{v}.
 \end{equation*}
 Thus if again $\Omega^{\ast}=(V,B,I)\in \R^{m\times (m+n)}$ is obtained by putting the $m\times m$ identity matrix, I, to the right of $\Omega$ and we let
 \begin{equation*}
     S_{v}:=\left\{ B \in B_{m(n-2)}(\boldsymbol{0},1): \underset{0<\|\hat{\xx}\|_{\infty}\leq C(\|\cdot\|_{1})T_{v}}{\min_{\xx \in \Z^{n+m}}} \left\|(V,B,I)\cdot \xx \right\|_{2} \geq L_{v} \right\}\, ,  
 \end{equation*}
then $\mathcal{Y} \supseteq \limsup_{v\to \infty} S_{v}$. Hence if we can show that the set 
 \begin{equation} \label{Y condition}
    \lambda_{m(n-2)}\left( \liminf_{v\to \infty} S_{v}^{c} \right) \leq \delta < \lambda_{m(n-2)}\left(B_{m(n-2)}(\boldsymbol{0},1)\right)
 \end{equation}
 then clearly $\lambda_{m(n-2)}(\mathcal{Y})>0$. Furthermore, if we show that
 \begin{equation} \label{Svc size}
     \lambda_{m(n-2)}(S_{v}^{c})\leq \delta \qquad v\geq 1,
 \end{equation}
 then applying Proposition~\ref{pp} we have \eqref{Y condition}, so proving \eqref{Svc size} is sufficient to verify $\lambda_{m(n-2)}(\mathcal{Y})>0$. \par 
 Let $z=(z_{3},\dots,z_{n})$, $y=(x_{n+1},\dots, x_{n+m})$ and
 \begin{equation*}
     X_{v}(z,x_{1},x_{2},y)=\left\{ \ug \in B_{m(n-2)}(\boldsymbol{0},1) : \left( \begin{array}{c}
     z_{3}\gamma_{1,1} + \dots + z_{n}\gamma_{1,n-2} \\
     \vdots \quad \quad \vdots \\
     z_{3}\gamma_{m,1}+\dots + z_{n}\gamma_{m,n-2} \end{array} \right) \in J_{v}(x_{1},x_{2},y) \right\}
 \end{equation*}
 where
 \begin{equation*}
     J_{v}(x_{1},x_{2},y):=\prod_{i=1}^{m} (-x_{1}\xi_{1}-x_{2}\xi_{2} -x_{n+i}-C(\|\cdot\|_{2})L_{v}, -x_{1}\xi_{1}-x_{2}\xi_{2} -x_{n+i}+C(\|\cdot\|_{2})L_{v}).
 \end{equation*}
 Here we have again used equivalence of norms, namely that there exists $C(\|\cdot\|_{2})>0$ dependent only on $\|\cdot\|_{2}$ so that for any $X\in\R^{m}$  
 \begin{equation*}
     C(\|\cdot\|_{2})^{-1}\|X\|_{\infty}\leq \|X\|_{2} \leq C(\|\cdot\|_{2})\|X\|_{\infty}.
 \end{equation*}
 Considering one row of $X_{v}(z,x_{1},x_{2},y)$ at a time (denoted $X_{v}^{(i)}(z,x_{1},x_{2},y)$ for $1\leq i \leq m$) we have the same calculation as in \eqref{volcume calculation}, that is
 \begin{equation*}
     \lambda_{n-2}\left(X_{v}^{(i)}(z,x_{1},x_{2},y)\right) \leq 2C(\|\cdot\|_{2})L_{v}\lambda_{n-3}(B_{n-3}(\boldsymbol{0},1))\frac{1}{\|z\|_{\infty}}.
 \end{equation*}
 Hence
 \begin{align*}
     \lambda_{m(n-2)}\left(X_{v}(z,x_{1},x_{2},y)\right) &\leq 2^{m}C(\|\cdot\|_{2})^{m}L_{v}^{m}\lambda_{n-3}(B_{n-3}(\boldsymbol{0},1))^{m}\frac{1}{\|z\|_{\infty}^{m}}\\
     &=C(m,n,\|\cdot\|_{1},\|\cdot\|_{2})L_{v}^{m} \frac{1}{\|z\|_{\infty}^{m}}.
 \end{align*}
 Using similar calculations as appearing in Lemma~\ref{lema} we have
 \begin{align*}
     \lambda_{m(n-2)}\left(S_{v}^{c}\right)& \leq \sum_{0< \max_{i=1,2}|x_{i}|\leq C(\|\cdot\|_{1})T_{v}}\sum_{\|y\|_{\infty}\leq C(\|\cdot\|_{1})T_{v}}\sum_{\|z\|_{\infty}\leq C(\|\cdot\|_{1})T_{v}} \lambda_{m(n-2)}\left(X_{v}(z,x_{1},x_{2},y)\right)\\
     &\leq C(m,n,\|\cdot\|_{1},\|\cdot\|_{2})L_{v}^{m}T_{v}^{m+2}\sum_{\|z\|_{\infty}\leq C(\|\cdot\|_{1})T_{v}}\frac{1}{\|z\|_{\infty}^{m}} \\
     & \leq C'(m,n,\|\cdot\|_{1},\|\cdot\|_{2})L_{v}^{m}T_{v}^{m+2}\sum_{k=1}^{ \lfloor C(\|\cdot\|_{1})T_{v}\rfloor}k^{n-3-m}\\
     &\overset{(n\ne m+2)}{\leq} C''(m,n,\|\cdot\|_{1},\|\cdot\|_{2})L_{v}^{m}T_{v}^{m+2}T_{v}^{1+n-3-m}\\
     &\leq C''(m,n,\|\cdot\|_{1},\|\cdot\|_{2})L_{v}^{m}T_{v}^{n}.
 \end{align*}
 Thus, for any $0<\delta<\lambda_{m(n-2)}\left(B_{m(n-2)}(\boldsymbol{0},1)\right)$, if $L_{v}<cT_{v}^{-\frac{n}{m}}$ then for any  $c\in(0,C'''(m,n,\|\cdot\|_{1}))$ with
 \begin{equation*}
     C'''(m,n,\|\cdot\|_{1},\|\cdot\|_{2}) < \left(\frac{\delta}{C''(m,n,\|\cdot\|_{1},\|\cdot\|_{2})}\right)^{\frac{1}{m}},
 \end{equation*}
 we have that $\lambda_{m(n-2)}\left(S_{v}^{c}\right)<\delta$ as required.

Finally we explain how to avoid the condition $n\ne m+2$ when $m\ge 2$. Instead of extending $V\in \R^{m\times 2}$ directly to the desired $m\times n$ matrix $\Omega$, we iteratively apply Lemma~\ref{lemme}, adding a single column to $V$ at each step. Precisely, suppose $m\geq 2$ and $n=m+2$. Begin with some $V=\tilde{V}_{1}\in\R^{m\times 2}$ satisfying \eqref{V property}. Apply Lemma~\ref{lemme} to construct a set of $m$-dimensional positive measure matrices $\Omega_{1}\in \R^{m\times 3}$ with property \eqref{eq:forti}. Define $\tilde{V}_{2}:=\Omega_{1}$ to be any of the matrices in the the constructed set. Note $\tilde{V}_{2}$ satisfies
\begin{equation*}
\psi_{\tilde{V}_{2}}^{m\times 3}(\|\cdot\|^{''}_{1},\|\cdot\|_{2},T_{v_{r}})=\psi_{\tilde{V}_{1}}^{m\times 2}(\|\cdot\|^{'}_{1},\|\cdot\|_{2},T_{v_{r}})\, ,
\end{equation*}
with $\|\cdot\|^{''}_{1}$ now being the restriction of $\|\cdot\|_{1}$ to $\R^{3}$. So
\begin{equation*}
    \limsup_{t\to\infty} t^{n/m}\psi_{\tilde{V}_{2}}^{m\times 3}(\|\cdot\|^{''}_{1},\|\cdot\|_{2},t)=c\, ,
\end{equation*}
Note $\tilde{V}_{2}\in\R^{m\times 3}$ rather than $\R^{m\times 2}$, but it is readily verified that we can copy the above proof with obvious modifications. So, we construct a set of $m$-dimensional positive measure matrices $\Omega_{2}\in \R^{m\times 4}$ with the desired properties of Lemma~\ref{lemme}. Define $\tilde{V}_{3}:=\Omega_{2}$ for any of the matrices in the above set and repeat. The same argument follows as above. In this way we obtain a sequence of matrices  
\begin{equation*}
    \Omega_{1}=: \tilde{V}_{2} \in \R^{m\times 3}, \Omega_{2}=:\tilde{V}_{3}\in \R^{m\times 4},\ldots, \Omega_{m-1}=:\tilde{V}_{m+1}\in \R^{m\times(m+1)},\Omega_{m}=:\Omega\in \R^{m\times (m+2)}\, .
\end{equation*}    
    For each $2< \tilde{n} \leq m+2$ the extension from $\tilde{V}_{\tilde{n}-2} \in \R^{m\times (\tilde{n}-1)}$ to an $m\times \tilde{n}$ matrix follows readily by copying the above proof of Lemma~\ref{lemme} (with the restriction $n\neq m+2$) with obvious modifications. Note we avoid the condition $\tilde{n}= m+(\tilde{n}-1)$ since $m\geq 2$. Thus, at each step we construct a set of matrices $\Omega_{\tilde{n}-1}\in \R^{m\times \tilde{n}}$ with positive $m$-dimensional measure, each with the desired properties of \eqref{eq:forti}. It is readily checked that the required convergence estimates still work a fortiori as $n\ge \tilde{n}$ and $m$ remains constant. Using Marstrand's slicing lemma \cite{mars} for fibered sets, we increase the Hausdorff dimension by $m$ in every step. There are $m$ steps, hence we arrive at the dimension of at least $m^{2}=m(n-2)$.

\end{proof}

There is nothing essential about $V$ having two columns as the last part of the proof confirms. Note that our proof above avoids the condition $n\ne m+2$ as soon as $m>1$, contrasting the
requirement $n\ge 4$ in Lemma~\ref{lema} where $m=1$. We point out that a similar argument works to generalize the original result in~\cite{ngm}, again when $m>1$. 
All claims of Theorem~\ref{thm5} apart from the linear independence statements are proved.

\subsection{Proof of linear independence claims} \label{brand}

 The first claim (i) on linear independence
 of rows follows from Step II. 
 Aside from the metrical claim, note that
 it suffices to prove linearly independent rows can be arranged for the submatrix $B\in \R^{m\times (n-2)}$ of $\Omega=(V,B)$.
 Now since $n-2\ge m$ by \eqref{matrix dimension condition}, the matrix $B$ does not have fewer columns than rows, 
 so if $B$ does not have the property it is rank deficient. 
 However, as indicated in Section~\ref{Sek3.1}, looking at minors yields that the set of
 rank-deficient matrices has positive codimension and thus
 zero Lebesgue measure. 
 On the other
 hand, we showed that a positive Lebesgue measure set of matrices $B$ gives rise
 to $\Omega$ as in the theorem. Hence removing such rank-deficient matrices still leaves us with a set of positive $m\times (n-2)$-dimensional Lebesgue measure from which we can choose arbitrary $B$.

    For the latter claim (ii) on $\Q$ linearly independent entries,
	note that in the construction of
    Section~\ref{sec61}
 certain entries of $V$ (thus $\Omega$) are identical, so the condition does not hold. 
	
	To fulfil this additional property, let us modify our construction
	by varying signs. First construct the submatrix $V\in \R^{m\times 2}$ as above
	by taking for $A_j, B_j$ as before
	\[
	\Omega_{k,1}=V_{k,1}= \sum_{j=1}^{\infty} \delta_{k,j} A_j^{-1}, \qquad \Omega_{k,2}=V_{k,2}= \sum_{j=1}^{\infty} \delta_{k,j}^{\ast} B_j^{-1}, \quad (1\le k\le m)
	\]
	with sequences $\delta_{k,j}\in\{-1,1\}$, $\delta_{k,j}^{\ast}\in\{-1,1\}$ to be chosen later. When $\delta_{k,j}=\delta_{k,j}^{\ast}=1$ for all $k,j$ we are in the standard case of the previous sections.
	
	When $m=1$,
	all arising matrices/vectors $(V_{1,1}, V_{1,2})=(\xi_1, \xi_2)$ share the properties of the standard case above.
	For $m>1$, we get more general error vectors
	than in \eqref{eq:vecin}, now of the form
	\begin{equation}  \label{eq:errt}
	\tilde{\boldsymbol{c}}_j:= V\cdot \begin{pmatrix}
	A_j \\ 0
	\end{pmatrix}
	- \begin{pmatrix}
	F_{j,1} \\ F_{j,2} \\ \vdots \\ F_{j,m}
	\end{pmatrix}, \qquad \tilde{\boldsymbol{d}}_j:= V\cdot \begin{pmatrix}
	0 \\ B_j
	\end{pmatrix}
	- \begin{pmatrix}
	G_{j,1} \\ G_{j,2} \\ \vdots \\ G_{j,m}
	\end{pmatrix},\qquad j\ge 1,
	\end{equation}
	where
	\[
	F_{j,k}= A_j \sum_{u=1}^{j}  \delta_{k,u} A_{u}^{-1}, \qquad
	G_{j,k}= B_j \sum_{u=1}^{j}  \delta_{k,u}^{\ast} B_{u}^{-1}, \qquad (1\le k\le m),
	\]
	depending on the choice of $\delta_{k,j}, \delta_{k,j}^{\ast}$.
 They again satisfy congruence properties $$F_j\equiv \pm 1\bmod 6 \quad \text{ and} \quad G_j\equiv \pm 1\bmod 35$$ for the same reasons, sufficient for coprimality with $6$ respectively $35$.
	Thus the entries of the remainders $\tilde{\boldsymbol{c}}_j, \tilde{\boldsymbol{d}}_j$ are given as
	\[
	c_{j,k}= A_j \sum_{u=j+1}^{\infty}  \delta_{k,u} A_{u}^{-1}, \qquad
	d_{j,k}= B_j \sum_{u=j+1}^{\infty}  \delta_{k,u}^{\ast} B_{u}^{-1}, \qquad (1\le k\le m).
	\]
	However, since $A_j/A_{j+1}\to 0$ and $B_j/B_{j+1}\to 0$ we again see
	\[
	c_{j,k}= \delta_{k,j+1} \cdot A_j A_{j+1}^{-1}(1+o(1)), \qquad d_{j,k}=\delta_{k,j+1}^{\ast} \cdot B_j B_{j+1}^{-1}(1+o(1)), \quad (1\le k\le m).
	\]
 Hence, by the equivalence of norms, 
	it is clear that 
	the error vectors in \eqref{eq:errt} all have $\Vert\cdot\Vert_2$ norm of order respectively 
	\begin{equation}  \label{eq:fertig}
	\Vert \tilde{\boldsymbol{c}}_j\Vert_2= A_j A_{j+1}^{-1} \Gamma_j (1+o(1)), \qquad \Vert \tilde{\boldsymbol{d}}_j\Vert_2=B_j B_{j+1}^{-1} \tilde{\Gamma}_j (1+o(1)),\qquad j\to\infty,
	\end{equation}
 with $\Gamma_j, \tilde{\Gamma}_j$ of one of the numbers
 \[
  \Gamma_j, \tilde{\Gamma}_j \in \{ \Vert (\pm 1,\pm 1,\ldots,\pm 1)\Vert_2\}, \qquad j\ge 1,
 \]
 for any possible sign choice. 
 The values $\tilde{\Gamma}_j$ do not matter much as 
 the according evaluations 
 will be of negligible order by 
 the analogue of \eqref{eq:frosch}. 
 Now if we let 
 \[
 \Gamma:= \max\{ \Vert (\pm 1,\pm 1,\ldots,\pm 1)\Vert_2 \}
 \]
 with maximum overall $2^m$ sign choices,
 we can use the argument of Section~\ref{sec61}. Hereby we may assume that the maximizing sign choice occurs for infinitely many $(\delta_{1,j}, \ldots, \delta_{m,j})$, as otherwise we redefine $\Gamma$ via the maximum among those $m$ term sign sequences occurring infinitely often.
 Moreover, due to \eqref{eq:fertig}, the arguments of
    Section~\ref{st1} applied to each of the $m$ coordinates separately still apply very similarly
    and yield that the vectors $(A_j, 0 , -F_{j,1},\ldots,-F_{j,m})$ and
    $(0 , B_j, -G_{j,1},\ldots,-G_{j,m})$ essentially form the sequence of best approximations for the induced $V$. We leave the details to the reader. Then very similar arguments as in the classical case
    including Lemma~\ref{lemme} show that the Dirichlet constant $\Theta^{m\times n}(\Omega)$ of many such $\Omega=\Omega(\delta,\delta^{\ast})$ equals $c$, as before. 
	
	Finally we show that for many choices of $\delta_{k,j}, \delta_{k,j}^{\ast}$ all entries
	of $\Omega\in\R^{m\times n}$ are linearly independent over $\Q$ together 
	with $\{1\}$. Let us first show this for the submatrix $V\in\R^{m\times 2}$. 
	Label the entries $V_{i,j}$ of $V$ by $v_1,\ldots,v_{2m}$ with
	$v_{2h}=V_{h,2}$ and $v_{2h-1}=V_{h,1}$, $1\le h\le m$.
	Now the first line of $V$ is $\Q$-linearly independent 
	for any choice of $\delta_{1,j}, \delta_{1,j}^{\ast}$ sequences
	as otherwise the sequence
	of best approximations for $(V_{1,1}, V_{1,2})$ 
	would terminate, but the proof above shows that this is not the case. 
	Hence $\{ 1,v_1, v_2\}$ is
	$\Q$-linearly independent. Then we proceed inductively.
	Having constructed the first $\ell\ge 2$ elements $v_1, \ldots,v_{\ell}$, we choose the $\delta$ or $\delta^{\ast}$ sequence for $v_{\ell+1}$ so that we avoid
	$\Q$-linear dependence of $v_{\ell+1}$ with the already chosen 
	$1,v_1, \ldots,v_{\ell}$. This is possible
	since the $\Q$-span of $\{ 1,v_1,\ldots,v_{\ell} \}$ is countable, but we have uncountably many choices of $\delta$ or $\delta^{\ast}$ sequences. These clearly induce uncountably many pairwise distinct numbers $v_{\ell+1}$,
	the injectivity  of the corresponding maps $\varphi: \{-1,1\}^{\mathbb{N}}\to \R$ resp. $\varphi^{\ast}: \{-1,1\}^{\mathbb{N}}\to \R$
	follows from the rapid increase of the sequences $A_j$ resp. $B_j$. Repeating
	this for $\ell=2,3,\ldots,2m-1$, finally all entries of $V$ will be
	independent over $\Q$ with $\{1\}$, as desired. 
	
	Now Step II of the proof above combined with the
	following easy argument show that we can transition from $V$ to $\Omega$.
	
	\begin{proposition}  \label{p}
		Let $g, h$ be positive integers. Assume given real numbers
		$x_1, \ldots,x_g$ are $\Q$-linearly independent together with $1$. 
		Then only for a set of Hausdorff dimension $h-1$ of vectors
		$\boldsymbol{y}=(y_1,\ldots,y_h)$ in $\R^h$, the joint vector $\boldsymbol{z}=(x_1,\ldots,x_g,y_1,\ldots,y_h)$ is $\Q$-linearly dependent with $1$.
	\end{proposition}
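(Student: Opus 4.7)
The plan is to express the set of ``bad'' vectors $\boldsymbol{y}$ as a countable union of proper affine hyperplanes of $\R^h$, and then invoke countable stability of Hausdorff dimension.

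Specifically, for each nonzero integer tuple $(n_0, n_1, \ldots, n_g, m_1, \ldots, m_h) \in \Z^{g+h+1}\setminus\{\boldsymbol{0}\}$ I would define
$$H_{\boldsymbol{n}, \boldsymbol{m}} := \left\{ \boldsymbol{y}\in\R^h \,:\, n_0 + \sum_{i=1}^g n_i x_i + \sum_{j=1}^h m_j y_j = 0 \right\}.$$
Letting $\mathcal{B}$ denote the set of those $\boldsymbol{y} \in \R^h$ for which $(1, x_1, \ldots, x_g, y_1, \ldots, y_h)$ is $\Q$-linearly dependent, one has $\mathcal{B} = \bigcup H_{\boldsymbol{n}, \boldsymbol{m}}$ by definition, with the union over all such integer tuples.

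The crucial observation is that whenever $m_1 = \cdots = m_h = 0$ but $(n_0, \ldots, n_g) \ne \boldsymbol{0}$, the $\Q$-linear independence of $\{1, x_1, \ldots, x_g\}$ forces the left-hand side to be a fixed nonzero real number, so $H_{\boldsymbol{n}, \boldsymbol{m}} = \emptyset$. In every remaining contributing tuple at least one $m_j$ is nonzero, and $H_{\boldsymbol{n}, \boldsymbol{m}}$ is then a proper affine hyperplane in $\R^h$ of Hausdorff dimension exactly $h-1$. Countable stability of Hausdorff dimension, namely $\dim_H \bigcup_k E_k = \sup_k \dim_H E_k$, then yields $\dim_H \mathcal{B} \le h-1$. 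For the matching lower bound one notes that $\mathcal{B}$ contains any one such hyperplane, for instance $\{y_1 = 1\}$ (taking $n_0 = 1$, $m_1 = -1$, and all other coefficients zero), so $\dim_H \mathcal{B} \ge h-1$.

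There is essentially no obstacle here: the $\Q$-linear independence hypothesis enters in precisely one spot, to rule out the ``purely constant'' relations with $\boldsymbol{m} = \boldsymbol{0}$, and the remainder is a soft application of the countable stability of Hausdorff dimension together with the fact that proper affine hyperplanes have dimension one less than the ambient space.
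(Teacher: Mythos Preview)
Your proof is correct and follows essentially the same approach as the paper: express the bad set as a countable union of affine hyperplanes (using the $\Q$-linear independence of $\{1,x_1,\ldots,x_g\}$ to eliminate the case $\boldsymbol{m}=\boldsymbol{0}$), then apply countable stability of Hausdorff dimension. You additionally supply the easy lower bound $\dim_H \mathcal{B}\ge h-1$, which the paper's proof omits (it only states the upper bound).
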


\begin{proof}
	Since the $x_j$ are independent, if $\{ 1,\boldsymbol{z}\}$ is 
	dependent over $\Q$ then
	there are integers $M_i, N_i, N$ with
	\[
	N+ \sum_{i=1}^{g} M_i x_i +\sum_{i=1}^{h} N_i y_i = 0
	\]
	and not all $N_i$ are $0$. But this means that $\boldsymbol{y}$ lies
	in a countable union of affine hyperplanes of $\R^{h}$. Hence 
	we see that the set of $\boldsymbol{y}$ inducing dependency over $\Q$
	indeed has Hausdorff dimension at most $h-1$.
	\end{proof}
	
It suffices to take $g=2m, h=(n-2)m$ and to choose $x_i$ the entries of $V$ (first two columns of $\Omega$)
and $y_i$ the remaining entries of $\Omega$ (entries of $B$) in the proposition to conclude
that a full dimensional set of possible matrices $B\in \R^{m\times (n-2)}\cong \R^{m(n-2)}$ remains (in fact of positive $m(n-2)$ dimensional
Hausdorff measure).

\section{Proofs of Theorems~\ref{arbn} and~\ref{ganze} } \label{Sect9}

Theorems \ref{arbn} admits a rather easy proof, as a consequence of the following simple lemma. We consider maximum norms on both $\R^n$ and $\R^m$
and simply write $\psi^{m\times n}_{\Omega}(t)$
for $\psi^{m\times n}_{\Omega}(\Vert \cdot\Vert_{\infty},\Vert \cdot \Vert_{\infty},t)$.

\begin{lemma}  \label{llemma}
    Let $\Omega\in\R^{m\times n}$ be a block matrix
    consisting of rectangular blocks $\Omega_1,\ldots,\Omega_k$ 
    of sizes $(m_j,n_j), 1\le j\le k$
    along the diagonal, $m=\sum m_j$, $n=\sum n_j$.  
    Then
    \[
    \psi_{\Omega}^{m\times n}(t)= 
    \min_{1\leq j \leq k} \psi_{\Omega_j}^{m_j\times n_j}(t), \qquad t>0.
    \]
\end{lemma}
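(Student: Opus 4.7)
The plan is to exploit the fact that under maximum norms on both $\R^n$ and $\R^m$, the block diagonal structure decomposes cleanly in both the height constraint and the error term. Write any integer vector $\bb=(\hat{\bb},\tilde{\bb})\in \Z^{n+m}$ according to the block partition, so $\hat{\bb}=(\hat{\bb}_1,\ldots,\hat{\bb}_k)$ with $\hat{\bb}_j\in \Z^{n_j}$ and $\tilde{\bb}=(\tilde{\bb}_1,\ldots,\tilde{\bb}_k)$ with $\tilde{\bb}_j\in \Z^{m_j}$. Since $\Omega$ is block diagonal,
\[
\Omega\cdot \hat{\bb}+\tilde{\bb}=(\Omega_1\hat{\bb}_1+\tilde{\bb}_1,\ldots,\Omega_k\hat{\bb}_k+\tilde{\bb}_k),
\]
and the two maximum norms factor block-wise as
\[
\|\hat{\bb}\|_{\infty}=\max_{1\le j\le k}\|\hat{\bb}_j\|_{\infty}, \qquad \|\Omega\hat{\bb}+\tilde{\bb}\|_{\infty}=\max_{1\le j\le k}\|\Omega_j\hat{\bb}_j+\tilde{\bb}_j\|_{\infty}.
\]

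For the upper bound, I would fix any index $j_0$ and take a best approximation $(\hat{\bb}_{j_0},\tilde{\bb}_{j_0})\in \Z^{n_{j_0}+m_{j_0}}$ realizing $\psi^{m_{j_0}\times n_{j_0}}_{\Omega_{j_0}}(t)$; by definition it satisfies $1\le \|\hat{\bb}_{j_0}\|_{\infty}\le t$. Embedding it into $\Z^{n+m}$ by setting $\hat{\bb}_j=0$ and $\tilde{\bb}_j=0$ for $j\ne j_0$ preserves both the height (so $1\le \|\hat{\bb}\|_{\infty}\le t$) and the error norm, giving a valid competitor for $\psi^{m\times n}_{\Omega}(t)$. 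Minimizing over $j_0$ yields $\psi^{m\times n}_{\Omega}(t)\le \min_j \psi^{m_j\times n_j}_{\Omega_j}(t)$.

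For the reverse inequality, pick any admissible $\bb$ with $1\le \|\hat{\bb}\|_{\infty}\le t$. Because $\|\hat{\bb}\|_{\infty}\ge 1$, at least one sub-block $\hat{\bb}_{j_1}$ is nonzero and thus satisfies $1\le \|\hat{\bb}_{j_1}\|_{\infty}\le t$. The pair $(\hat{\bb}_{j_1},\tilde{\bb}_{j_1})$ is then an admissible competitor for $\psi^{m_{j_1}\times n_{j_1}}_{\Omega_{j_1}}(t)$, so
\[
\|\Omega\hat{\bb}+\tilde{\bb}\|_{\infty}\ge \|\Omega_{j_1}\hat{\bb}_{j_1}+\tilde{\bb}_{j_1}\|_{\infty}\ge \psi^{m_{j_1}\times n_{j_1}}_{\Omega_{j_1}}(t)\ge \min_{1\le j\le k}\psi^{m_j\times n_j}_{\Omega_j}(t),
\]
and taking the infimum over $\bb$ finishes the proof.

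There is no real obstacle here; the argument is essentially a bookkeeping exercise. The only point worth flagging is that the choice of maximum norms is crucial: with norms that do not factor across coordinate blocks, neither direction would decompose cleanly, and the clean identity would be replaced by two-sided inequalities depending on norm-equivalence constants.
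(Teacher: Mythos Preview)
Your proof is correct and follows essentially the same approach as the paper's: both directions exploit the block-wise factorization of the maximum norm, embedding a best approximation for a single block with zeros elsewhere for the upper bound, and projecting onto a nonzero sub-block for the lower bound. Your write-up is in fact slightly more explicit than the paper's, but the argument is the same.
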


\begin{proof}
    First observe that by the form of $\Omega$, the problem
    decomposes into $k$ decoupled linear form approximation problems.
    Fix $t$ and let $\bb_1,\ldots,\bb_k$ in $\Z^{m_j+n_j}$ be the best approximation integer vectors 
    inducing $\psi_{\Omega_j}^{m_{j}\times n_{j}}(t)$ for $1\le j\le k$, i.e. $\bb=\bb_j$
    minimizes $\Vert\Omega_j^{\ast} \bb\Vert_{\infty}=\Vert\Omega_j  \hat{\bb}-\tilde{\bb}\Vert_{\infty}$ among integer vectors $\bb$ with $0<\Vert \bb\Vert_{\infty}\le t$.
    The inequality
    \[
    \psi_{\Omega}^{m\times n}(t) \le \min_{1\le j\le k} \psi_{\Omega_j}^{m_j\times n_j}(t)
    \]
    follows by taking the
    integer vector $\bb\in \Z^{m+n}$ with entries of $\bb_{j_0}$ at the according places, with $j_0$
    inducing the minimum of $\Vert\Omega_j^{\ast} \bb_j\Vert_{\infty}$ over $j$, and $0$ entries at all other places. By choice of the maximum norm, this will have the same norm 
    \[
    \Vert \Omega^{\ast}\cdot \bb\Vert_{\infty} = 
    \Vert\Omega_{j_0}^{\ast}\cdot \bb_{j_{0}}\Vert_{\infty} 
    =\psi_{\Omega_{j_0}}^{m_{j_0}\times n_{j_0}}(t)
    = \min_{1\le j\le k} \psi_{\Omega_j}^{m_j\times n_j}(t).
    \]
    Conversely, since some coordinate of the best approximation vector for given $t$ is not $0$ and the system is decoupled and we use maximum norm,
    we must have 
    \[
    \psi_{\Omega}^{m\times n}(t) \ge \min_{1\le j\le k} \psi_{\Omega_j}^{m_j\times n_j}(t).
    \]
\end{proof}

The deduction of the case $m=n$ works as follows.

\begin{proof}[Proof of Theorem~\ref{arbn}]
    As in  Lemma~\ref{llemma}, take a matrix of the form $\Omega=\rm{diag}(\Omega_1, \Omega_2)$ with diagonal
    blocks $\Omega_1\in Bad_{n-1,n-1}$ and
    $\Omega_2=\{\eta\}\notin Bad_{1,1}\cup \Q$ are arbitrary. 
    Keep in mind that as $n/n=(n-1)/(n-1)=1/1=1$, the Dirichlet exponent (remark: not constant) 
    is $1$ for all $\Omega_1$, $\eta$ and $\Omega$, for all arguments below.

    By Lemma~\ref{llemma} we have
    \begin{equation} \label{eq:ABC}
    \Theta^{n\times n}(\Omega) =
    \limsup_{t\to\infty} 
    t \cdot \psi_{\Omega}^{n\times n}(t)
    = \limsup_{t\to\infty} 
    t \cdot \min \{ \psi_{\Omega_1}^{(n-1)\times (n-1)}(t), \psi_{\eta}^{1\times 1}(t)\}.
    \end{equation}
    Now since $\Omega_1\in Bad_{n-1,n-1}\subseteq Di_{n-1,n-1}$ we have
    \[
    \Theta^{(n-1)\times (n-1)}(\Omega_1)=
    \limsup_{t\to\infty} 
    t^{\frac{n-1}{n-1}} \cdot \psi_{\Omega_1}^{(n-1)\times (n-1)}(t)=\limsup_{t\to\infty} 
    t \cdot \psi_{\Omega_1}^{(n-1)\times (n-1)}(t) < 1
    \]
    so $\Omega\in Di_{n,n}$ as well by \eqref{eq:ABC}. 
    Moreover since 
    \[
    \limsup_{t\to\infty} t\psi^{1\times 1}_{\eta}(t)\ge 1/2>0
    \]
    for any irrational $\eta$ by Khintchine~\cite{Khinchin3} and
    \[
     \liminf_{t\to\infty} t\psi_{\Omega_1}^{(n-1)\times (n-1)}(t)
    >0
    \]
    since $\Omega_1\in Bad_{n-1,n-1}$, again by \eqref{eq:ABC} we have 
    $\Theta^{n\times n}(\Omega)>0$ and thus $\Omega\notin Sing_{n,n}$.
    Finally again by Lemma~\ref{llemma}
    \[
    \tilde{\Theta}^{n\times n}(\Omega)=\liminf_{t\to\infty} t^{n/n} \psi_{\Omega}(t) \le 
    \liminf_{t\to\infty} t^{1/1} \psi_{\eta}(t) = 0
    \]
    since $\eta\notin Bad_{1,1}$, so $\Omega\notin Bad_{n,n}$. Hence
    $\Omega\in FS_{n,n}$.

    It is well-known that $Bad_{n-1,n-1}$ has full Hausdorff dimension $(n-1)^2$ and
    that the complement
    of $Bad_{1,1}$ has full $1$-dimensional Lebesgue measure,
    see Section~\ref{s12}. Removing the countable set $\Q$ clearly has no effect on the Lebesgue measure.
    In particular, we can choose regular (full $\R$-rank) matrices $B$ as the complementary set
    has strictly lower Hausdorff dimension, and by the diagonal form
    and $\eta\ne 0$ the same will be true for any arising $\Omega$.
    This argument together with \eqref{eq:tric} also shows that the dimension of the set of matrices $\Omega$ constructed above is at least $(n-1)^2+1$.
\end{proof}

We want to formulate another consequence 
of Lemma \ref{llemma} for maximum norms that is used
in the deduction of some claims of Theorem \ref{neuth}.

\begin{corollary} \label{multiples}
   With $\mathcal{D}_{m,n}$ 
   as in \eqref{eq:dirspe}, for any positive integers $m,n,k$ and $c>0$ we have
   \[
   \mathcal{D}_{m,n} \subseteq \mathcal{D}_{km,kn}, 
   \]
   and
   \[
   \dim_H FS_{m,n} \le \dim_H FS_{km,kn} \;, \qquad \dim_H FS_{m,n}(c) \le \dim_H FS_{km,kn}(c).
   \]
\end{corollary}

\begin{proof}
    For any $m\times n$ matrix $\Omega$, by Lemma \ref{llemma} the $km\times kn$-matrix $\Omega^{\prime}:=\rm{diag}(\Omega,\ldots,\Omega)$
    with $k$ diagonal blocks $\Omega$ induces the same approximation function
    \[
    \psi_{\Omega}^{m\times n}(t)= \psi_{\Omega^{\prime} }^{km\times kn}(t), \qquad t>0.
    \]
    Since the Dirichlet exponents coincide via $(kn)/(km)=n/m$, the 
    claims follow directly.
\end{proof}

In fact we now know there is identity in the inclusion
thanks to \cite{AginWeiss2024}.
For the proof of Theorem~\ref{ganze}
we combine Lemmas~\ref{llemma}, \ref{lemme} with the main result from~\cite{j1} on simultaneous approximation $n=1$.

\begin{proof}[Proof of Theorem~\ref{ganze}]

Let us introduce $\ell:= \left\lceil \frac{m}{n}\right\rceil \ge 2$, so that 
\begin{equation} \label{eq:ojsi}
    (\ell-1)n< m \le \ell n.
\end{equation}
    For simplicity, first we treat the case $\ell=2$ (that suffices
    for the sake of Theorem \ref{non-empty main}). 

    {\em Case $\ell=2$:}
    Then
    \begin{equation} \label{nm bound}
    \frac{1}{2}\le \frac{n}{m} < 1.
    \end{equation}
    Moreover we can assume $m\ge 4$ and $n\ge 2$, as the otherwise 
    our exclusion of $(m,n)=(3,2)$ forces
    $n=1$, and this
    case is a trivial consequence of \cite{j1}.
    Let us first consider maximum norms
    \begin{equation} \label{eq:maxno}
\Vert\cdot\Vert_1=\Vert\cdot\Vert_2=\Vert\cdot\Vert_{\infty}.
\end{equation}
    Then~\cite[Theorem~2.2]{j1} in the case of simultaneous approximation to two numbers 
    can be applied to $\Phi(t)=ct^{-n/m}$ for
    any $c\in (0,1)$, as the conditions (d1)-(d3) of \cite{j1} are satisfied by our choice of $\Phi$ and \eqref{nm bound}. This yields $\ux\in \R^{2\times 1}$ with
    \[
    \limsup_{t\to\infty} t^{n/m} \psi^{2\times 1}_{\ux}(t) = c, \quad \text{and} \quad \liminf_{t\to\infty} t^{n/m} \psi^{2\times 1}_{\ux}(t) = 0.
    \]
    Now let $\uz\in Bad_{m-2,1}$ arbitrary 
    unless if $m=4$ then $\uz=\ux$. 
    Since $m-2\ge 2$ by assumption, it is easily seen that in either case
     \[
    \psi_{\ux}^{2\times 1}(t) \le \psi_{\uz}^{(m-2)\times 1}(t), \qquad t\ge t_0.
    \]
    Hence by Lemma~\ref{llemma} the matrix
    $U=\rm{diag}(\ux,\uz)\in \R^{ m \times 2}$ 
    satisfies
    \begin{equation} \label{Vt property}
    \limsup_{t\to\infty} t^{n/m} \psi^{m\times 2}_{U}(t) = c, \qquad \liminf_{t\to\infty} t^{n/m} \psi^{m\times 2}_{U}(t) = 0,
    \end{equation}
    as well. \par 
    If $n=2$ then we must have $m=4$ due to our assumptions $\tfrac{n}{m}\geq \frac{1}{2}$ and $m\ge 4$, and 
    then we can just take $\Omega=U$.
    Note also that in this case
    our claimed bound for Hausdorff dimension becomes zero, so there is nothing more to prove.\par 
    If $n\geq 3$, then in Lemma~\ref{lemme} take $V=U\in \R^{m\times2}$. 
    Note that the problematic
    case $n=m+2$ of the lemma is excluded
    here via \eqref{nm bound}.
    Notice further $V$ satisfies \eqref{V property} by \eqref{Vt property}. Hence, assuming $c>0$ is small enough, there exists a set of positive $m(n-2)$-dimensional Lebesgue measure $\mathcal{Y}\subseteq \R^{m\times(n-2)}$ of matrices so that for any $B\in\mathcal{Y}$, the $\R^{m\times n}$ matrix $\Omega=(B,U)$ satisfies
    \begin{equation*}
        \limsup_{t\to\infty}t^{n/m}\psi_{\Omega}^{m\times n}(t)=c\, .
    \end{equation*}
    Finally, we show $\Omega$ is not badly approximable.
    By \eqref{Vt property} there exists a sequence of best approximations such that on the sequence $(t_{j})_{j\geq 1}$ we have
    \begin{equation*}
        \lim_{j\to \infty} t_{j}^{n/m}\psi_{U}^{m\times 2}(t_{j})=0.
    \end{equation*}
    Considering $\Omega$, take the same sequence of the best approximations and place a zero in all entries not related to $U$ (that is starting from the third) to obtain the same statement, i.e.
    \begin{equation*}
        \lim_{j\to \infty} t_{j}^{n/m}\psi_{\Omega}^{m\times n}(t_{j})=0.
    \end{equation*}
    Combining these two claims we infer $\Omega \in FS_{m,n}(c)$. Using \eqref{eq:tric} the dimension statement follows since we have shown that (when $m\geq 5$)
    \begin{equation*}
        \dim_H FS_{m,n}(c) \geq \dim_H \mathcal{Y} + \dim_H Bad_{m-2,1} = m(n-2) + m-2, 
    \end{equation*}
   where we used that $Bad_{m-2,1}$ has full Hausdorff dimension and $\mathcal{Y}$ has positive Lebesgue measure. In the case $(m,n)=(4,3)$ we obtain
    \begin{equation*}
        \dim_H FS_{4,3}(c) \geq \dim_H \mathcal{Y} = m(n-2)=4. 
    \end{equation*}
    We remark that for $(m,n)=(4,2)$ the according estimate holds as well, but the right hand side vanishes. This completes the proofs for the maximum norm, i.e. \eqref{eq:maxno}.
    

    Now let $\Vert \cdot\Vert_1$ be arbitrary and still $\Vert\cdot\Vert_2=\Vert \cdot\Vert_{\infty}$. Let $\Vert\cdot \Vert_1^{\prime}$
    be the projected norm
    \[
    \Vert (x_1,x_2)\Vert_1^{\prime}
    := \Vert (x_1,x_2,0,\ldots,0)\Vert_1.
    \]
    Let $\gamma= \Vert\ee_1\Vert_1^{\prime}$ be the norm of the first base vector of $\R^2$. By \cite[Theorem 2.2]{j1} and again \eqref{nm bound}, for small enough $c>0$ we may pick $\ux\in\R^{2\times 1}$ such that 
      \begin{equation} \label{eq:invo}
    \limsup_{t\to\infty} t^{n/m} \psi^{ 2\times1}_{\ux}(t) = c_1:=c \gamma^{n/m}, \qquad \liminf_{t\to\infty} t^{n/m} \psi^{2\times1}_{\ux}(t) = 0.
    \end{equation}
    Suppose we can show that, for any $U= \rm{diag}(\ux,\uz)\in \R^{m\times 2}$ as above, which we can identify with elements in $\{ \ux\}\times Bad_{m-2,1}\subseteq \R^{m}$, we have that
     \begin{equation} \label{claim}
    \psi_U^{m\times 2}(\Vert\cdot\Vert_1^{\prime}, \Vert\cdot\Vert_2,t) = \psi_{\ux}^{2\times 1}(|\cdot |,\Vert \cdot\Vert_{\infty},\gamma^{-1} t),\qquad t\ge t_0 \, .
    \end{equation}
    If \eqref{claim} holds, then indeed
    \[
    \limsup_{t\to\infty} t^{n/m} \psi_U^{m\times 2}(\Vert\cdot\Vert_{1}',\Vert\cdot\Vert_{2},t) =\gamma^{-n/m}  \limsup_{t\to\infty} t^{n/m} \psi_{\ux}^{2\times 1}(|\cdot |,\Vert \cdot\Vert_{\infty},\gamma^{-1} t)= \gamma^{-n/m} c_1=c,
    \]
    and
    \[
    \liminf_{t\to\infty} t^{n/m}\psi_U^{m\times 2}(\Vert\cdot\Vert_{1}',\Vert\cdot\Vert_{2},t) = \gamma^{-n/m} \liminf_{t\to\infty}t^{n/m}\psi_{\ux}^{2\times 1}(|\cdot |,\Vert \cdot\Vert_{\infty},\gamma^{-1} t)= 0.
    \]
    Hence for $c$ small enough we again we may again apply 
    Lemma~\ref{lemme}, to find some set $\mathcal{Y}\subseteq \R^{m\times (n-2)}$ of positive Lebesgue measure
    such that for any $B\in \mathcal{Y}$ we have \begin{equation*}
        \limsup_{t\to\infty} t^{n/m}\psi^{m\times n}_{(U,B)}(\Vert\cdot\Vert_{1},\Vert\cdot\Vert_{2},t)=c \, ,\qquad \liminf_{t\to\infty} t^{n/m}\psi^{m\times n}_{(U,B)}(\Vert\cdot\Vert_{1},\Vert\cdot\Vert_{2},t)=0\, .
    \end{equation*}
    So $(U,B)\in FS_{m,n}(\Vert\cdot\Vert_{1},\Vert\cdot\Vert_{2},c)$, and analogously to the maximum norm by \eqref{eq:tric} we conclude
    \begin{equation*}
        \dim_H FS_{m,n}(\Vert\cdot\Vert_{1},\Vert\cdot\Vert_{2},c) \geq \dim_H \mathcal{Y} + \dim_H Bad_{m-2,1} = m(n-2) + m-2, 
    \end{equation*}
    for $m\ge 5$, and accordingly for $(m,n)=(4,3)$ we get the bound 
    \[
    \dim_H FS_{4,3}(\Vert\cdot\Vert_{1},\Vert\cdot\Vert_{2},c) \geq
    m(n-2)=4.
    \]
    Since $\lceil m/n\rceil=2$ by \eqref{nm bound}, we indeed recognize these as special cases
    of our claim.
    Thus to complete the proof for $\ell=2$ we need to show \eqref{claim} holds.
    
    For the inequality
    \begin{equation} \label{eq:unn}
    \psi_U^{m\times 2}(\Vert\cdot\Vert_1^{\prime}, \Vert\cdot\Vert_2,t) \le \psi_{\ux}^{2\times 1}(|\cdot |,\Vert \cdot\Vert_{\infty},\gamma^{-1} t), \qquad t\geq t_{0}
    \end{equation}
    observe that any $(x,\boldsymbol{w})\in \Z\times \Z^2$ with $| x\vert\le \gamma^{-1} t$ gives rise to
    $\boldsymbol{z}=(x,0)=x \ee_1\in \Z^2$ of norm $\Vert \boldsymbol{z} \Vert_1^{\prime}= |x|\gamma\le t$. Moreover since
      \begin{equation} \label{eq:noon}
    U\cdot \boldsymbol{z}-(\boldsymbol{w},0,\ldots,0)= (\ux\cdot x - \boldsymbol{w} , 0, \ldots,0)\in\R^m,
    \end{equation}
    with the same first two coordinates
    followed by $m-2$ zeros, we have the identity
    \begin{equation}  \label{eq:spet}
        \Vert U\cdot \boldsymbol{z}-(\boldsymbol{w},0,\ldots,0)\Vert_{\infty}=  \Vert\ux\cdot x-\boldsymbol{w}\Vert_{\infty}\, ,
    \end{equation}
   where we used $\Vert\cdot\Vert_2$ being the maximum norm. As $t$ is arbitrary \eqref{eq:unn} is implied.

    Now consider the reverse inequality 
    \begin{equation} \label{eq:rever}
    \psi_U^{m\times 2}(\Vert\cdot\Vert_1^{\prime}, \Vert\cdot\Vert_2,t) \ge \psi_{\ux}^{2\times 1}(|\cdot |,\Vert \cdot\Vert_{\infty},\gamma^{-1} t), \qquad t\geq t_{0}.
    \end{equation}
    Begin with $(m,n)\ne (4,2)$.
    Note that if $\boldsymbol{z}=(z_1,z_2)\in \mathbb{Z}^2$ with $\Vert \boldsymbol{z}\Vert_{1}^{\prime} \le t$ does
    not lie in the space $z_2=0$,
    then by equivalence of norms
    for any $\textbf{y}\in\mathbb{Z}^m$
    \begin{equation} \label{eq:impco}
    \Vert U\cdot \boldsymbol{z}- \boldsymbol{y}
    \Vert_{\infty}
    \gg \Vert\boldsymbol{z}\Vert_{\infty}^{-1/(m-2)}\gg \Vert\boldsymbol{z}\Vert_{1}^{\prime -1/(m-2)}\ge t^{-1/(m-2)}
    \end{equation}
    where the most left estimate holds
    since
    the system is decoupled and 
    $\uz\in Bad_{m-2,1}$. As we noticed above that $m\ge 4$ and $1/2\le n/m$, in our case $(m,n)\ne (4,2)$ we have
    the strict inequality $1/(m-2)< n/m$. Then, in view of \eqref{eq:invo}, 
    approximation quality \eqref{eq:impco}
    is beaten by $\psi_{\ux}^{2\times 1}(|\cdot |,\Vert \cdot\Vert_{\infty},\gamma^{-1} t)$, so extending accordingly
    the involved integer best approximation approximations
    to $\Z^n$ by adding $0$ entries 
    leads to the same smaller value. So we can exclude
    such vectors. But for vectors $\boldsymbol{z}$ in the linear space $z_2=0$,
    the integer
    vector $\textbf{y}$ minimizing 
    $\Vert U\cdot \textbf{z}-\textbf{y}\Vert_{\infty}$
    for given $\textbf{z}$ is easily seen to be of the form
    $\textbf{y}=(\textbf{w},0,\ldots,0)$ with $\textbf{w}\in\mathbb{Z}^2$, as otherwise some entry of $U\cdot \textbf{z}-\textbf{y}$ is of modulus at least $1$. Then the
    output $U\cdot \textbf{z}-\textbf{y}\in \R^m$ again satisfies \eqref{eq:noon} and hence \eqref{eq:spet}, settling \eqref{eq:rever} via a short calculation. For $m=4, n=2$ a very similar argument applies
    using $\uz=\ux$, however we need
    $\Vert\cdot\Vert_1$ to be expanding to exclude
    better approximations outside
    the space $z_2=0$. If
    the expanding property is assumed, then it is again easy to see that we may restrict $\textbf{z}$ to
    this space and the claims follows easily. We leave the details to the reader.

    Finally, let $\Vert\cdot\Vert_2$ be any expanding norm on 
    $\mathbb{R}^m$
    and consider again the induced 
    projected norm $\Vert\cdot\Vert_2^{\prime}$ 
    on $\mathbb{R}^2$ by
    \[
    \Vert (x_1,x_2)\Vert_2^{\prime}:= \Vert (x_1,x_2,0,\ldots,0)\Vert_2.
    \]
    It is easy to see that $\Vert\cdot\Vert_2^{\prime}$ is an expanding norm
    on $\mathbb{R}^2$.
    A minor adaption of~\cite[Corollary~2]{j1} to more general $\Phi$ satisfying (d1)-(d3) of \cite{j1} can be proved analogously. When applied with respect to simultaneous approximation of two reals, and
    as (d1)-(d3) hold for our $\Phi(t)=ct^{-n/m}$
    in view of \eqref{nm bound} and $c\le 1$, 
    it yields the following:
    For any small enough $c>0$, we can similarly arrange some $\ux\in\mathbb{R}^{2\times 1}$ to satisfy
      \[
    \limsup_{t\to\infty} t^{n/m} \psi^{ 2\times1}_{\ux}(|\cdot|,\Vert\cdot\Vert_2^{\prime},t) = c_1:=c \gamma^{n/m},\quad \text{ and } \quad \liminf_{t\to\infty} t^{n/m} \psi^{2\times1}_{\ux}(|\cdot|,\Vert\cdot\Vert_2^{\prime},t) = 0.\]
    We can then copy the proof above
    replacing $\Vert\cdot \Vert_{\infty}$ 
    by $\Vert\cdot\Vert_2^{\prime}$ whenever considering $\psi^{2\times 1}_{\ux}$.
    Again by the same argument, it suffices to show the accordingly altered variant of \eqref{claim}. For the estimate \eqref{eq:unn},
    the only place where we needed
    $\Vert\cdot\Vert_2$ to be
    the maximum norm is to deduce
    that the projected vectors
    satisfy \eqref{eq:spet}. However, 
    in view of \eqref{eq:noon},
    the defining property of 
    $\Vert\cdot\Vert_2^{\prime}$ means that the according equality 
    still holds. The proof of the reverse inequality 
     \eqref{eq:rever}
    is analogous to the above, only
    with different implied constants
    in \eqref{eq:impco}. The case $\ell=2$ is done up to the full rank claim. 
    
    For the rank claim, consider first
    the $n\times 2$ matrix obtained from
    any $n$ lines of $V$ containing its first and last line, and complete it to an $n\times n$ matrix $H$ by 
    adding to the right an $n\times (n-2)$ matrix of free variable entries $x_{i,j}$, $1\le i\le n$, $3\le j\le n$. Then writing
    $\ux=(\xi_1,\ldots,\xi_{\ell})$, $\uz=(\zeta_1,\ldots,\zeta_{m-\ell})$,
    the first and last line of $H$ are of the form
    $(\xi_1, 0, x_{1,3}, \ldots, x_{1,n})$ resp.
    $(0, \zeta_{m-\ell}, x_{n,3}, \ldots, x_{n,n})$. Since $\xi_i\ne 0$ and $\zeta_j\ne 0$ for all $1\le i\le \ell, 1\le j\le m-\ell$
    is easy to see (otherwise $\ux$ or $\uz$ would be singular, contradicting 
    $\ux\in DI_{\ell}(c)$ resp. 
    $\uz\in Bad_{m-\ell,1}$), it is further readily checked
    that the determinant of $H$ is not the constant $0$ polynomial in the
    $x_{i,j}$. Hence only a Lebesgue nullset of $G:=(x_{i,j})\in \R^{n\times (n-2)}$ will induce non invertible $H$. Now as $G\in \R^{n\times (n-2)}$ can be considered a submatrix of $B\in \R^{m\times (n-2)}$ above, by considering the Cartesian product
    $G\times \R^{(m-n)\times (n-2)}$
    only a nullset of $B$ will lead to $\Omega$ of deficient rank. On the other hand, we are equipped with a positive measure set $B$ by Lemma~\ref{lemme}. Hence
    indeed we can restrict to full rank  
    matrices without affecting the metrical claim.

    {\em Case $\ell\ge 3$:}
   Then by \eqref{eq:ojsi} similarly to \eqref{nm bound} we have
    \begin{equation} \label{eq:Dd}
    \frac{1}{\ell}\le \frac{n}{m} < \frac{1}{\ell-1}.
    \end{equation}
    Now again~\cite[Theorem~2.2]{j1} can be applied, with respect to simultaneous approximation to $\ell$ numbers, to $\Phi(t)=ct^{-n/m}$ for
    any $c\in (0,1)$, as its conditions (d1)-(d3) are satisfied by \eqref{eq:Dd}. This yields $\ux\in \R^{\ell \times 1}$ so that
    \[
    \limsup_{t\to\infty} t^{n/m} \psi^{\ell \times 1}_{\ux}(t) = c, \quad \text{and} \quad  \liminf_{t\to\infty} t^{n/m} \psi^{\ell \times 1}_{\ux}(t) = 0.
    \]
    Now let $\uz\in Bad_{m-\ell ,1}$ arbitrary 
    unless if $m=2\ell$ then $\uz=\ux$.
    We again consider $U=\rm{diag}(\ux, \uz)\in \mathbb{R}^{m\times 2}$ as above.
    As soon as strict inequality
    \begin{equation} \label{eq:mpso}
         m> 2\ell
    \end{equation}
    holds, we have $1/(m-\ell)< n/m$ so that
    the analogue of \eqref{eq:impco} similarly implies that we can restrict
    to the space $z_2=0$.
    It can be checked that
    the analogous arguments of the case $\ell=2$ above apply. Notably when 
    transitioning to any expanding norm $\Vert\cdot\Vert_2$, here we consider
    the restriction
    \[
     \Vert (x_1,\ldots,x_{\ell})\Vert_2^{\prime}:= \Vert (x_1,\ldots,x_{\ell},0,\ldots,0)\Vert_2,
    \]
    and apply an analogous variant of~\cite[Corollary~2]{j1} with respect to simultaneous approximation to $\ell$ numbers.
    
    Finally we need to discuss the separate cases when \eqref{eq:mpso} is satisfied, and evaluate the Hausdorff dimensions.
    Recall we have assumed $\ell\ge 3$, moreover
    we can assume $n\ge 2$ as $n=1$ was done in \cite{j2}. Via \eqref{eq:ojsi}, condition \eqref{eq:mpso} is readily checked to be satisfied if $n\ge 3$. Thus we are left with the subset of cases $n=2$. The cases
    $n=2, m\ge 4$ even, leading to $m=2\ell$, can be dealt with similarly to the case $(m,n)=(4,2)$ for $\ell=2$ above. This indeed leaves open only the cases $n=2$, and $m$ odd, as excluded in the theorem. 
    
    By construction, and the repeatedly used observation that
    $Bad_{m-\ell ,1}$ has full Hausdorff dimension $m-\ell$, we obtain the Hausdorff dimension bound
     \[
    \dim_H FS_{m,n}(c) \ge
    m\cdot (n-2)+(m-\ell)=m(n-1)-\left\lceil\frac{m}{n}\right\rceil, \qquad m\ge 2\ell+1,
    \]
    of the theorem. The remaining case $m=2\ell=2\lceil m/n\rceil$ 
    is readily checked to lead precisely to the cases $n=2$ and $m$ even 
    or $(m,n)=(4,3)$. When $(m,n)=(4,3)$ we have $\ell=2$, which is already treated above. It should be remarked that for $n=2$ our Hausdorff dimension bound is just $m(n-2)=0$, thus of no interest. The full rank claim is also proved analogously to the case $\ell=2$.
\end{proof}  



\section{Proof of Lemma~\ref{Folklore transpose}} \label{Sect10}

For matrix $\Omega \in \R^{m\times n}$ and real numbers $A,B\in\R_{+}$ define the convex bodies
\begin{align*}
    M_{A,B}(\Omega)&:=\left\{ \boldsymbol{z}\in \R^{n+m}: \max_{1\leq j\leq m} \left| \sum_{i=1}^{n}\Omega_{j,i}z_{i}+z_{n+j}\right|\leq A, \quad \max_{1\leq i \leq n }|z_{i}|\leq B \right\}, \\
    \widehat{M}_{A,B}(\Omega)&:=\left\{ \boldsymbol{z}\in \R^{n+m}: \max_{1\leq i \leq n} \left|z_{i}-\sum_{j=1}^{m}\Omega_{i,j}z_{n+j}\right|\leq B, \quad \max_{1\leq j \leq m} |z_{n+j}|\leq A \right\}.
\end{align*}
The following theorem proven by German is crucial.
\begin{theorem}{\cite[Theorem 7]{german}} \label{transference}
Let $\Omega \in \R^{m\times n}$ and $A,B\in\R_{+}$. If
\begin{equation*}
    M_{A,B}(\Omega)\cap \Z^{n+m}\backslash\{ \boldsymbol{0} \} \neq \emptyset \, ,
\end{equation*}
then
\begin{equation*}
    \widehat{M}_{A^{*},B^{*}}(\Omega)\cap \Z^{n+m}\backslash \{ \boldsymbol{0} \} \neq \emptyset 
\end{equation*}
for 
\begin{equation*}
    A^{*}=C(A^{n}B^{1-n})^{\frac{1}{n+m-1}} \quad \text{ and } \quad B^{*}=C(A^{1-m}B^{m})^{\frac{1}{n+m-1}},
\end{equation*}
with $C>0$ constant dependent only on $n$ and $m$.
\end{theorem}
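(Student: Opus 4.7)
The theorem is a transference-type result of German in the geometry of numbers, and my plan is to prove it using the duality between a symmetric convex body and its polar combined with Mahler's transference inequalities for successive minima. The key observation is that the polar body of $M_{A,B}(\Omega)$, after a coordinate swap sending $(\hat z,\tilde z)$ to $(\tilde z,-\hat z)$, has the form of a $\widehat M$-body for the same $\Omega$. Thus a short nonzero integer vector in the primal body transfers via duality to a short nonzero integer vector in the dual body, and the remaining task is to track how the parameters $A,B$ transform into $A^*,B^*$.

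First I would identify $M_{A,B}(\Omega)\subset\R^{n+m}$ as the symmetric convex parallelepiped cut out by $n+m$ linearly independent linear forms: the $m$ ``small linear form'' conditions with bound $A$ and the $n$ ``height'' conditions with bound $B$. A triangular change of variables shows $\mathrm{vol}(M_{A,B}(\Omega))=2^{n+m}A^mB^n$. Next I would compute the polar body $M_{A,B}(\Omega)^{\circ}$ by inverting the block matrix $\left(\begin{smallmatrix}I_n & 0\\ \Omega & I_m\end{smallmatrix}\right)$ whose rows define the forms cutting out $M$. A direct calculation then shows that $M_{A,B}(\Omega)^{\circ}$ is, up to a coordinate permutation swapping the first $n$ with the last $m$ coordinates (and a sign flip on the off-diagonal block), a body of the same shape as $\widehat M_{1/A,\,1/B}(\Omega)$.

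Having identified the dual body, I would invoke Mahler's transference inequalities $1\le \lambda_j(K)\lambda_{d+1-j}(K^{\circ})\le d!$ for $d=n+m$, together with the Mahler--Bourgain--Milman volume estimate $\mathrm{vol}(M)\,\mathrm{vol}(M^{\circ})\asymp_d 1$, so $\mathrm{vol}(M^{\circ})\asymp_d (A^mB^n)^{-1}$. The hypothesis $\lambda_1(M)\le 1$ combined with the sharper version of Mahler's inequalities allows us to force $n+m-1$ of the successive minima of $M^{\circ}$ to be of size $O_d(1)$; by Minkowski's second theorem, $\prod_j\lambda_j(M^{\circ})\le 2^d d!/\mathrm{vol}(M^{\circ})\asymp_d A^mB^n$, the remaining one must absorb the entire volume deficit, giving $\lambda_1(M^{\circ})\lesssim_d(A^mB^n)^{1/(n+m-1)}$. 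Applying an asymmetric dilation $\mathrm{diag}(B^*,\ldots,B^*,A^*,\ldots,A^*)$ to $M^{\circ}$ (equivalently, to $\widehat M_{1/A,1/B}(\Omega)$ via the polar identification) and invoking Minkowski's first theorem on the rescaled body then produces the claimed nonzero integer point in $\widehat M_{A^*,B^*}(\Omega)$.

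The main obstacle is the delicate exponent accounting that produces $1/(n+m-1)$ rather than the naive $1/(n+m)$ in the expressions for $A^*$ and $B^*$. This ``loss of one dimension'' is the signature feature of Mahler-type transference: one successive minimum of the polar can be pinned down via the transference inequality independently of the volume budget, leaving the remaining $n+m-1$ minima to absorb the deficit $\asymp A^mB^n$. The asymmetric split across the two coordinate blocks producing the precise expressions $A^*=C(A^nB^{1-n})^{1/(n+m-1)}$ and $B^*=C(A^{1-m}B^m)^{1/(n+m-1)}$ then comes from the unique choice of dilation factors that makes $\widehat M_{A^*,B^*}(\Omega)$ contain a scaled copy of $M^{\circ}$ of unit first minimum; verifying that this particular split is forced (rather than just one of many valid choices) and carrying the constants through cleanly is the technical heart of the argument.
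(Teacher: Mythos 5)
This theorem is imported verbatim from German~\cite[Theorem~7]{german}; the paper itself contains no proof of it, so I assess your sketch on its own merits. Your overall strategy — identify the polar body and run Mahler transference — is the right family of ideas, since Khintchine--German transference is genuinely a Mahler-type duality, but two of the concrete steps do not go through as written.

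The main gap is the bound on $\lambda_1(M^{\circ})$. From $\lambda_1(M)\le 1$ and $\lambda_j(M)\lambda_{d+1-j}(M^{\circ})\ge 1$ (with $d=n+m$) you can only conclude $\lambda_d(M^{\circ})\ge 1$; this gives no upper control on any of $\lambda_1(M^{\circ}),\ldots,\lambda_{d-1}(M^{\circ})$, so the claim that ``$n+m-1$ of the successive minima of $M^{\circ}$ are $O_d(1)$'' does not follow. And even granting it, Minkowski's second theorem would then yield $\lambda_1(M^{\circ})\lesssim_d A^mB^n$, not $(A^mB^n)^{1/(n+m-1)}$, so the exponent would still come out wrong. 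The correct route bounds the last minimum of $M$ itself: $\lambda_1(M)\le 1$ and $\prod_i\lambda_i(M)\gtrsim_d \mathrm{vol}(M)^{-1}\asymp_d(A^mB^n)^{-1}$ give $\lambda_d(M)^{d-1}\ge\prod_{i\ge 2}\lambda_i(M)\gtrsim_d(A^mB^n)^{-1}$, hence $\lambda_d(M)\gtrsim_d(A^mB^n)^{-1/(d-1)}$, and then the upper Mahler bound $\lambda_1(M^{\circ})\lambda_d(M)\lesssim_d 1$ gives $\lambda_1(M^{\circ})\lesssim_d(A^mB^n)^{1/(d-1)}$ — that is where the $1/(n+m-1)$ really comes from. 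The second flaw is the asymmetric dilation: applying $\mathrm{diag}(B^*,\ldots,B^*,A^*,\ldots,A^*)$ to $\widehat M_{1/A,1/B}(\Omega)$ does \emph{not} give $\widehat M_{A^*,B^*}(\Omega)$; it produces a $\widehat M$-body for the rescaled matrix $(B^*/A^*)\Omega$, a different matrix unless $A^*=B^*$. The dilation must be uniform by $\mu=C(A^mB^n)^{1/(n+m-1)}$, after which $M^{\circ}\subseteq C'\widehat M_{1/A,1/B}(\Omega)$ transfers the lattice point into $\widehat M_{C'\mu/A,\,C'\mu/B}(\Omega)$. Carried out, this yields $A^*=C(A^{1-n}B^n)^{1/(n+m-1)}$ and $B^*=C(A^mB^{1-m})^{1/(n+m-1)}$; a sanity check with the Dirichlet relation $A=B^{-n/m}\Rightarrow B^*\asymp(A^*)^{-m/n}$, and with the constant $\delta$ the paper actually uses in the proof of Lemma~\ref{Folklore transpose}, both confirm these exponents rather than the ones printed in the theorem statement — the latter appear to carry a transcription typo, and your argument should not be trying to match them.
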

 \begin{proof}[Proof of Lemma~\ref{Folklore transpose}]
     Since $\Omega \in FS_{m,n}(\|\cdot\|_{1},\|\cdot\|_{2},c)\subseteq \R^{m\times n}$ we have
     $\Theta^{m\times n}(\|\cdot\|_{1},\|\cdot\|_{2},\Omega)=c$. By the equivalence of norms we have that the same is true up to some constant, and so
      \begin{equation*}
         \Theta^{m\times n}(\|\cdot\|_{\infty},\|\cdot\|_{\infty},\Omega)\asymp c\, .
     \end{equation*}
     Hence, for all sufficiently large $t\in\R$ and some $\rho>0$ there is an integer
     \begin{equation*}
         \boldsymbol{b} \in M_{\rho t^{-n/m},t}(\Omega)\cap\Z^{n+m}\backslash \{\textbf{0}\}\, .
     \end{equation*}
     By Theorem~\ref{transference}, for all sufficiently large $u\in\R$ there exists integer
     \begin{equation*}
         \boldsymbol{b} \in \widehat{M}_{u,\delta u^{-m/n}}(\Omega)\cap\Z^{n+m}\backslash \{\textbf{0}\}\, , \quad \text{ for } \quad \delta=C^{\tfrac{m+n}{n}}\rho^{\tfrac{m}{n(n+m-1)}}.
     \end{equation*}
     Hence $\Omega^{T} \in Di_{n,m}(\delta)$, and by equivalence of norms there exists some $\delta^{*}>0$ such that $\Omega^{T}\in Di_{n,m}(\Vert\cdot\Vert_{2},\Vert\cdot\Vert_{1},\delta^{*})$. 
     If $D^{*}=D^{n\times m}(\Vert\cdot\Vert_1,\Vert\cdot\Vert_2)$ is the Dirichlet constant from \eqref{eq:obben} for the transpose problem,
     for small enough $\delta$ and thus $\delta^{*}<D^{*}$ this will induce a Dirichlet
     improvable matrix $\Omega^{T}$.
     Conversely, for any $0<\gamma<c$ there exists an unbounded sequence of $t\in\R$ for which 
     \begin{equation*}
         \widehat{M}_{t,\gamma t^{-n/m}}(\Omega^{T})\cap \Z^{n+m}\backslash\{\textbf{0}\} = \emptyset\, ,
     \end{equation*}
     otherwise $\Omega \in Di_{m,n}(\gamma)$, which is clearly false since $\Omega \in DI_{m,n}(c)$. Applying the contrapositive of Theorem~\ref{transference} we obtain that there exist certain arbitrarily large $u\in\R$ and a constant $C^{*}>0$ for which
     \begin{equation*}
         M_{C^{*}u^{-n/m},u}(\Omega^{T})\cap\Z^{n+m}\backslash\{\textbf{0}\} = \emptyset\, .
     \end{equation*}
     Hence $\Omega^{T} \not\in Sing_{n,m}$. Furthermore, one can similarly show that $\Omega^{T} \not \in Bad_{n,m}$, and so $$\Omega^{T} \in Di_{n,m}(\Vert\cdot\Vert_{2},\Vert\cdot\Vert_{1},\delta^{*})\backslash (Sing_{n,m}\cup Bad_{n,m})\subseteq FS_{n,m}(\Vert\cdot\Vert_{2},\Vert\cdot\Vert_{1}).$$ 
 \end{proof}

 \section{Appendix: Improvements using subsequent work by Agin and Weiss }

 A previously announced, we can show some more advanced results using Agin and Weiss~\cite{AginWeiss2024}.

 \begin{theorem} \label{AWe}
     Let $m,n$ be positive integers and $\Vert.\Vert_1$ and $\Vert.\Vert_2$ be arbitrary norms on $\mathbb{R}^n$ and $\mathbb{R}^m$ respectively. Then there 
     exists $c_0=c_0(m,n,\Vert.\Vert_1,\Vert.\Vert_2)>0$ so that
     for any $c\in [0,c_0]$ we have the following:
     \begin{itemize}
         \item[(a)] 
          As soon as $m\ne n$ and $(m,n)\ne (1,3)$  
         we
         have
         \[
     \dim_H FS_{m,n}(\Vert \cdot \Vert_{1},\Vert \cdot \Vert_{2},c)\ge m\cdot (n-2).
     \]
     For $m=n$ we still have
     \[
     \dim_H DI_{n,n}(\Vert \cdot \Vert_{1},\Vert \cdot \Vert_{2},c)\ge n\cdot (n-2).
     \]
     \item[(b)] If $m>n$ we have the stronger bound 
     \[
     \dim_H FS_{m,n}(\Vert \cdot \Vert_{1},\Vert \cdot \Vert_{2},c)\ge m\cdot (n-1).
     \]
     \end{itemize}
 \end{theorem}

 \begin{remark}
     In part (a), 
     for $m=n$ we cannot exclude that the matrices in $DI_{n,n}(\Vert \cdot \Vert_{1},\Vert \cdot \Vert_{2},c)$ we obtain are badly approximable and thus outside our sets $FS_{n,n}(\Vert \cdot \Vert_{1},\Vert \cdot \Vert_{2},c)$.
     In private communication with the authors of \cite{AginWeiss2024}, 
     they were optimistic that some refinement of their method may close the gap, however it is by no means immediate from their work.
     We can prove unconditionally a weaker positive lower bound for $\dim_H FS_{n,n}(\Vert \cdot \Vert_{1},\Vert \cdot \Vert_{2},c)$ when $n\ge 3$, by combining the diagonal block method from Theorem \ref{arbn} with the case $m=n=2$ of \cite{AginWeiss2024}, however we prefer to omit it.
 \end{remark}

 Part (a) improves on 
 Theorem \ref{thm5} by avoiding the condition $n\ge 2m$ mainly.
 Part (b) improves on Theorem~\ref{ganze} in three aspects: Firstly, the restriction on the norms is removed. Secondly the Hausdorff dimension estimate is improved. 
 Finally we do not need to exclude $(m,n)\in \{(3,2), (5,2), (7,2), \ldots \}$.
 Theorem \ref{AWe} can also be viewed as an improvement of~\cite{AginWeiss2024} via providing Hausdorff dimension estimates, for the cost of restricting $c$ to small enough values and some conditions on $m,n$.

 Moreover the proof is significantly shorter as for both aforementioned theorems of ours. 
 For Theorem \ref{ganze}
 the reason is that using \cite{AginWeiss2024} we are able to bypass an obstacle from~\cite{j1} which is present in our above proof. This stems mainly from the fact that in~\cite{j1} the uniform exponent of the involved real column vector needs to be in the range $[1/m,1/(m-1))$ for the result to be valid. This condition disappears in~\cite{AginWeiss2024}. In particular we do not need to introduce $\ell$ from the proof of Theorem~\ref{ganze}, moreover we do not need the intermediate step of blowing up a column vector $\ux$
 first to $m\times 2$ matrix, but can directly go to $m\times n$ matrices via Lemma~\ref{lemme} or a variant of it. The details are as follows.

 \begin{proof}[Proof of Theorem~\ref{AWe}]

 Proof of part (a): As in \eqref{eq:pno},
 let $\Vert.\Vert_1^{\prime}$ be the restriction of $\Vert.\Vert_1$ to the first two coordinates, that is
 \[
 \Vert (x_1,x_2) \Vert_1^{\prime}:= \Vert (x_1,x_2,0,\ldots,0)\Vert_1.
 \]
 By \cite[Corollary~1]{AginWeiss2024}, we have that for any given $c\ge 0$, there are uncountably many $V\in \mathbb{R}^{m\times 2}$ such that
 \[
   \limsup_{t\to\infty} t^{n/m}\psi_{V}^{m\times 2}(\Vert \cdot \Vert_{1}^{\prime},\Vert \cdot \Vert_{2},t)=c\, .
     \]
     Now Lemma~\ref{lemme}, combined with the argument in Section~\ref{st2}, readily implies the following: If 
     $(m,n)\ne (1,3)$ and
     $c$ is small enough, then 
     for a positive $m(n-2)$-dimensional Lebesgue measure set of matrices $B\in \mathbb{R}^{m\times (n-2)}$,
     the extension of $V$ to a matrix $\Omega=(V,B)\in \mathbb{R}^{m\times n}$
     via adding $B$ to the right of $V$ has Dirichlet constant $c$, that is
        \[
        \Theta^{m\times n}(\Vert\cdot\Vert_1,\Vert\cdot\Vert_2,\Omega)=\limsup_{t\to\infty} t^{n/m}\psi_{\Omega}^{m\times n}(\Vert \cdot \Vert_{1},\Vert \cdot \Vert_{2},t)=
     \limsup_{t\to\infty} t^{n/m}\psi_{V}^{m\times 2}(\Vert \cdot \Vert_{1}^{\prime},\Vert \cdot \Vert_{2},t)=c\, .
     \]
     Finally, for $m\ne n$ and $(m,n)\ne (1,3)$, we need to show $\tilde{\Theta}^{m\times n}(\Vert\cdot\Vert_1,\Vert\cdot\Vert_2,\Omega)=0$ to conclude.
     As otherwise part (b) proved independently below gives even stronger bounds, we may assume $n/m\ge 1$, so by our assumption $m\ne n$ in fact $n/m>1$. Thus,
     we may apply non-trivial estimates relating uniform versus ordinary exponents of approximation
    for a system of linear forms in two variables obtained in~\cite{nikmo} to $V$, to see
     that for some explicitly computable $\alpha>n/m$
     \[
     \liminf_{t\to\infty} t^{\alpha}\psi_{V}^{m\times 2}(\Vert \cdot \Vert_{1}^{\prime},\Vert \cdot \Vert_{2},t)=0.
     \]
     Thus indeed further
       \[
        \tilde{\Theta}^{m\times n}(\Vert\cdot\Vert_1,\Vert\cdot\Vert_2,\Omega)=\liminf_{t\to\infty} t^{n/m}\psi_{\Omega}^{m\times n}(\Vert \cdot \Vert_{1},\Vert \cdot \Vert_{2},t)\le 
     \liminf_{t\to\infty} t^{n/m}\psi_{V}^{m\times 2}(\Vert \cdot \Vert_{1}^{\prime},\Vert \cdot \Vert_{2},t)=0.
     \]
     The used inequality is rather immediate,
     see Section \ref{st2} for details.
     Hence any such $B$ induces $\Omega\in FS_{m,n}(\Vert.\Vert_1, \Vert.\Vert_2,c)$. Clearly the Hausdorff dimension of $\Omega$ obtained is at least $m(n-2)$, in fact it has positive $m(n-2)$-dimensional Lebesgue measure.
 
 Proof part (b): Similar to \eqref{eq:pno}, by abuse of notation
 let now $\Vert.\Vert_1^{\prime}$ be the restriction of $\Vert.\Vert_1$ to the first coordinate, that is
 \[
 \Vert x_1 \Vert_1^{\prime}:= \Vert (x_1,0,\ldots,0)\Vert_1.
 \]
 Clearly this is a multiple of the absolute value.

 As in the notation in~\cite[Corollary~1]{AginWeiss2024} set $\gamma=\tfrac{n}{m}$ and consider the matrices $M_{m,1}$. Then, since we assume $\tfrac{n}{m}<1$, by \cite[Corollary~1]{AginWeiss2024} we have that for any given $c\ge 0$, there are uncountably many $\ux\in \mathbb{R}^{m\times 1}$ such that
 \[
   \limsup_{t\to\infty} t^{n/m}\psi_{\ux}^{m\times 1}(\Vert \cdot \Vert_{1}^{\prime},\Vert \cdot \Vert_{2},t)=c\, .
     \]
     Now, a variant of Lemma~\ref{lemme}, with $V=\ux\in \mathbb{R}^{m\times 1}$ a column vector (proved analogously, see the
     last part of its proof and the comment below Lemma~\ref{lemme}), combined with the argument in Section~\ref{st2} reads as follows: If $c$ was chosen small enough, then 
     for a positive $m(n-1)$-Lebesgue measure set of matrices $B\in \mathbb{R}^{m\times (n-1)}$,
     the extension of $V=\ux$ to a matrix $\Omega=(V,B)\in \mathbb{R}^{m\times n}$
     via adding $B$ to the right of $\ux$ has the property
        \[
        \Theta^{m\times n}(\Vert\cdot\Vert_1,\Vert\cdot\Vert_2,\Omega)=\limsup_{t\to\infty} t^{n/m}\psi_{\Omega}^{m\times n}(\Vert \cdot \Vert_{1},\Vert \cdot \Vert_{2},t)=
     \limsup_{t\to\infty} t^{n/m}\psi_{\ux}^{m\times 1}(\Vert \cdot \Vert_{1}^{\prime},\Vert \cdot \Vert_{2},t)=c\, .
     \]
     The obtained condition $n\ne m+1$
     is irrelevant since $m>n$. Now notice that we can assume $n
     \ge 2$ (otherwise our statement is trivial) and thus $n/m>1/m$ exceeds the trivial Dirichlet exponent for
     an $m\times 1$ column vector. It then follows from non-trivial estimates relating uniform versus ordinary exponents of approximation for simultaneous approximation (see~\cite{mamo} for optimal estimates, weaker bounds from~\cite{SchmidtSummerer} suffice as well) that
     for some explicit $\beta>n/m$
     \[
     \liminf_{t\to\infty} t^{\beta}\psi_{\ux}^{m\times 1}(\Vert \cdot \Vert_{1}^{\prime},\Vert \cdot \Vert_{2},t)=0
     \]
     and thus similarly as in part (a) further
       \[
        \tilde{\Theta}^{m\times n}(\Vert\cdot\Vert_1,\Vert\cdot\Vert_2,\Omega)=\liminf_{t\to\infty} t^{n/m}\psi_{\Omega}^{m\times n}(\Vert \cdot \Vert_{1},\Vert \cdot \Vert_{2},t)\le 
     \liminf_{t\to\infty} t^{n/m}\psi_{\ux}^{m\times 1}(\Vert \cdot \Vert_{1}^{\prime},\Vert \cdot \Vert_{2},t)=0.
     \]
     Hence again any such $B$ induces $\Omega\in FS_{m,n}(\Vert.\Vert_1, \Vert.\Vert_2,c)$ and we conclude as in part (a). 
 \end{proof} 
 
 Combining Theorem~\ref{AWe} with Lemma \ref{Folklore transpose} and Theorem \ref{arbn} we may infer that the Folklore set is not empty for
 any $(m,n)\ne (1,1)$, and obtain reasonably good metrical bounds, however
 still weaker than \cite[Theorem~3.15]{Dasetal} (for max norms) recalled in Section~\ref{recentadvances}. Without metrical bounds this consequence of Theorem~\ref{AWe} is already contained in~\cite{AginWeiss2024} as well.



\end{document}